\newtheorem{theorem}{Theorem}[section]
\newtheorem{lemma}[theorem]{Lemma}
\newtheorem{proposition}[theorem]{Proposition}
\newtheorem{corollary}[theorem]{Corollary}
\newtheorem*{theorem*}{Theorem}
\theoremstyle{definition}
\newtheorem{question}[theorem]{Question}
\newtheorem{definition}[theorem]{Definition}
\newtheorem{example}[theorem]{Example}
\newtheorem{choice}[theorem]{Choice}
\newtheorem{assumption}[theorem]{Assumption}
\theoremstyle{remark}
\newtheorem{remark}[theorem]{Remark}
\newcommand{\euO}{\mathfrak O}
\newcommand{\euP}{\mathfrak P}
\newcommand{\euD}{\mathfrak D}
\newcommand{\euA}{\mathfrak A}
\newcommand{\euH}{\mathfrak H}
\newcommand{\euc}{\mathfrak c}
\newcommand{\eub}{\mathfrak b}
\newcommand{\eua}{\mathfrak a}
\newcommand{\Z}{\mathbb{Z}}
\newcommand{\bQ}{\mathbb Q}
\newcommand{\bZ}{\mathbb Z}
\newcommand{\bF}{\mathbb F}
\newcommand{\Spn}{\mathbb{S}_{p^n}}
\newcommand{\Gal}{\mathrm{Gal}}
\newcommand{\Tr}{\mathrm{Tr}}
\newcommand{\chr}{\mathrm{char}}
\begin{document}
\title[Galois Scaffold]{Sufficient conditions for large Galois scaffolds}

\author{Nigel P.~Byott}
\address{Department of Mathematics, University of Exeter, Exeter 
EX4 4QF U.K.}  \email{N.P.Byott@exeter.ac.uk}

\author{G.~Griffith Elder}
\address{Department of Mathematics, University of Nebraska at Omaha, Omaha NE 68182-0243 U.S.A.}  
\email{elder@unomaha.edu} 

\subjclass[2010]{Primary 11S15, Secondary 11R33, 16T05}
\keywords{Galois module structure, Hopf order}
\date{\today}

\bibliographystyle{amsalpha}

\begin{abstract}
Let $L/K$ be a finite, Galois, totally ramified $p$-extension of
complete local fields with perfect residue fields of characteristic
$p>0$.  In this paper, we give conditions, valid for any Galois
$p$-group $G=\mbox{Gal}(L/K)$ (abelian or not) and for $K$ of either
possible characteristic ($0$ or $p$), that are sufficient for the
existence of a Galois scaffold.  The existence of a Galois scaffold
makes it possible to address questions of integral Galois module
structure, which is done in a separate paper \cite{byott:A-scaffold}.
But since our conditions can be difficult to check, we specialize to
elementary abelian extensions and extend the main result of
\cite{elder:scaffold} from characteristic $p$ to characteristic $0$.
This result is then 
applied, using a result of Bondarko, to the construction
of new Hopf orders over the valuation ring $\euO_K$ that lie in $K[G]$ for
$G$ an elementary abelian $p$-group.
\end{abstract}

\maketitle

\section{Introduction}
Let $p$ be prime, $\kappa$ be a perfect field of characteristic $p$,
and $K$ be a local field with residue field $\kappa$. Let $L$ be a
totally ramified Galois extension of $K$ with $G=\mbox{Gal}(L/K)$ of
degree $p^n$ for some $n>0$, and let $\euO_L$ be the ring of integers
of $L$ ({\em i.e.} its valuation ring). {\em Local integral Galois module
  theory} asks a question that is a consequence of three classical
results: the Normal Basis Theorem, which states that $L$ {\em is free}
over the group algebra $K[G]$; a result of E.~Noether \cite{noether},
which concludes that, because $L/K$ is wildly ramified, $\euO_L$ {\em
  is not free} over the group ring $\euO_K[G]$; and a local version of a
result of H.~W.~Leopoldt \cite{leopoldt}, which states that for
absolute abelian extensions of the $p$-adic numbers ({\em i.e.}
$K=\mathbb{Q}_p$), $\euO_L$ {\em is free} over its associated order
$$\euA_{L/K}=\{\alpha\in K[G]:\alpha\euO_L\subseteq \euO_L\},$$ the
largest $\euO_K$-order in the group algebra $K[G]$ for which $\euO_L$
is a module.  
\begin{question}\label{ques-GMT}
When is the ring of integers $\euO_L$ free over its associated
order $\euA_{L/K}$?
\end{question}

Restrict for the moment to the situation where $K$ is a finite
extension of $\mathbb{Q}_p$.  The earliest answers here showed us that
unless $K=\mathbb{Q}_p$, $\euO_L$ need not be free over $\euA_{L/K}$,
which is why the question is currently asked in this
way. Additionally, those early answers suggested a form that we might
expect the answers to take. Based upon work of F.~Bertrandias and
M.-J.~Ferton \cite{ferton} when $L/K$ is a $C_p$-extension, and
B.~Martel \cite{martel} when $L/K$ is a $C_2\times C_2$-extension, we
might expect the answer to Question \ref{ques-GMT}, necessary and
sufficient conditions for $\euO_L$ to be free over $\euA_{L/K}$, to be
expressed in terms of the ramification numbers associated with the
extension (integers $i$ such that $G_i\neq G_{i+1}$ where $G_i$ is the
$i$th ramification group \cite[IV \S1]{serre:local}). There have not
been that many further results in this direction. Still,
\begin{enumerate}
\item When $L/K$ is an abelian extension, and the ring of integers is
  replaced with the inverse different $\euD_{L/K}^{-1}$, \cite[Theorem
    3.10]{byott:JNTB} determines necessary conditions, in terms of
  ramification numbers, for the inverse different to be free over its
  associated order.
\item When $K/\bQ_p$ is unramified and $L/K$ is a totally ramified
  abelian extension (not necessarily of $p$-power degree), D.~Burns
  \cite{burns:AIF} investigated freeness of ideals in $\euO_L$ over
  their associated orders in $K[G]$. This was extended in
  \cite{burns:CMH} to the case where $K/\bQ_p$ can be ramified, but
  associated orders are considered in $\bQ_p[G]$ (or, more generally,
  in $E[G]$, where $E \subseteq K$ and $E/\bQ_p$ is unramified). In
  both these situations, the existence of {\em any} ideal free over
  its associated order forces strong restrictions on the ramification
  of the extension $L/K$.
\item When $L/K$ is a special type of cyclic Kummer extension, namely
  $L=K(\sqrt[p^n]{1+\beta})$ for some $\beta\in K$ with $p\nmid
  v_K(\beta)>0$, where $v_K$ is the normalized valuation on $K$, Y.~Miyata determines necessary and sufficient
  conditions for $\euO_L$ to be free over $\euA_{L/K}$ in terms of
  $v_K(\beta)$. These conditions can be restated
  in terms of ramification numbers  \cite{miyata:cyclic2}.
\item Finally, we move into characteristic $p$ with
  $K=\kappa((t))$. When $L/K$ is a special type of elementary abelian
  extension, namely {\em near one-dimensional}, and thus has a {\em Galois
    scaffold} \cite{elder:scaffold}, necessary and sufficient
  conditions for $\euO_L$ to be free over $\euA_{L/K}$ are given
  in terms of ramification numbers \cite{byott:scaffold}.
\end{enumerate}
Interestingly, the conditions on the ramification numbers in
\cite{byott:scaffold} agree with those given in
\cite{miyata:cyclic2} (as
translated by \cite{byott:QJM}).

The purpose of this paper is to extend the setting where Galois
scaffolds have been proven to exist, namely
\cite{elder:scaffold,elder:sharp-crit}: from characteristic $p$ to
characteristic $0$, and from elementary abelian (or cyclic of degree
$p^2$) $p$-groups to all $p$-groups (abelian or not).
We do this, in Theorem \ref{main}, by determining conditions
sufficient for a Galois scaffold to exist that are independent of
characteristic and of Galois group.  When an extension $L/K$ satisfies
the hypotheses of Theorem \ref{main} and thus possesses a Galois
scaffold, the answer to Question \ref{ques-GMT} is provided in
\cite{byott:A-scaffold}, where necessary and sufficient conditions are
given, not just for $\euO_L$, but for each fractional ideal $\euP_L^i$
of $\euO_L$, to be free over its associated
order. Indeed, stronger questions, such as those asked by B.~de Smit
and L.~Thomas in \cite{desmit:2}, are also addressed.  Each answer is
given in terms of ramification numbers.

On the other hand, given only the generators
of an extension,  it is not easy to determine whether the extension
satisfies the conditions of 
Theorem \ref{main}. Thus in \S3, we describe, in terms of 
Artin-Schreier generators, arbitrarily large elementary abelian
$p$-extensions that do satisfy the conditions of
Theorem \ref{main} and thus possess a Galois scaffold.  
In characteristic $0$, the result is new. These are the
analogs of the near one-dimensional elementary abelian extensions of
\cite{elder:scaffold}. In \S4, to illustrate the level of explicit detail that
is then possible when the results of this paper are combined with
\cite{byott:A-scaffold}, we include results in characteristic $0$, on
the structure of 
$\euP_L^i$ over its associated order, for certain families of
elementary abelian extensions that are of common interest.

Finally, to illustrate the utility of our results beyond local
integral Galois module theory, we explain how the results of this
paper combined with \cite{bondarko,byott:A-scaffold} can be used to
attack the difficult problem of classifying Hopf orders in the group
algebra $K[G]$ for $G$ some $p$-group. This is an old problem. The
first result in this direction is that of Tate and Oort
\cite{tate:oort} for Hopf orders of rank $p$. And yet, the
classifications for $G\cong 
C_p^3,C_{p^3}$ remain incomplete \cite[Proposition
  15]{childs:elem-abel}, \cite[Theorem 5.4]{underwood:childs}.
Notably, the Hopf orders that are missing for $G\cong C_p^3$ include
those which are realizable as the associated orders of valuation
rings, and it is precisely such Hopf orders that the
results of this paper are designed to produce.  Indeed, \S5 can be
viewed as providing a model, given any $p$-group $G$, for the
construction of such ``realizable'' Hopf orders in $K[G]$. As such, it
provides motivation for future work identifying extensions
that satisfy the hypotheses of Theorem \ref{main}.

We close this introduction by pointing out that our work is somewhat
similar in spirit to that of Bondarko \cite{bondarko,bondarko:ideals,
  bondarko:leo}, who also considers the existence of ideals free over
their associated orders in the context of totally ramified Galois
extensions of $p$-power degree. Bondarko introduces the class of
semistable extensions. Any such extension contains at least one ideal
free over its associated order, and all such ideals can be determined
from numerical data. Moreover, any abelian extension containing an
ideal free over its associated order, and satisfying certain
additional assumptions, must be semistable. Abelian semistable
extensions can be completely characterized in terms of the Kummer
theory of (one-dimensional) formal groups. The precise relationship
between Bondarko's results and our own remains to be explored.

\subsection{Discussion of our approach} \label{background}
The existence of a Galois scaffold addresses an issue, which
is illustrated in the following two
examples. Let $v_K,v_L$ denote the normalized valuations for $K,L$,
respectively. Choose $\pi\in K$ with $v_K(\pi)=1$.

\begin{example} \label{ex1}
Fix a local field $K$ and suppose that $L/K$ is
a totally ramified Galois extension of degree $p$. Let $\sigma$ generate $G$.
Then $L/K$ has a unique ramification
break $b$, and this is characterized by the property that, for all $\alpha \in
L\backslash\{0\}$,
$$ v_L( (\sigma-1) \cdot \alpha) \geq v_L(\alpha) + b, \mbox{ with
  equality if } p \nmid v_L(\alpha). $$ 
Let us suppose for simplicity that $b \equiv -1 \bmod p$, say $b=pr-1$
with $r \geq 1$. Fix a uniformizing parameter $\pi$ of $K$, and let
$\Psi=(\sigma-1)/\pi^r \in K[G]$. Pick any $\rho \in L$ with
$v_L(\rho)=p-1$. Then, for $0 \leq j \leq p-1$, we have $v_L(\Psi^j
\cdot \rho)=p-1-j$. Thus $\Psi$ typically reduces valuations by $1$,
and the $\Psi^j \cdot \rho$ for $0 \leq j \leq p-1$ form an
$\euO_K$-basis of $\euO_L$. Two conclusions follow: firstly, that the
$\Psi^j$ form an $\euO_K$-basis of the associated order $\euA_{L/K}$,
and, secondly, that $\euO_L$ is a free module over $\euA_{L/K}$,
generated by any element $\rho$ of valuation $p-1$. 
\end{example}

Example \ref{ex1} in itself is nothing
new. Indeed, far more comprehensive treatments of the valuation ring
of an extension of degree $p$ are given in \cite{ferton, bbferton} for
the characteristic $0$ case, and in \cite{aiba,desmit:2} for
characteristic $p$. (See also \cite{ferton:ideals} for arbitrary
ideals in characteristic $0$, and \cite{huynh} and \cite{Maria} for
the corresponding 
problem in characteristic $p$.) We now consider what happens if we try
to make the same argument for a larger extension.

\begin{example} \label{ex2}
Let $L/K$ be a totally ramified extension of degree $p^2$. We now
have two ramification breaks $b_1 \leq b_2$ (in the lower numbering),
and we necessarily have $b_1 \equiv b_2 \bmod p$. For simplicity we assume that $b_i \equiv -1 \bmod p^2$, say 
$b_i=r_ip^2-1$, for $i=1$, $2$. We can
then find elements $\sigma_1$, $\sigma_2$ which generate $\Gal(L/K)$
and for which, setting $\Psi'_i=(\sigma_i-1)/\pi^{r_i}$, we have
$$ v_L(\Psi'_i \cdot \alpha) \geq v_L(\alpha)-1 \mbox{ for } i=1, 2,
\mbox{ with equality if }
 p \nmid v_L(\alpha) $$
whenever $\alpha \in L\backslash\{0\}$. Thus $\Psi'_1$ and $\Psi'_2$ both typically reduce valuations by
$1$, but this does not enable us to determine
$\euA_{L/K}$. Now suppose that we could
replace $\Psi'_1$,  $\Psi'_2$ with elements $\Psi_1$,  $\Psi_2$ such
that, for some suitable choice of $\rho \in L$ with $v_L(\rho)=p^2-1$, we
had
\begin{equation} \label{int-ex}
   v_L(\Psi_1^{j_1} \Psi_2^{j_2} \cdot \rho)= p^2-1 -j_1 p -j_2 \mbox{ for
  all } 0 \leq j_1, j_2, \leq p-1. 
\end{equation}
Thus, at least on the family of elements of $L$ of the form
$\Psi_i^{i_1} \Psi_2^{i_2} \cdot \rho$, we can say that $\Psi_1$
typically reduces valuations by $p$, whilst $\Psi_2$ typically reduces
valuations by $1$.  We could then deduce that the elements
$\Psi_1^{j_1} \Psi_2^{j_2}$ form an $\euO_K$-basis of $\euA_{L/K}$,
and that $\euO_L$ is free over $\euA_{L/K}$ on the generator
$\rho$. Such elements $\Psi_i$  would essentially
constitute a Galois scaffold. 
\end{example}

The reason that we cannot determine $\euA_{L/K}$ in Example \ref{ex2}
using the original elements $\Psi'_1$ and $\Psi'_2$ is that we have
insufficient information about their effect on elements of $L$ whose
valuation is divisible by $p$ but not by $p^2$. It is because of this
problem that early attempts to treat other cases in the same manner as
degree $p$ extensions achieved only limited success. (See for instance
\cite{ferton:cyclic} for cyclic extensions of degree $p^n$, $n \geq
2$, and, temporarily relaxing the condition that $L/K$ has $p$-power
degree, \cite{ferton:dihedral, berge:dihedral} for dihedral extensions
of degree $2p$. A complete treatment of biquadratic extensions of
$2$-adic fields was, however, given in \cite{martel}.)

\subsection{Intuition of a scaffold}\label{intuition}
The intuition underlying a scaffold can be explained, as is done in
\cite{byott:A-scaffold}, somewhat informally.  For the convenience of
the reader, we replicate it here: Given any positive integers $b_i$
for $1\leq i\leq n$ such that $p\nmid b_i$ (think of lower
ramification numbers), there are elements $X_i\in L$ such that
$v_L(X_i)=-p^{n-i}b_i$.  Since the valuations, $v_L$, of the monomials
$$\mathbb{X}^a=X_n^{a_{(0)}}X_{n-1}^{a_{(1)}}\cdots X_1^{a_{(n-1)}}:
0\leq a_{(i)}<p,$$ provide a complete set of residues modulo $p^n$ and
$L/K$ is totally ramified of degree $p^n$, these monomials provide a
convenient $K$-basis for $L$.  The action of the group ring $K[G]$ on $L$ is clearly
determined by its action on the monomials $\mathbb{X}^a$.   So if there were
$\Psi_i\in K[G]$ for $1\leq i\leq n$ such that each $\Psi_i$
acts on the monomial basis element $\mathbb{X}^a$ of $L$ as if
it were the differential operator $d/dX_i$ and the $X_i$ were
independent variables, namely
\begin{equation} \label{deriv-exact}
  \Psi_i\mathbb{X}^a=a_{(n-i)}\mathbb{X}^a/X_i, 
\end{equation}
then the monomials in the $\Psi_i$ (with exponents bound $< p$) would
furnish a convenient basis for $K[G]$ whose effect on the
$\mathbb{X}^a$ would be easy to determine. As a consequence, the
determination of $\euA_{L/K}$, and of the structure of $\euO_L$ over
$\euA_{L/K}$ would be reduced to a purely numerical calculation
involving the $b_i$. This remains true if \eqref{deriv-exact} is
loosened to the congruence,
\begin{equation}  \label{deriv-cong} 
   \Psi_i\mathbb{X}^a\equiv a_{(n-i)}\mathbb{X}^a/X_i\bmod
   (\mathbb{X}^a/X_i) \euP_L^{\euc} 
\end{equation}
for a sufficiently large ``precision'' $\euc$.  The $\Psi_i$, together
with the $\mathbb{X}^a$, constitute a Galois scaffold on $L$.

The formal definition of a scaffold \cite[Definition
  2.3]{byott:A-scaffold} generalizes this situation.  Indeed, given
this intuitive connection with differentiation, it is perhaps not
surprising that scaffolds can be constructed from higher derivations
on an inseparable extension, as is done in
\cite[\S5]{byott:A-scaffold}. Ironically, with this perspective it may
now be surprising that they can be constructed for Galois extensions
under the action of $K[G]$. Yet, this is where they were first
constructed \cite{elder:scaffold}.

\section{Main Result: Construction of Galois scaffold} \label{suf-sect} 

Recall that $K$ is a complete local field whose residue field is
perfect of characteristic $p>0$, and that $L/K$ is a totally ramified
Galois extension of degree $p^n$.  Relabel now, so that $L/K=K_n/K_0$.
Following common practice, we use subscripts to denote field of
reference. So $v_n\colon K_n \twoheadrightarrow \mathbb{Z}
\cup\{\infty\}$ is the normalized valuation, and $\pi_n$ is a prime
element of $K_n$ with $v_n(\pi_n)=1$. The valuation ring of $K_n$ is
denoted by $\euO_n$ with maximal ideal $\euP_n$.  Let $G_i=\{\sigma\in
G:v_n((\sigma-1)\pi_n)\geq i+1\}$ be the $i$th group in the
ramification filtration of the Galois group $G=\mbox{Gal}(K_n/K_0)$.

In this section we construct a Galois scaffold, in Theorem \ref{main},
for extensions $K_n/K_0$ that satisfy three assumptions, which in turn
depend upon two choices.  For emphasis, we repeat here that $K_0$ may
have characteristic $0$ or $p$. The Galois group $G$ can be
nonabelian, as well as abelian.  We also point out that, except
for Assumption \ref{ass3}, all these choices and assumptions appear in
\cite{elder:scaffold}. Our first choice organizes the extension.
\begin{choice}\label{choice1}
Choose a composition series for $G$ that refines the ramification
filtration: $\{H_i\}\supseteq \{G_i\}$ such that $H_0=G$,
$H_n=\{e\}$ and $H_{i-1}/H_i\cong C_p$. Furthermore, choose one
element to represent each degree $p$ quotient: $\sigma_i\in
H_{i-1}\setminus H_i$.
\end{choice}
Let $K_i=K_n^{H_i}$ be the fixed field of $H_i$, and let
$b_i=v_n((\sigma_{i}-1)\pi_n)-1$.  Because of Choice \ref{choice1}, we
can see, using \cite[IV\S1]{serre:local}, that the multiset
$B=\{b_i:1\leq i\leq n\}$ is the set of lower ramification numbers,
namely the set of subscripts $i$ with $G_i\supsetneq G_{i+1}$, with
multiplicity $\log_p|G_{b_i}/G_{b_i+1}|$.  In particular, $b_1\leq
b_2\leq \cdots \leq b_n$, $\{b_i:j<i\leq n\}$ is the ramification
multiset for $K_n/K_j$, $\{b_i:0<i\leq j\}$ is the ramification
multiset for $K_j/K_0$, and $b_j$ is the lower ramification number for
$K_j/K_{j-1}$.  The set of upper ramification numbers $\{u_i\}$ is
determined by
\begin{equation}\label{upper}
u_i=b_1+\frac{b_2-b_1}{p}+\cdots +\frac{b_i-b_{i-1}}{p^{i-1}}
=(p-1)\left (\frac{b_1}{p}+\cdots +\frac{b_{i}}{p^{i}}\right
)+\frac{b_i}{p^{i}}\end{equation} 
\cite[IV\S3]{serre:local}.
Furthermore note that $\{u_i:0<i\leq j\}$ is the set of
upper ramification numbers for $K_j/K_0$, but that $\{u_i:j<i\leq n\}$
is not necessarily the set of upper ramification numbers for $K_n/K_j$.

Our first assumption is weak, as it does not eliminate any extension
in characteristic $p$. In characteristic $0$, it eliminates only maximally ramified extensions, {\em i.e.} those
cyclic extensions $K_n/K_0$ where $K_0$ contains the $p$th roots of
unity and $K_1=K_0(\sqrt[p]{\pi_0})$ for some prime element $\pi_0\in
K_0$ \cite[IV\S2 Exercise 3]{serre:local}.
\begin{assumption}\label{ass1}
$p\nmid b_1$.
\end{assumption}

Now we choose generators for $K_n/K_0$ based upon Choice
\ref{choice1}.  Since the valuation $v_j$ is normalized so that
$v_j(K_j^\times)=\mathbb{Z}$, there are $Y_j\in K_j$ with
$v_j(Y_j)=-b_j$. Since $v_j((\sigma_j-1)Y_j)=0$, a unit $u\in K_0$
exists such that $v_j( \sigma_j-1)u^{-1}Y_j - 1)>0$.

\begin{choice}\label{choice2} For each $1\leq j\leq n$, choose
$X_j\in K_j$ such that $v_j(X_j)=-b_j$ and
$v_j((\sigma_j-1)X_j-1)>0$.
\end{choice}
Since $b_j\equiv b_1\bmod p$ \cite[IV\S2]{serre:local}, we have $p\nmid
b_j$ and therefore $K_j=K_{j-1}(X_j)$.  
\begin{remark}
Since $p\nmid b_j$, we could choose $X_j$ so that, additionally, it
satisfies an Artin-Schreier equation $X_j^p-X_j\in K_{j-1}$ \cite[III
  \S2 Proposition 2.4]{fesenko}. In characteristic $0$, this is a
result of MacKenzie and Whaples. We do not make this a requirement
however, since we do not need to use this fact.
\end{remark}

Define the
binomial coefficient
$$\binom{Y}{i}=\frac{Y(Y-1)\cdots(Y-i+1)}{i!}\in \mathbb{Q}[Y]$$ for $i\geq 0$,
and $\binom{Y}{i}=0$ for $i<0$.
For integers $-p<v_{(i)}<p$ form the $n$-tuple,
$\vec{v}=(v_{(n-1)},\ldots ,v_{(0)})$. Define
$\rho_{\vec{v}}=\prod_{i=1}^n\binom{X_i}{v_{(n-i)}}\in K_n$.
Thus $\rho_{\vec{v}}=0$, if there is an $0\leq i<n$ with $v_{(i)}<0$.
Define the partial order $\preceq$ on 
$n$-tuples: Given $\vec{v}$, $\vec{w}$,
$$\vec{v}\preceq \vec{w}\iff v_{(i)}\leq w_{(i)}\mbox{ for all } 0\leq
i<n.$$ Thus $\rho_{\vec{v}}\neq 0$ if and only if $\vec{0}\preceq
\vec{v}$.  Now restrict to vectors $(a_{(n-1)},\ldots ,a_{(0)})$ of
the base-$p$ coefficients of integers $0\leq a<p^n$, and identify each
$a= \sum_{i=1}^na_{(n-i)}p^{n-i}$ where $0\leq a_{(s)}<p$ with its corresponding
vector.  (It is convenient to index the base-$p$ digits as
$a_{(n-i)}$, where increasing values of $i$ correspond to decreasing
powers of $p$.)  Define
$$\rho_{a}=\prod_{i=1}^n\binom{X_i}{a_{(n-i)}}\in K_n.$$ 
Furthermore, define
$$ \eub(a) : = - v_n(\rho_{a}) = -\sum_{i=1}^na_{(n-i)}p^{n-i}b_i. $$
Because the $b_i$ are relatively
prime to $p$, $\{-\eub(a):0\leq a<p^n\}$ is a complete set of residues
modulo $p^n$. As a result, $\{\rho_{a}:0\leq a<p^n\}$ is a $K_0$-basis
for $K_n$.  Since $-\eub$ maps the residues modulo $p^n$ onto the
residues modulo $p^n$, it has an inverse $\eua$: For each $t\in
\mathbb{Z}$, we define $\eua(t)$ to be the unique integer satisfying
$$ 0\leq \eua(t)<p^n, \qquad t=-\eub(\eua(t))+p^nf_t 
   \mbox{ for some } f_t\in \mathbb{Z}. $$  
Note that $\eua(0)=0$. Using this notation, we normalize our
$K_0$-basis for $K_n$ as follows.
\begin{definition}\label{lambda}
Let $\lambda_t=\pi_0^{f_t}\rho_{\eua(t)}$, where $\pi_0$ is a fixed
prime element in $K_0$. Thus $v_n(\lambda_t)=t$ for all $t$,
$\lambda_{t+p^n}=\pi_0\lambda_t$, and $\{\lambda_t:0\leq t<p^n\}$ is
an $\euO_0$-basis for $\euO_n$.
\end{definition}

We need to discuss Galois action. Choice
\ref{choice2} means that
$v_j((\sigma_i-1)X_j)=b_i-b_j$ for $1\leq i\leq j\leq n$.
Recall that $b_i- b_j\equiv 0\bmod p$ \cite[IV\S2]{serre:local}.
Since $K_j/K_{j-1}$ is ramified, there are elements $\mu_{i,j}\in K_{j-1}$
and $\epsilon_{i,j}\in K_j$
such that 
\begin{equation}\label{sigmaX}
(\sigma_i-1)X_j=\mu_{i,j}+\epsilon_{i,j}
\end{equation}
with $v_j(\mu_{i,j})=b_i-b_j<v_j(\epsilon_{i,j})$.  We consider
$\mu_{i,j}$ to be the {\em main term}, with $\epsilon_{i,j}$ the {\em
  error term}. Observe that
$v_j([(\sigma_i-1)-\mu_{i,j}(\sigma_j-1)]X_j)>v_j((\sigma_i-1)X_j)$.
We would like 
this observation to be a statement about an element
$\mu_{i,j}(\sigma_j-1)\in K_0[G]$
that approximates the effect of $(\sigma_i-1)$. So
observe that
if $p^j\mid v_j(\mu_{i,j})=b_i-b_j$, then we may choose 
$\mu_{i,j}\in
K_0$. The condition
$p^j\mid b_i-b_j$ for all $1\leq i\leq j\leq
n$
is equivalent to 
\begin{assumption}\label{ass2}
There is one residue class modulo $p^n$, represented by
$b\in\mathbb{Z}$ with $0\leq b<p^n$, such that
$b_i\equiv b\bmod p^n$ for
$1\leq i\leq n$.
\end{assumption}
Under this assumption $\eub(a)\equiv ab\bmod p^n$. Furthermore,
$\eua(t)\equiv -b^{-1}t\bmod p^n$.  Restated in terms of upper
ramification numbers, Assumption \ref{ass2} becomes $u_{i+1}\equiv
u_i\bmod p^{n-i}$ for $1\leq i<n$. Since $u_1=b_1\in \mathbb{Z}$, this
implies the conclusion of the Theorem of Hasse-Arf, namely that the
upper ramification numbers are integers. But Assumption \ref{ass2} is
stronger than the conclusion of Hasse-Arf, since it
implies that the upper ramification numbers are integers 
congruent modulo $p$.

Define {\em truncated exponentiation} by
$$X^{[Y]}=\sum_{i=0}^{p-1}\binom{Y}{i}(X-1)^i\in\mathbb{Z}_{(p)}[X,Y]$$
where $\mathbb{Z}_{(p)}$ is the integers localized at $p$.
Motivated by \cite{elder:scaffold}, we define:
\begin{definition} \label{Theta}
Let $\Psi_i=\Theta_i-1$ where
$\Theta_n=\sigma_n$,
and for $1\leq i\leq n-1$,
$$\Theta_i=\sigma_i\Theta_n^{[-\mu_{i,n}]}\Theta_{n-1}^{[-\mu_{i,n-1}]}\cdots
\Theta_{i+1}^{[-\mu_{i,i+1}]}.$$ 
\end{definition}

\begin{remark}
If $K_0$ has characteristic $p$ and $K_n/K_0$ is elementary abelian,
it was observed in \cite{elder:scaffold} that the elements in
Definition \ref{Theta} solve the matrix equation:
$$\begin{pmatrix}
\mu_{1,1} & \mu_{1,2} &\cdots & \mu_{1,n} \\ 
0 & \mu_{2,2} &\cdots & \mu_{2,n} \\ 
& & \ddots &  &\\ 
0 &\cdots &0 & \mu_{n,n}
\end{pmatrix}\cdot \begin{pmatrix}
\Theta_1\\\Theta_2\\\vdots \\\Theta_n\end{pmatrix}=\begin{pmatrix}
\sigma_1\\\sigma_2\\\vdots \\\sigma_n\end{pmatrix}$$ where the usual
vector space operations of addition and scalar multiplication have
been replaced by multiplication and scalar truncated exponentiation,
respectively.  Note $\alpha^p=0$ for all $\alpha$ in the augmentation ideal
$(\sigma-1:\sigma\in G)\subset K_0[G]$.  So, since $\Theta_j-1\in(\sigma-1:\sigma\in G)$ satisfies $(\Theta_j-1)^p=0$,
we find $\Theta_j^{[-\mu_{i,j}]}\cdot \Theta_j^{[\mu_{i,j}]}=1$.  A
cautionary remark is important here: Since scalar truncated
exponentiation does not distribute (it is easy to check for $p=2$ that
the units $(\Theta_i\Theta_j)^{[\mu]}$ and $\Theta_i^{[\mu]}
\Theta_j^{[\mu]}$ are not equal), applying the inverse matrix
$(\mu_{i,j})^{-1}$ to both sides of this equation does not preserve
equality.\end{remark}

The following assumption will enable us to 
ignore the error terms in \eqref{sigmaX}.
\begin{assumption}\label{ass3} Given an integer $\euc\geq 1$, assume that
for $1\leq i\leq j\leq n$,
$$v_n(\epsilon_{i,j})-
v_n(\mu_{i,j})\geq 
(p-1)\sum_{k=1}^{i-1}p^{n-k-1}b_k+(p^{n-i}-p^{n-j})b_i+\euc,$$
which because of (\ref{upper}), is equivalent to $v_n(\epsilon_{i,j})-
v_n(\mu_{i,j})\geq p^{n-1}u_i-p^{n-j}b_i+\euc$.

\end{assumption}

We state the main result of this paper.

\begin{theorem}\label{main}
Let $K_n/K_0$ be a totally ramified Galois $p$-extension with
ramification multiset $\{b_i:1\leq i\leq n\}$ satisfying Assumptions
\ref{ass1} and \ref{ass2}. Thus there is one congruence class modulo
$p^n$, represented by $0\leq b<p^n$, that contains all the
ramification numbers.  Given an integer $\euc\geq 1$, assume that it
is possible to make Choices \ref{choice1} and \ref{choice2} so that
Assumption \ref{ass3} holds.  Then a $K_0[G]$-scaffold on $K_n$, as
defined in \cite[Definition 2.3]{byott:A-scaffold}, exists with
precision $\euc$ and shift parameters $b_1, \ldots, b_n$. Namely,
there are:
\begin{enumerate}
\item[(i)] $\lambda_t\in K_n$ defined in Definition
\ref{lambda}  satisfying $v_n(\lambda_i)=i$ and
$\lambda_{i+p^n}=\pi_0\lambda_i$ for some fixed prime element
$\pi_0\in K_0$. 
\item[(ii)] $\Psi_i\in K_0[G]$ defined in Definition \ref{Theta},
satisfying $\Psi_i1=0$, such that for all $1\leq i\leq n$ and $j\in
\mathbb{Z}$, modulo $\lambda_{j+p^{n-i}b_i}\euP_n^{\euc}$,
$$ \Psi_i \lambda_j \equiv \begin{cases} \lambda_{j+p^{n-i}b_i}&
  \mbox{if } \eua(j)_{(n-i)} \geq 1, \\ 0 & \mbox{if }\eua(j)_{(n-i)}
  =0,\end{cases} $$
where
$\eua$ is the function defined on the integers by $\eua(j)\equiv
-jb^{-1}\bmod p^n$ and $0\leq \eua(j)<p^n$, and $\eua(j)_{(n-i)}$ is
the coefficient of $p^{n-i}$ in the base $p$ expansion of $\eua(j)$.
\end{enumerate}
\end{theorem}

Before we prove this theorem, some discussion of the elements
$\Psi_j\in K_0[G]$ is warranted.  Suppose $\psi_j \in K_0[G]$
satisfies $\mbox{Tr}_{n,j}\psi_j=(\sigma_j-1)\mbox{Tr}_{n,j}$ where
$\mbox{Tr}_{n,j}=\sum_{\sigma\in H_{j}}\sigma$ is the element of
$K[G]$ that gives the trace from $K_n$ to $K_{j}$. In this case, we
will say that $\psi_j$ is a {\em lift} of $(\sigma_j-1)$. Thus
$\Psi_j$ can be considered to be one among many lifts of
$(\sigma_j-1)$.  Now observe that
$v_j((\sigma_j-1)\alpha)-v_j(\alpha)=b_j$ for $\alpha\in K_j$ with
$p\nmid v_j(\alpha)$ and thus
$v_n((\sigma_j-1)\alpha)-v_n(\alpha)=p^{n-j}b_j$.  The following
result states that $p^{n-j}b_j$ is a natural upper bound on
$v_n(\psi_j\alpha)-v_n(\alpha)$ for a lift $\psi_j$ of
$(\sigma_j-1)$. From this perspective, Theorem \ref{main} states that
the lifts $\Psi_j$, provided by Definition \ref{Theta}, are special in
that they achieve a natural upper bound.

\begin{proposition}\label{up-bound}
Let $K_n/K_0$ be a totally ramified Galois $p$-extension
satisfying Assumptions \ref{ass1} and \ref{ass2}.
Let $1 \leq j \leq n$ and let $\psi_j$ be any element of $K_0[G]$ such that
$\mbox{\rm Tr}_{n,j}\psi_j=(\sigma_j-1)\mbox{\rm Tr}_{n,j}$. If 
$\rho\in K_n$ with $v_n(\rho)\equiv b_n\bmod p^{n-j}$ but
$v_n(\rho)\not\equiv b_n(1-p^{n-j})\bmod p^{n-j+1}$ (which is equivalent to
$p\nmid v_j(\mbox{\rm Tr}_{n,j}\rho)$), then
$$v_n(\psi_j\rho)\leq v_n(\rho)+p^{n-j}b_j.$$
\end{proposition}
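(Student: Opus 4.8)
The plan is to reduce everything to a computation in $K_j$ using the trace, exploiting the defining property $\operatorname{Tr}_{n,j}\psi_j = (\sigma_j-1)\operatorname{Tr}_{n,j}$. First I would record the elementary fact that for a totally ramified extension $K_n/K_j$ of degree $p^j$ whose ramification multiset is $\{b_i : j < i \le n\}$, the trace map satisfies $v_n(\operatorname{Tr}_{n,j}\alpha) \geq$ a bound that is an equality exactly when $v_n(\alpha)$ avoids the ``bad'' residue classes mod $p^{n-j}$; more precisely, $v_j(\operatorname{Tr}_{n,j}\alpha) = $ (roughly) $\lceil (v_n(\alpha) + \mathfrak d_{n,j})/p^{n-j}\rceil$ where $\mathfrak d_{n,j} = v_n(\euD_{K_n/K_j})$ is the valuation of the different, with equality in the generic case. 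The hypothesis on $v_n(\rho)$ — that $v_n(\rho)\equiv b_n \bmod p^{n-j}$ but $v_n(\rho)\not\equiv b_n(1-p^{n-j})\bmod p^{n-j+1}$ — is precisely engineered (as the parenthetical in the statement asserts) to guarantee $p\nmid v_j(\operatorname{Tr}_{n,j}\rho)$, so I would verify this translation first, computing $\mathfrak d_{n,j}$ from the $b_i$ with $j<i\le n$ via the conductor-discriminant / Serre formula $\mathfrak d_{n,j} = \sum_{i>j}(p^{n-i}(p-1) + \text{lower order})$ appropriately, and checking the congruence bookkeeping.

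Next, the main estimate: since $\operatorname{Tr}_{n,j}$ is $K_j$-linear and $\operatorname{Tr}_{n,j}\psi_j\rho = (\sigma_j-1)\operatorname{Tr}_{n,j}\rho$, and since $\operatorname{Tr}_{n,j}\rho \in K_j$ has $p\nmid v_j(\operatorname{Tr}_{n,j}\rho)$, the degree-$p$ extension fact from Example \ref{ex1} (valid in both characteristics) gives $v_j((\sigma_j-1)\operatorname{Tr}_{n,j}\rho) = v_j(\operatorname{Tr}_{n,j}\rho) + b_j$ — here I use that $b_j$ is the ramification break of $K_j/K_{j-1}$, but I actually want the break of the $\sigma_j$-action on $K_j$, which is the same $b_j$ since $\langle\sigma_j\rangle$ surjects onto $H_{j-1}/H_j$. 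Converting to $v_n$: $v_n((\sigma_j-1)\operatorname{Tr}_{n,j}\rho) = p^{n-j}\big(v_j(\operatorname{Tr}_{n,j}\rho) + b_j\big)$. On the other hand, applying the trace-valuation bound to $\psi_j\rho$ gives $v_n(\operatorname{Tr}_{n,j}(\psi_j\rho)) \geq p^{n-j}v_j(\operatorname{Tr}_{n,j}(\psi_j\rho)) - \mathfrak d_{n,j}$ is the wrong direction; instead I use the general inequality $v_j(\operatorname{Tr}_{n,j}\beta) \geq \big(v_n(\beta) + \mathfrak d_{n,j}\big)/p^{n-j}$, i.e. $v_n(\beta) \leq p^{n-j}v_j(\operatorname{Tr}_{n,j}\beta) - \mathfrak d_{n,j}$ fails in general too. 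The correct move is: $v_n(\beta) \le$ something only if we know $\operatorname{Tr}_{n,j}\beta\neq 0$ controls a lower bound on the trace in terms of $v_n(\beta)$. So I would instead use the bound in the form $v_n(\operatorname{Tr}_{n,j}\beta) \ge v_n(\beta) - (\text{max drop})$ and combine: since $\operatorname{Tr}_{n,j}(\psi_j\rho) = (\sigma_j-1)\operatorname{Tr}_{n,j}\rho$ has a known exact valuation, and $\operatorname{Tr}_{n,j}$ can only raise $v_n$ by at most $\mathfrak d_{n,j}$ (indeed $v_n(\operatorname{Tr}_{n,j}\beta)\geq v_n(\beta)$ is false; rather $v_n(\operatorname{Tr}_{n,j}\beta) \geq v_n(\beta) + \mathfrak d_{n,j} - (p^{n-j}-1)\cdot(\text{stuff})$)… — I will need to pin down the sharp statement: $v_n(\beta)\leq v_n(\operatorname{Tr}_{n,j}\beta) \cdot$ — actually the clean fact is $\operatorname{Tr}_{n,j}(\euP_n^m) = \euP_j^{\lceil (m + \mathfrak d_{n,j})/p^{n-j}\rceil}$, so $v_n(\beta) = m \Rightarrow v_j(\operatorname{Tr}_{n,j}\beta) \geq \lceil (m+\mathfrak d_{n,j})/p^{n-j}\rceil$, giving $m \leq p^{n-j}v_j(\operatorname{Tr}_{n,j}\beta) - \mathfrak d_{n,j} + (p^{n-j}-1)$. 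Thus
\[
v_n(\psi_j\rho) \leq p^{n-j}\,v_j\big((\sigma_j-1)\operatorname{Tr}_{n,j}\rho\big) - \mathfrak d_{n,j} + p^{n-j} - 1.
\]

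Finally I would substitute $v_j((\sigma_j-1)\operatorname{Tr}_{n,j}\rho) = v_j(\operatorname{Tr}_{n,j}\rho) + b_j$ and the analogous sharp evaluation of $v_j(\operatorname{Tr}_{n,j}\rho)$ in terms of $v_n(\rho)$ — here the hypothesis $p\nmid v_j(\operatorname{Tr}_{n,j}\rho)$ makes the corresponding ceiling sharp, i.e. $p^{n-j}v_j(\operatorname{Tr}_{n,j}\rho) = v_n(\rho) + \mathfrak d_{n,j} - r$ for the appropriate residue $r$ with $0\le r \le p^{n-j}-1$, and in fact the non-congruence condition forces $r = p^{n-j}-1$ so that everything cancels — and check that the resulting inequality is exactly $v_n(\psi_j\rho)\leq v_n(\rho)+p^{n-j}b_j$. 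The main obstacle I anticipate is the second paragraph's bookkeeping: getting the sharp form of the trace-on-ideals identity right (the exact value of $\mathfrak d_{n,j}$ as a function of $b_{j+1},\dots,b_n$ via \eqref{upper}, and the precise residue appearing in the ceiling), and confirming that the two congruence conditions on $v_n(\rho)$ are equivalent to $p\nmid v_j(\operatorname{Tr}_{n,j}\rho)$ and simultaneously force the error terms $r$ to take the extreme value needed for the bound to be tight. Everything else — $K_j$-linearity of the trace, the degree-$p$ break computation, converting between $v_j$ and $v_n = p^{n-j}v_j$ on $K_j$ — is routine. Note that Assumptions \ref{ass1}–\ref{ass2} enter only through guaranteeing $p\nmid b_j$ (so the $b_j$'s behave uniformly mod $p$) and are what make the congruence translation in the parenthetical correct.
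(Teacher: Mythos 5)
Your plan is essentially the paper's proof: both reduce via the identity $\Tr_{n,j}\psi_j=(\sigma_j-1)\Tr_{n,j}$, invoke the trace-on-ideals formula controlled by the different $\euD_{K_n/K_j}$, and exploit the degree-$p$ break for the $\sigma_j$-action on $K_j$. The plan would close up, but a few remarks on the execution are warranted. First, your trace-on-ideals formula should use a floor, not a ceiling: $\Tr_{n,j}(\euP_n^m)=\euP_j^{\lfloor(m+\mathfrak d_{n,j})/p^{n-j}\rfloor}$; the bound $m\leq p^{n-j}v_j(\Tr_{n,j}\beta)-\mathfrak d_{n,j}+(p^{n-j}-1)$ you then write down is the one that actually follows from the floor version (and is the correct one), so the ``ceiling'' is a typo that happens not to propagate. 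Second, you have the roles of the two congruence hypotheses swapped: the congruence $v_n(\rho)\equiv b_n\bmod p^{n-j}$, together with Hilbert's formula for $\mathfrak d_{n,j}$ and Assumption \ref{ass2} (which kills all the $(b_{i+1}-b_i)$-terms mod $p^{n-j}$), is what forces $v_n(\rho)+\mathfrak d_{n,j}\equiv -1\bmod p^{n-j}$ and hence makes your inversion error term take its extreme value $p^{n-j}-1$; the \emph{non}-congruence condition mod $p^{n-j+1}$ is what guarantees $p\nmid v_j(\Tr_{n,j}\rho)$, so that $\sigma_j-1$ drops the valuation by exactly $b_j$. These two computations are precisely the ``bookkeeping'' you flag but do not carry out, and they are the entire content of the proof. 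Finally, the paper sidesteps the slightly delicate inversion of the trace inequality by arguing by contradiction: if $v_n(\psi_j\rho)>r':=r+p^{n-j}b_j$ then $v_j(\Tr_{n,j}\psi_j\rho)\geq s_{r'+1}>s_{r'}=s_r+b_j$, contradicting the exact valuation of $(\sigma_j-1)\Tr_{n,j}\rho$; the key inequality $s_{r'+1}>s_{r'}$ plays the role of your ``maximal $m$ with $s_m\leq s$'' step and is cleaner to state.
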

\begin{proof}
The case $j=n$ is trivial since we necessarily have
$\psi_n=\sigma_n-1$. Fix $j<n$ and consider the different
$\mathfrak{D}_{K_{n}/K_{j}}$ of the extension $K_n/K_j$.  Hilbert's
formula for the exponent of the different \cite[IV\S1 Proposition
  4]{serre:local} gives $\mathfrak{D}_{K_{n}/K_{j}}=\euP_n^m$ where
$m=(b_{j+1}+1)(p^{n-j}-1)+\sum_{i=j+1}^{n-1}(b_{i+1}-b_{i})(p^{n-i}-1)$.
For any $r \in \Z$, we have $\mbox{Tr}_{n,j}(\euP_n^r)=\euP_{j}^{s_r}$
where $s_r=\lfloor(m+r)/p^{n-j}\rfloor$ and $\lfloor x\rfloor$ denotes
the greatest integer $\leq x$.  Since $p^{i+1}\mid (b_{i+1}-b_i)$ by
Assumption \ref{ass2}, it follows that
$$s_r=(b_{j+1}+1)+\sum_{i=j+1}^{n-1}(b_{i+1}-b_{i})p^{j-i}+\left\lfloor\frac{-1-b_n+r}{p^{n-j}}\right\rfloor.$$
In particular, if $r=b_n +kp^{n-j}$ for some $k \in \Z$, we find that 
$s_{r+1}>s_r$ and $s_r \equiv b_{j+1}+k \bmod p$. 
Let $\rho \in K_n$ with $v_n(\rho)=r$. We may write an arbitrary
element $\alpha \in \euP_n^r$ as $\alpha = x \rho + \nu$ with $x \in
\euO_j$ and $\nu \in \euP_n^{r+1}$. Since $s_{r+1}>s_r$, it follows
that $v_j(\Tr_{n,j}\rho)=s_r$, and hence that $v_j((\sigma_j-1)
\Tr_{n.j}\rho)=s_r+b_j$ provided that $k \not \equiv -b_{j+1} \bmod
p$. Recalling Assumption \ref{ass1}, we have therefore shown that if
$v_n(\rho)=r \equiv b_n \bmod p^{n-j}$ but $r \not \equiv
b_n(1-p^{n-j}) \bmod p^{n-j+1}$ then $v_n((\sigma_j-1)\Tr_{n,j} \rho) =
s_r+b_j$.

Now, with $\rho$ as above, suppose that $\psi_j$ is any element of
$K_0[G]$ with $v_n(\psi_j \rho)>r':=r+p^{n-j} b_j$. Since $r' \equiv
b_n \bmod p^{n-j}$, we have $s_{r'+1}>s_{r'}=s_r+b_j$, so that 
$v_j(\Tr_{n,j} \psi_j \rho) > s_r + b_j = v_n((\sigma_j-1)\Tr_{n,j}
\rho)$. Hence $\Tr_{n,j} \psi_j \neq (\sigma_j-1)\Tr_{n,j}$.
\end{proof}

We conclude this section by recording a technical question. 

\begin{question}\label{warning}
A bijection exists between the one-units of $\euO_j$ and the choices
possible in Choice \ref{choice2}.  Namely, given $X_j$ satisfying
Choice \ref{choice2} and any $u_j\in 1+\euP_j$, then $u_jX_j$ will
also satisfy Choice \ref{choice2}.  So how does one optimize the
choice of $X_j$ in Choice \ref{choice2} to maximize the precision
$\euc$ available in Assumption \ref{ass3}?
\end{question}
We do not
address this question here.  Neither was it addressed in
\cite{elder:scaffold,elder:sharp-crit}.  Thus far, in all these cases,
naive choices were made that turned out to be good enough for a
determination of Galois module structure. There has been no need.

\subsection{Proof of Theorem \ref{main}}\label{sec2}
We are interested in analyzing the expression $\Psi_i\lambda_j$
for $1\leq i\leq n$ and $j\in\mathbb{Z}$, where
$\lambda_j$ is as in Definition \ref{lambda} and
$\Psi_i$ is as in Definition \ref{Theta}.
So observe that
$\Psi_i\lambda_j=\pi_0^{f_j}\cdot \Psi_i\rho_{\eua(j)}$ where
$$\Psi_i\rho_{\eua(j)}=\Psi_i\binom{X_n}{a_{(0)}}\binom{X_{n-1}}{a_{(1)}}\cdots
\binom{X_1}{a_{(n-1)}},$$ for $\eua(j)=a=\sum_{i=1}^na_{(n-i)}p^{n-i}$
with $0\leq a_{(n-i)}<p$.  Our analysis is technical.  To motivate it,
we begin by considering the special case treated in
\cite{elder:scaffold} where $\epsilon_{i,j}=0$ for all $1\leq i,j\leq n$.
This gives us the opportunity to more fully
justify \cite[(4)]{elder:scaffold}. Observe that Theorem \ref{main}
with $\euc=\infty$ follows from Proposition \ref{propB}
setting $\kappa_i=0$.

\begin{proposition}\label{propB}
Suppose that Assumptions \ref{ass1} and \ref{ass2} hold, and that
$\epsilon_{i,j}=0$ for all $1\leq i,j\leq n$ so that Assumption
\ref{ass3} holds vacuously. Then for all $0\leq a_{(i)}< p$ and
$\kappa_i\in K_0$,
$$\Psi_j\prod_{i=1}^n\binom{X_i+\kappa_i}{a_{(n-i)}}=\binom{X_j+\kappa_j}{a_{(n-j)}-1}\prod_{i\neq j}\binom{X_i+\kappa_i}{a_{(n-i)}}$$
\end{proposition}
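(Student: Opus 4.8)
\emph{Proof plan.} Since $\Psi_j=\Theta_j-1$ and $\binom{Y+1}{m}=\binom{Y}{m}+\binom{Y}{m-1}$, the asserted formula is equivalent to saying that $\Theta_j$ acts on $\prod_{i=1}^n\binom{X_i+\kappa_i}{a_{(n-i)}}$ by applying the substitution $X_j\mapsto X_j+1$ to the $j$th factor alone. The plan is to prove, by downward induction on $j$ from $n$ to $1$, the formally stronger and induction-friendly identity
\begin{equation}\label{star}
\Theta_j^{[Y]}\prod_{i=1}^n\binom{X_i+\kappa_i}{a_{(n-i)}}=\binom{X_j+\kappa_j+Y}{a_{(n-j)}}\prod_{i\neq j}\binom{X_i+\kappa_i}{a_{(n-i)}}
\end{equation}
for all $Y\in K_0$, all $0\le a_{(i)}<p$, and all $\kappa_i\in K_0$; taking $Y=1$ and using $\Theta_j^{[1]}=\Theta_j$ then gives the proposition by Pascal's rule on the $j$th factor. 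Two preliminaries set the stage. First, since $\mu_{j,j}=1$ and $\epsilon_{j,j}=0$ force $\sigma_jX_j=X_j+1$ with $\sigma_j$ of order $p$ on $K_j$, we must be in characteristic $p$, each $X_j$ is an Artin--Schreier generator of $K_j/K_{j-1}$, $K_n=K_0[X_1,\dots,X_n]$, and each $\binom{X_k+c}{m}$ with $c\in K_0$ and $0\le m<p$ is a polynomial in $X_k$ of degree $<p$ with coefficients in $K_0$. Second, we take every $\mu_{i,j}$ to lie in $K_0$: Assumption~\ref{ass2} (which gives $p^j\mid v_j(\mu_{i,j})=b_i-b_j$) makes this compatible with the prescribed valuation, and it is exactly what the demand $\mu_{i,j}(\sigma_j-1)\in K_0[G]$ amounts to; then, under $\epsilon_{i,j}=0$, $\sigma_iX_j=X_j+\mu_{i,j}$ with $\mu_{i,j}\in K_0$ for $i\le j$ (with $\mu_{j,j}=1$), while $\sigma_i$ fixes $K_j$ for $i>j$.

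For the induction, the base case $j=n$ is immediate: $\Theta_n=\sigma_n$ is a ring automorphism of $K_n$ fixing $K_0$ and every $X_i$ with $i<n$ and sending $X_n\mapsto X_n+1$, hence $\Theta_n^{[Y]}=\sigma_n^Y$ for integral $Y$ sends $X_n\mapsto X_n+Y$ and gives \eqref{star} at $Y=0,1,\dots,p-1$; as both sides of \eqref{star} are polynomials in $Y$ of degree $<p$, they agree for all $Y$. For the inductive step, assume \eqref{star} for all indices $>j$ and write $\Theta_j=\sigma_j\,\Theta_n^{[-\mu_{j,n}]}\Theta_{n-1}^{[-\mu_{j,n-1}]}\cdots\Theta_{j+1}^{[-\mu_{j,j+1}]}$ (Definition~\ref{Theta}). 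Applying the truncated--power factors from the right and invoking the inductive hypothesis successively, with $Y=-\mu_{j,k}\in K_0$, for $k=j+1,\dots,n$ --- each $\Theta_k^{[-\mu_{j,k}]}$ changes only the $k$th factor, replacing its constant $\kappa_k$ by $\kappa_k-\mu_{j,k}$, which is again in $K_0$, so the hypothesis keeps applying --- turns the product into
\[
\binom{X_j+\kappa_j}{a_{(n-j)}}\prod_{k>j}\binom{X_k+\kappa_k-\mu_{j,k}}{a_{(n-k)}}\prod_{k<j}\binom{X_k+\kappa_k}{a_{(n-k)}}.
\]
Now apply the ring automorphism $\sigma_j$: it fixes $K_0$ (hence all $\kappa_i$ and all $\mu_{j,k}$), fixes $X_k$ for $k<j$, sends $X_j\mapsto X_j+1$, and (using $\epsilon_{j,k}=0$) sends $X_k\mapsto X_k+\mu_{j,k}$ for $k>j$. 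Thus the $j$th factor becomes $\binom{X_j+\kappa_j+1}{a_{(n-j)}}$, the factors with $k<j$ are unchanged, and in each factor with $k>j$ the shift $X_k\mapsto X_k+\mu_{j,k}$ exactly cancels the $-\mu_{j,k}$ inserted above. This proves \eqref{star} for $\Theta_j=\Theta_j^{[1]}$; iterating the case $Y=1$ gives it for integral $Y$ (where $\Theta_j^{[Y]}=\Theta_j^Y$), and the same degree-$<p$ polynomial-identity argument then extends it to all $Y\in K_0$, completing the induction; the proposition follows as noted.

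The step I expect to be the crux is this last cancellation: the ``spurious'' substitutions $X_k\mapsto X_k-\mu_{j,k}$ ($k>j$) introduced by the inner truncated powers are undone by $\sigma_j$ only because $\sigma_j$ fixes the $\mu_{j,k}$, which is in turn why one wants the $\mu_{i,j}$ to lie in $K_0$ --- precisely the content of Assumption~\ref{ass2} and of the demand $\mu_{i,j}(\sigma_j-1)\in K_0[G]$. A related point demanding care is that $\Theta_k^{[Y]}$ is not a ring endomorphism of $K_n$ (the substitution $X_k\mapsto X_k+Y$ respects the Artin--Schreier relation only for $Y\in\mathbb{F}_p$), so one must reason through the product identity \eqref{star} rather than through ``$\Theta_k^{[Y]}$ as a homomorphism''; this is legitimate only because the products in question are separated in the variables, each truncated power acting through a single $X_k$. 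With nonzero error terms $\epsilon_{i,j}$ all of these identities survive only modulo a positive power of $\euP_n$, and quantifying that loss is exactly the refinement carried out for Theorem~\ref{main} in \S\ref{sec2}; the present proposition is thus best read as the $\euT=\infty$ skeleton of that argument.
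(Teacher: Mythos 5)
Your proof is correct, and it mirrors the paper's downward induction on $j$: you peel off the factors $\Theta_k^{[-\mu_{j,k}]}$ of $\Theta_j$ one at a time using the inductive hypothesis (each one replacing $\kappa_k$ by $\kappa_k-\mu_{j,k}$, still an element of $K_0$), and then let the ring automorphism $\sigma_j$ undo those shifts while advancing $X_j\mapsto X_j+1$; Pascal's rule then converts the $\Theta_j$-statement into the $\Psi_j$-statement. The one substantive technical difference is how you compute the effect of a truncated power $\Theta_j^{[Y]}$. The paper derives the intermediate identity $\Theta_j^{[-\mu_{k,j}]}\prod(\cdots)=\binom{X_j+\kappa_j-\mu_{k,j}}{a_{(n-j)}}\prod_{i\neq j}(\cdots)$ combinatorially, by expanding $\Theta_j^{[-\mu_{k,j}]}=\sum_s\binom{-\mu_{k,j}}{s}\Psi_j^s$, applying the $\Psi_j$-statement $s$ times, and then using Vandermonde's Convolution. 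You instead promote the $Y$-parametric version \eqref{star} to be the inductive invariant, establish it for integer $0\le Y<p$ (where $\Theta_j^{[Y]}=\Theta_j^Y$ is an honest automorphism power, since $\binom{Y}{i}=0$ for $i>Y$), and extend to all $Y\in K_0$ by a degree-$<p$ interpolation argument over the $p$ distinct values $0,\dots,p-1$. That cleanly trades a combinatorial identity for polynomial uniqueness, and both buy the same conclusion. Your observation that $\epsilon_{i,j}=0$ forces characteristic $p$ (and $G$ elementary abelian) is correct and is exactly what your interpolation step silently needs, namely $\mathbb{F}_p\subseteq K_n$ so that $0,\dots,p-1$ are $p$ distinct points; the paper's Vandermonde route is characteristic-agnostic, so it never needs to make this explicit. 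You are also right that $\mu_{i,j}\in K_0$ (equivalently Assumption~\ref{ass2}), hence fixed by $\sigma_j$, is precisely what makes the final cancellation go through, and the warning that $\Theta_k^{[Y]}$ is not a ring homomorphism for general $Y$ is a worthwhile caveat that the paper's presentation leaves implicit.
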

\begin{proof}
Note that $\Theta_j$ fixes $X_i$ for $i<j$. So it is sufficient to prove by inducting down from $j=n$ to $j=1$ that
$\Psi_j\prod_{i=j}^n\binom{X_i+\kappa_i}{a_{(n-i)}}=\binom{X_j+\kappa_j}{a_{(n-j)}-1}\prod_{i=j+1}^n\binom{X_i+\kappa_i}{a_{(n-i)}}$.  
Recall
$\Theta_n=\sigma_n$.  Pascal's Identity states that
$\binom{X_n+\kappa_n}{a_{(0)}-1}+\binom{X_n+\kappa_n}{a_{(0)}}=\binom{X_n+\kappa_n+1}{a_{(0)}}$.
Thus
$\Psi_n\binom{X_n+\kappa_n}{a_{(0)}}=\binom{X_n+\kappa_n}{a_{(0)}-1}$
for $1\leq a_{(0)}< p$. Recall
$\Theta_{n-1}=\sigma_{n-1}\Theta_n^{[-\mu_{n-1,n}]}$.
Observe that
\begin{multline*}
\Theta_n^{[-\mu_{n-1,n}]}\binom{X_n+\kappa_n}{a_{(0)}}\binom{X_{n-1}+\kappa_{n-1}}{a_{(1)}}\\
=\sum_{s=0}^{p-1}\binom{-\mu_{n-1,n}}{s}\Psi_n^s\binom{X_n+\kappa_n}{a_{(0)}}\binom{X_{n-1}+\kappa_{n-1}}{a_{(1)}}\\
=\sum_{s=0}^{a_{(0)}}\binom{-\mu_{n-1,n}}{s}\binom{X_n+\kappa_n}{a_{(0)}-s}\binom{X_{n-1}+\kappa_{n-1}}{a_{(1)}}\\=
\binom{X_n+\kappa_n-\mu_{n-1,n}}{a_{(0)}}\binom{X_{n-1}+\kappa_{n-1}}{a_{(1)}}
\end{multline*}
where the last equality is a consequence of Vandermonde's Convolution
Identity, $\sum_{i=0}^m\binom{a}{i}\binom{b}{m-i}=\binom{a+b}{m}$.
Thus because $\sigma_{n-1}X_n=X_n+\mu_{n-1,n}$ we have
$$\sigma_{n-1}\Theta_n^{[-\mu_{n-1,n}]}\binom{X_n+\kappa_n}{a_{(0)}}\binom{X_{n-1}+\kappa_{n-1}}{a_{(1)}}=\binom{X_n+\kappa_n}{a_{(0)}}\binom{X_{n-1}+\kappa_{n-1}+1}{a_{(1)}}.$$
So
$\Psi_{n-1}\binom{X_n+\kappa_n}{a_{(0)}}\binom{X_{n-1}+\kappa_{n-1}}{a_{(1)}}=
\binom{X_n+\kappa_n}{a_{(0)}}\binom{X_{n-1}+\kappa_{n-1}}{a_{(1)}-1}$,
based upon Pascal's Identity.
Note the role of Pascal's Identity and Vandermonde's Convolution
Identity. These two identities will be used
repeatedly and without mention in the induction step.

Assume that the proposition holds for all $j$ such that $k< j\leq
n$.  Thus for each $j$ with $k< j\leq n$,
\begin{multline} \label{parts}
\Theta_{j}^{[-\mu_{k,j}]}\prod_{i=1}^n\binom{X_i+\kappa_i}{a_{(n-i)}}
=\sum_{s=0}^{p-1}\binom{-\mu_{k,j}}{s}\Psi_j^s
\prod_{i=1}^n\binom{X_i+\kappa_i}{a_{(n-i)}}\\
=\sum_{s=0}^{a_{(n-j)}}\binom{-\mu_{k,j}}{s}\binom{X_j+\kappa_j}{a_{(n-j)}-s}
\prod_{i\neq j}\binom{X_i+\kappa_i}{a_{(n-i)}}
=\binom{X_j+\kappa_j-\mu_{k,j}}{a_{(n-j)}}
\prod_{i\neq j}\binom{X_i+\kappa_i}{a_{(n-i)}}.
\end{multline}
Since $\kappa_j'=\kappa_j-\mu_{k,j}$ is just another element of $K_0$, we find, by applying (\ref{parts}) repeatedly that 
$$\prod_{s=k+1}^{n}\Theta_{s}^{[-\mu_{k,s}]}\cdot \prod_{i=1}^n\binom{X_i+\kappa_i}{a_{(n-i)}}
=
\prod_{i=k+1}^n\binom{X_i+\kappa_i-\mu_{k,i}}{a_{(n-i)}}\cdot 
\prod_{i=1}^k\binom{X_i+\kappa_i}{a_{(n-i)}}.$$
Thus 
$\Theta_k\prod_{i=1}^n\binom{X_i+\kappa_i}{a_{(n-i)}}
=
\prod_{i=k+1}^n\binom{X_i+\kappa_i}{a_{(n-i)}}\cdot 
\binom{X_k+\kappa_k+1}{a_{(n-k)}}
\cdot
\prod_{i=1}^{k-1}\binom{X_i+\kappa_i}{a_{(n-i)}}$,
which means that for $0\leq a_{(n-k)}<p$,
$\Psi_k\prod_{i=1}^n\binom{X_i+\kappa_i}{a_{(n-i)}}=
\binom{X_k+\kappa_k}{a_{(n-k)}-1}
\prod_{i\neq k}\binom{X_i+\kappa_i}{a_{(n-i)}}$.
\end{proof}

\subsubsection{Preliminary results for Theorem \ref{main}}

For $1\leq r\leq s\leq n$, set
$$M_r^s=\prod_{i=r}^{s-1}\mu_{i,i+1},$$
and define an ideal $I_r$ of $\euO_n$ by its generators:
$$I_r=\biggl (M_r^s\cdot \epsilon_{s,t}\cdot X_t^{-1}:r\leq s\leq t\leq  n\biggr ).$$
\begin{lemma}\label{I_r}
$$I_r=
\sum_{i=r}^{n}\epsilon_{r,i}X_{i}^{-1}\euO_n+
\sum_{s=0}^{n-r-1}
\mu_{r,n-s}I_{n-s}.$$
\end{lemma}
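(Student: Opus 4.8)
The plan is to prove the two inclusions between the $\euO_n$-submodules of $K_n$ on either side of the asserted equality. After reindexing the second sum on the right via $j=n-s$, the identity to be proved reads
$$I_r=\sum_{i=r}^{n}\epsilon_{r,i}X_{i}^{-1}\euO_n+\sum_{j=r+1}^{n}\mu_{r,j}I_j.$$
Two elementary facts about the products $M_r^s=\prod_{i=r}^{s-1}\mu_{i,i+1}$ carry the argument: (a) $M_r^r=1$ (empty product), so the generators of $I_r$ indexed by $s=r$ are exactly the $\epsilon_{r,t}X_t^{-1}$ with $r\leq t\leq n$; and (b) $M_r^s=M_r^jM_j^s$ for all $r\leq j\leq s$, in particular $M_r^s=\mu_{r,r+1}M_{r+1}^s$ when $s\geq r+1$. (If $r=n$, both sums on the right are empty and $I_n=\epsilon_{n,n}X_n^{-1}\euO_n$, so we may assume $r<n$.)

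For the inclusion of $I_r$ in the right-hand side, I would run through the generators $M_r^s\epsilon_{s,t}X_t^{-1}$ of $I_r$ ($r\leq s\leq t\leq n$). If $s=r$, fact (a) places the generator in the first sum. If $s\geq r+1$, fact (b) rewrites it as $\mu_{r,r+1}\bigl(M_{r+1}^s\epsilon_{s,t}X_t^{-1}\bigr)$, where $M_{r+1}^s\epsilon_{s,t}X_t^{-1}$ is a generator of $I_{r+1}$ (since $r+1\leq s\leq t\leq n$); hence it lies in $\mu_{r,r+1}I_{r+1}$, the $j=r+1$ term of the second sum. For the reverse inclusion, the first sum lies in $I_r$ because $\epsilon_{r,i}X_i^{-1}=M_r^r\epsilon_{r,i}X_i^{-1}$ is a generator of $I_r$; and for $r+1\leq j\leq n$, a generator $\mu_{r,j}M_j^s\epsilon_{s,t}X_t^{-1}$ of $\mu_{r,j}I_j$ (with $j\leq s\leq t\leq n$) equals $(\mu_{r,j}/M_r^j)\bigl(M_r^s\epsilon_{s,t}X_t^{-1}\bigr)$ by fact (b), so $\mu_{r,j}I_j\subseteq I_r$ as soon as $\mu_{r,j}/M_r^j\in\euO_n$.

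Thus the whole lemma reduces to the valuation inequality $v_n(\mu_{r,j})\geq v_n(M_r^j)$ for $r\leq j\leq n$, which I expect to be the one substantive point. To prove it I would use the relation $v_n=p^{n-\ell}v_\ell$ on $K_\ell$ together with $v_j(\mu_{i,j})=b_i-b_j$ from \eqref{sigmaX}, obtaining $v_n(\mu_{r,j})=p^{n-j}(b_r-b_j)$ and $v_n(M_r^j)=\sum_{i=r}^{j-1}p^{n-i-1}(b_i-b_{i+1})$. A weighted telescoping of the latter sum, combined with the geometric-series identity $(p-1)\sum_{i=r+1}^{j-1}p^{n-i-1}=p^{n-r-1}-p^{n-j}$, then gives
$$v_n(\mu_{r,j})-v_n(M_r^j)=(p-1)\sum_{i=r+1}^{j-1}p^{n-i-1}(b_i-b_r)\geq 0,$$
the final inequality being immediate from $b_r\leq b_{r+1}\leq\cdots\leq b_n$. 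Once this ``excess'' is recognized as a nonnegative combination of the differences $b_i-b_r$, the rest is routine manipulation of generating sets. (Incidentally, the same estimate shows that the terms $\mu_{r,j}I_j$ with $j\geq r+2$ are redundant on the right, lying inside $\mu_{r,r+1}I_{r+1}$; the stated form is presumably retained because it is what is used subsequently.)
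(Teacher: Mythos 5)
Your proof is correct and essentially matches the paper's: both rest on the factorization $M_r^s=\mu_{r,r+1}M_{r+1}^s$ and on the valuation inequality you derive, $v_n(\mu_{r,j})-v_n(M_r^j)=(p-1)\sum_{i=r+1}^{j-1}p^{n-i-1}(b_i-b_r)\geq 0$, which the paper establishes in the equivalent one-step form $v_n(\mu_{r,i})\geq v_n(\mu_{r,i-1}\mu_{i-1,i})$ and then chains. Your closing observation that the terms $\mu_{r,j}I_j$ with $j\geq r+2$ are redundant is exactly how the paper organizes the argument, first proving $I_r=\sum_{i=r}^n\epsilon_{r,i}X_i^{-1}\euO_n+\mu_{r,r+1}I_{r+1}$ and then showing the remaining summands are absorbed.
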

\begin{proof}
We can partition the generators of $I_r$ into those with $r=s$ and those with $r<s$. When $r=s$ we have $M_r^s=1$.
When $r<s$ we have $M_r^s=\mu_{r,r+1}M_{r+1}^s$.
As a result,
$$I_r=\sum_{i=r}^n\epsilon_{r,i}X_i^{-1}\euO_n+\mu_{r,r+1}I_{r+1}.$$

Given $i-1\geq r$ we have $b_{i-1}\geq b_r$ and thus
$v_n(\mu_{r,i})\geq v_n(\mu_{r,i-1}\mu_{i-1,i})$. This means that
$\mu_{r,i}\cdot M_i^s\epsilon_{s,t}X_t^{-1}\euO_n\subseteq 
\mu_{r,i-1}\cdot M_{i-1}^s\epsilon_{s,t}X_t^{-1}\euO_n$,
and so $\mu_{r,i}I_i\subseteq \mu_{r,i-1}I_{i-1}$. As a result,
for $1\leq r\leq n$, we have
$\mu_{r,n}I_n\subseteq \mu_{r,n-1}I_{n-1}\subseteq \cdots \subseteq \mu_{r,r+1}I_{r+1}$, and thus
$$\sum_{s=0}^{n-r-1}
\mu_{r,n-s}I_{n-s}=\mu_{r,r+1}I_{r+1}.$$
\end{proof}
\begin{lemma}\label{oops}
$I_r\subseteq X_r^{-1}\euP_n^{\euc}$ if and only if for all $1\leq r\leq s\leq t\leq n$,
$$v_n(\epsilon_{s,t})-v_n(\mu_{s,t})\geq (p-1)\sum_{i=r}^{s-1}p^{n-i-1}b_i+(p^{n-s}-p^{n-t})b_s+\euc.$$
\end{lemma}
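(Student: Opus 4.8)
The plan is to reduce the ideal containment to a finite family of valuation inequalities, one for each generator of $I_r$, to evaluate each valuation explicitly in terms of the ramification numbers $b_i$, and then to check an elementary identity among powers of $p$. Since each step below is an equivalence, this yields the ``if and only if''.

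First I would use that $X_r^{-1}\euP_n^{\euT}$ is an $\euO_n$-submodule of $K_n$: consequently $I_r\subseteq X_r^{-1}\euP_n^{\euT}$ holds if and only if every generator $M_r^s\cdot\epsilon_{s,t}\cdot X_t^{-1}$ of $I_r$ (with $r\leq s\leq t\leq n$) lies in $X_r^{-1}\euP_n^{\euT}$, that is, if and only if
$$v_n(M_r^s)+v_n(\epsilon_{s,t})+v_n(X_t^{-1})\ \geq\ v_n(X_r^{-1})+\euT\qquad\text{for all }r\leq s\leq t\leq n.$$
Next I would record the relevant valuations. In the tower $K_0\subset\cdots\subset K_n$ of totally ramified extensions one has $v_n|_{K_j}=p^{n-j}v_j$; hence Choice \ref{choice2} gives $v_n(X_t^{-1})=p^{n-t}b_t$ and $v_n(X_r^{-1})=p^{n-r}b_r$, while \eqref{sigmaX} gives $v_n(\mu_{i,j})=p^{n-j}(b_i-b_j)$, the integer $b_i-b_j$ being divisible by $p$ so that this is consistent with $\mu_{i,j}\in K_{j-1}$. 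In particular $v_n(\mu_{i,i+1})=p^{n-i-1}(b_i-b_{i+1})$, so $v_n(M_r^s)=\sum_{i=r}^{s-1}p^{n-i-1}(b_i-b_{i+1})$, and also $v_n(\mu_{s,t})=p^{n-t}(b_s-b_t)$.

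Substituting these into the displayed inequality and solving for $v_n(\epsilon_{s,t})$, the containment $I_r\subseteq X_r^{-1}\euP_n^{\euT}$ becomes the assertion that $v_n(\epsilon_{s,t})\geq p^{n-r}b_r-p^{n-t}b_t-\sum_{i=r}^{s-1}p^{n-i-1}(b_i-b_{i+1})+\euT$ for all $r\leq s\leq t\leq n$. It then remains to identify this lower bound with the one in the statement. Using $v_n(\mu_{s,t})=p^{n-t}(b_s-b_t)$, the two bounds agree precisely when
$$p^{n-r}b_r-\sum_{i=r}^{s-1}p^{n-i-1}(b_i-b_{i+1})\ =\ (p-1)\sum_{i=r}^{s-1}p^{n-i-1}b_i+p^{n-s}b_s,$$
and I would prove this numerical identity by reindexing $\sum_{i=r}^{s-1}p^{n-i-1}b_{i+1}=\sum_{j=r+1}^{s}p^{n-j}b_j$ and collecting coefficients, so that each $b_i$ with $r\leq i\leq s-1$ picks up $p^{n-i}-p^{n-i-1}=(p-1)p^{n-i-1}$ and $b_s$ picks up $p^{n-s}$ (or, equally well, by induction on $s-r$). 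This identity is the only genuine computation in the argument; there is no conceptual difficulty, and the only real hazard is an off-by-one slip in the exponents of $p$ or in the ranges of summation. Once it is in place, the two forms of the bound coincide and the ``if and only if'' follows at once.
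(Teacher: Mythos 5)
Your proof is correct and follows essentially the same approach as the paper: reduce the ideal containment to a generator-by-generator valuation inequality, write down the valuations of $M_r^s$, $X_t^{-1}$, $X_r^{-1}$, $\mu_{s,t}$ in terms of the $b_i$, and then rearrange into the stated form via the telescoping identity on powers of $p$. The one thing you make more explicit than the paper is the remark that $X_r^{-1}\euP_n^{\euT}$ is an $\euO_n$-module, which is what justifies the reduction to generators; the paper leaves that implicit.
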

\begin{proof}
Observe that $v_n(M_r^s\epsilon_{s,t}X_t^{-1})\geq v_n(X_r^{-1}\pi_n^{\euc})$ is
equivalent to
$\sum_{i=r}^{s-1}p^{n-i-1}(b_i-b_{i+1})+v_n(\epsilon_{s,t})+p^{n-t}b_t\geq
p^{n-r}b_r+\euc$, and that this is equivalent to
$v_n(\epsilon_{s,t})-v_n(\mu_{s,t})\geq
p^{n-r}b_r-p^{n-t}b_t-\sum_{i=r}^{s-1}p^{n-i-1}(b_i-b_{i+1})-p^{n-t}(b_s-b_t)
+\euc=(p-1)\sum_{i=r}^{s-1}p^{n-i-1}b_i+(p^{n-s}-p^{n-t})b_s+\euc$.
\end{proof}
\begin{corollary}\label{assump3}
Assumption \ref{ass3} holds with precision $\euc$ if and only if
$I_r\subseteq X_r^{-1}\euP_n^{\euc}$ for all $1\leq
r\leq n$.
\end{corollary}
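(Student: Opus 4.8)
The plan is to recognize Assumption \ref{ass3} as precisely the inequality of Lemma \ref{oops} in the single case $r=1$, and then to observe that, among the conditions ``$I_r\subseteq X_r^{-1}\euP_n^{\euT}$'' for $1\le r\le n$, the one with $r=1$ is the strongest; hence all of them hold if and only if the $r=1$ one does, which is exactly Assumption \ref{ass3}.

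First I would relabel the indices in Assumption \ref{ass3}: write $s,t$ for $i,j$ and $i$ for the summation variable $k$. The resulting requirement, for all $1\le s\le t\le n$, is word-for-word the inequality in Lemma \ref{oops} with $r=1$; so Lemma \ref{oops} already gives ``Assumption \ref{ass3} $\iff I_1\subseteq X_1^{-1}\euP_n^{\euT}$''. The ``only if'' half of the statement is then immediate: if $I_r\subseteq X_r^{-1}\euP_n^{\euT}$ holds for every $r$, specialize to $r=1$.

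For the ``if'' half, assume Assumption \ref{ass3} and fix $r$ with $1\le r\le n$. By Lemma \ref{oops}, $I_r\subseteq X_r^{-1}\euP_n^{\euT}$ amounts to $v_n(\epsilon_{s,t})-v_n(\mu_{s,t})\ge (p-1)\sum_{i=r}^{s-1}p^{n-i-1}b_i+(p^{n-s}-p^{n-t})b_s+\euT$ for all $r\le s\le t\le n$. Fix such a pair $(s,t)$, so $s\ge r$. Assumption \ref{ass3}, read for this pair, gives the identical bound but with $\sum_{i=1}^{s-1}$ in place of $\sum_{i=r}^{s-1}$. Each term dropped in passing from the longer sum to the shorter one is $(p-1)p^{n-i-1}b_i$ with $1\le i\le r-1\le n-1$, hence nonnegative, since $p^{n-i-1}$ is a positive integer and $b_i\ge b_1\ge 1$ (the $b_i$ are nondecreasing and $p\nmid b_1$). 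Therefore $\sum_{i=1}^{s-1}p^{n-i-1}b_i\ge\sum_{i=r}^{s-1}p^{n-i-1}b_i$, and the required inequality follows from Assumption \ref{ass3}; this gives $I_r\subseteq X_r^{-1}\euP_n^{\euT}$.

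I do not foresee a genuine obstacle: the entire content is that the right-hand side of Lemma \ref{oops} is non-increasing in $r$ because the $b_i$ are positive, so one only needs to keep the index bookkeeping straight. (An alternative, slightly longer, route avoids re-using the inequality form of Lemma \ref{oops}: for $r\le s$ one has $M_1^s=M_1^rM_r^s$, so $M_1^r$ times each generator of $I_r$ is a generator of $I_1$; thus $M_1^rI_r\subseteq I_1\subseteq X_1^{-1}\euP_n^{\euT}$, i.e.\ $I_r\subseteq(M_1^rX_1)^{-1}\euP_n^{\euT}$, and the elementary identity $v_n\bigl((M_1^rX_1)^{-1}\bigr)=v_n(X_r^{-1})+(p-1)\sum_{i=1}^{r-1}p^{n-i-1}b_i\ge v_n(X_r^{-1})$ then yields $I_r\subseteq X_r^{-1}\euP_n^{\euT}$.)
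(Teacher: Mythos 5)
Your argument is correct and is essentially the argument the paper intends (the corollary is stated without proof, as an immediate consequence of Lemma \ref{oops}). You correctly identify that Assumption \ref{ass3} is, after relabeling $(i,j,k)\mapsto(s,t,i)$, exactly the $r=1$ instance of the inequality in Lemma \ref{oops}, and that the $r=1$ bound dominates the bound for every $r\geq 1$ since the dropped terms $(p-1)p^{n-i-1}b_i$ are positive; your alternative route via $M_1^r I_r\subseteq I_1$ and the valuation identity for $M_1^r$ also checks out. One small slip in presentation: you have the labels ``if'' and ``only if'' swapped relative to the usual reading of ``$A$ if and only if $B$'' (the ``only if'' direction is $A\Rightarrow B$, i.e.\ Assumption~\ref{ass3} $\Rightarrow$ all the containments), but the two implications themselves are proved correctly.
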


\subsubsection{Main result for Theorem \ref{main}}
Since $\Theta_j$ fixes $X_i$ for $i<j$, Theorem \ref{main} follows from
Corollary \ref{assump3} and Proposition \ref{prop-main} below by
specializing to the case $\kappa_i=0$.

\begin{proposition}\label{prop-main}
Suppose that Assumptions \ref{ass1} and \ref{ass2} hold, and
that Assumption \ref{ass3} holds with precision $\euc\geq 1$. Then
for all $0\leq a_{(i)}< p$ and any $\kappa_i\in K_0$ with $v_i(X_i)<v_i(\kappa_i)$,
$$\Psi_j\prod_{i=j}^n\binom{X_i+\kappa_i}{a_{(n-i)}}\equiv
\binom{X_j+\kappa_j}{a_{(n-j)}-1}\cdot \prod_{i=j+1}^n\binom{X_i+\kappa_i}{a_{(n-i)}}\bmod
\prod_{i=j}^n\binom{X_i}{a_{(n-i)}}\cdot I_j.$$
\end{proposition}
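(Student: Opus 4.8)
The plan is to prove Proposition \ref{prop-main} by downward induction on $j$, mirroring the structure of the proof of Proposition \ref{propB} but carrying along the error terms $\epsilon_{i,j}$ and tracking everything modulo the ideals $I_j$. The base case $j=n$ is immediate: since $\Theta_n=\sigma_n$ fixes nothing below, $(\sigma_n-1)\binom{X_n+\kappa_n}{a_{(0)}}=\binom{X_n+\kappa_n}{a_{(0)}-1}$ holds exactly by Pascal's identity, and $I_n=(\epsilon_{n,n}X_n^{-1})$, so the congruence is trivially satisfied (in fact with equality). For the inductive step, I would assume the statement for all indices $>k$ and prove it for $j=k$. The key computation is to understand $\Theta_{k}\prod_{i=k}^n\binom{X_i+\kappa_i}{a_{(n-i)}}$, where $\Theta_k=\sigma_k\Theta_n^{[-\mu_{k,n}]}\cdots\Theta_{k+1}^{[-\mu_{k,k+1}]}$.

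The heart of the argument is the analog of the display \eqref{parts}: I need to compute $\Theta_j^{[-\mu_{k,j}]}$ applied to a product $\prod_{i=k}^n\binom{X_i+\kappa_i}{a_{(n-i)}}$ for each $k<j\leq n$. Expanding $\Theta_j^{[-\mu_{k,j}]}=\sum_s\binom{-\mu_{k,j}}{s}\Psi_j^s$ and applying the induction hypothesis to each $\Psi_j^s$ (iterated), the main term is $\binom{X_j+\kappa_j-\mu_{k,j}}{a_{(n-j)}}\prod_{i\neq j,\,i\geq k}\binom{X_i+\kappa_i}{a_{(n-i)}}$ exactly as in Proposition \ref{propB}, via Vandermonde's convolution; the new feature is an error contribution living in $\binom{\cdot}{\cdot}\cdots I_j$ coming from the induction hypothesis, which I must show is absorbed into $\prod\binom{X_i}{a_{(n-i)}}\cdot I_k$. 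Here the containments $\mu_{k,j}I_j\subseteq\mu_{k,k+1}I_{k+1}$ from the proof of Lemma \ref{I_r} (valid because $b_{j-1}\geq b_k$) are exactly what is needed to collapse the various $\mu_{k,j}I_j$ into $\mu_{k,k+1}I_{k+1}$; and Lemma \ref{I_r} itself, $I_k=\sum_{i=k}^n\epsilon_{k,i}X_i^{-1}\euO_n+\mu_{k,k+1}I_{k+1}$, tells me that the final step $\sigma_k-1$ applied to $X_k$, which introduces $\mu_{k,k}=0$... rather, which acts on $X_j$ for $j>k$ via $(\sigma_k-1)X_j=\mu_{k,j}+\epsilon_{k,j}$, contributes the remaining generators $\epsilon_{k,i}X_i^{-1}\euO_n$ of $I_k$. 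So composing all the $\Theta_j^{[-\mu_{k,j}]}$ turns the product into $\prod_{i=k+1}^n\binom{X_i+\kappa_i-\mu_{k,i}}{a_{(n-i)}}\cdot\binom{X_k+\kappa_k}{a_{(n-k)}}$ modulo $\prod\binom{X_i}{a_{(n-i)}}\cdot I_k$, after which applying $\sigma_k$ (which sends $X_j\mapsto X_j+\mu_{k,j}+\epsilon_{k,j}$ for $j>k$ and shifts $X_k$) and subtracting, Pascal's identity on the $X_k$-factor yields the claimed congruence.

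The bookkeeping obstacle, and the step I expect to be hardest, is verifying that every error term generated along the way genuinely lands in $\prod_{i=k}^n\binom{X_i}{a_{(n-i)}}\cdot I_k$ — in particular controlling the ``cross terms'' where $\sigma_k$ (or a $\Psi_j^s$) hits an $X_j$ inside an already-perturbed binomial $\binom{X_j+\kappa_j-\mu_{k,j}+\cdots}{a_{(n-j)}}$, producing $\epsilon_{k,j}\cdot(\text{derivative-like factor})$ times a product of binomials of smaller total degree. One must check that $\epsilon_{k,j}X_j^{-1}$ times such a reduced product of binomials is still in $\prod\binom{X_i}{a_{(n-i)}}\cdot I_k$; this requires comparing $v_n$ of $\binom{X_j}{a_{(n-j)}-1}$ against $\binom{X_j}{a_{(n-j)}}\cdot X_j^{-1}$ (they have the same valuation when $1\leq a_{(n-j)}$, namely $-(a_{(n-j)}-1)p^{n-j}b_j$ versus $-a_{(n-j)}p^{n-j}b_j+p^{n-j}b_j$) and observing that the hypothesis $v_i(X_i)<v_i(\kappa_i)$ guarantees the perturbed binomials have the same valuation as the unperturbed ones, so $\prod\binom{X_i+\kappa_i}{a_{(n-i)}}$ and $\prod\binom{X_i}{a_{(n-i)}}$ generate the same module over $\euO_n$. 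With these valuation comparisons in hand, together with Lemma \ref{I_r} and the nesting $\mu_{k,j}I_j\subseteq\mu_{k,k+1}I_{k+1}$, all error contributions are seen to be absorbed, and the induction closes. Finally, specializing $\kappa_i=0$ and invoking Corollary \ref{assump3} (which equates Assumption \ref{ass3} with $I_r\subseteq X_r^{-1}\euP_n^{\euT}$, and hence $\prod\binom{X_i}{a_{(n-i)}}\cdot I_j\subseteq \rho_a X_j^{-1}\euP_n^\euT$ — matching the target tolerance in Theorem \ref{main}) yields Theorem \ref{main}.
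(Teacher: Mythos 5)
Your overall strategy is the same as the paper's: downward induction from $j=n$, expanding $\Theta_j^{[-\mu_{k,j}]}$ via Vandermonde and using Lemma \ref{I_r} together with the nesting $\mu_{k,j}I_j\subseteq\mu_{k,k+1}I_{k+1}$ to collect error terms into $\prod\binom{X_i}{a_{(n-i)}}\cdot I_k$. But there is a concrete error in your base case, and it propagates into the inductive step. You assert that $(\sigma_n-1)\binom{X_n+\kappa_n}{a_{(0)}}=\binom{X_n+\kappa_n}{a_{(0)}-1}$ ``holds exactly by Pascal's identity,'' and hence that the $j=n$ congruence ``is trivially satisfied (in fact with equality).'' This is false. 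By \eqref{sigmaX} and Choice \ref{choice2}, $\sigma_nX_n=X_n+1+\epsilon_{n,n}$, not $X_n+1$, so
$$(\sigma_n-1)\binom{X_n+\kappa_n}{a_{(0)}}=\binom{X_n+\kappa_n+1+\epsilon_{n,n}}{a_{(0)}}-\binom{X_n+\kappa_n}{a_{(0)}},$$
and the discrepancy $\binom{X_n+\kappa_n+1+\epsilon_{n,n}}{a_{(0)}}-\binom{X_n+\kappa_n+1}{a_{(0)}}$ is a genuinely nonzero element that must be shown to lie in $\epsilon_{n,n}X_n^{-1}\binom{X_n}{a_{(0)}}\euO_n=\binom{X_n}{a_{(0)}}\cdot I_n$ (e.g.\ by expanding it with Vandermonde as $\sum_{i\geq 1}\binom{\epsilon_{n,n}}{i}\binom{X_n+\kappa_n+1}{a_{(0)}-i}$ and comparing valuations). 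So the base case is not equality; it already tests the ideal $I_n$.

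The same oversight resurfaces in your inductive step: you describe the final $\sigma_k$ only acting on $X_j$ for $j>k$ via $(\sigma_k-1)X_j=\mu_{k,j}+\epsilon_{k,j}$, but $\sigma_k$ also perturbs $X_k$ itself, giving $\sigma_kX_k=X_k+1+\epsilon_{k,k}$, and the resulting term $\epsilon_{k,k}X_k^{-1}\euO_n$ is precisely the $i=k$ generator of $I_k$ in Lemma \ref{I_r}. (You also briefly wrote $\mu_{k,k}=0$; it is $\mu_{k,k}=1$.) Without accounting for $\epsilon_{k,k}$, the Pascal step at the end of your argument does not match the stated congruence. The fix is local and does not require abandoning your approach — indeed the paper's proof follows exactly the structure you outline, and the terms you omitted are absorbed by $I_n$ and $I_k$ respectively — but as written your proposal overlooks the very error terms that $I_j$ was introduced to capture at the diagonal entries $j=i$.
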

\begin{proof}
We induct down from $j=n$ to $j=1$.  Note $\Theta_n=\sigma_n$, and
observe that
$$\sigma_n\binom{X_n+\kappa_n}{a_{(0)}}=\binom{X_n+\kappa_n+1+\epsilon_{n,n}}{a_{(0)}}\equiv \binom{X_n+\kappa_n+1}{a_{(0)}}
\bmod
\epsilon_{n,n}X_n^{-1}\binom{X_n}{a_{(0)}}.$$
Using Pascal's Identity and the definition of $I_n$, this means
$$(\sigma_n-1)\binom{X_n+\kappa_n}{a_{(0)}}\equiv 
\binom{X_n+\kappa_n}{a_{(0)}-1}
\bmod 
\binom{X_n}{a_{(0)}}\cdot I_n.$$
We have proven the case $j=n$.

Assume that Proposition \ref{prop-main} holds for all $j$ with $k<j\leq n$. We aim
to prove that it continues to hold for $j=k$.
Since
$$\left\{\prod_{i=j}^n\binom{X_j+\kappa_i}{a_{(n-i)}}:0\leq
a_{(n-i)}<p\right\}$$ is a basis for $K_n$ over $K_{j-1}$, we can
express any element of $K_n$ in terms of this basis. Our assumption that
Proposition \ref{prop-main} holds for $k<j\leq n$,
together with Corollary \ref{assump3}, means that
$v_n((\Theta_j-1)\alpha)\geq v_n(\alpha)+p^{n-j}b_j$ for all
$\alpha\in K_n$.  As a result, we see that for $1\leq s\leq p-1$, 
\begin{multline*}
v_n\left (\binom{-\mu_{k,j}}{s}(\Theta_{j}-1)^{s-1}I_{j}\prod_{i=j}^n\binom{X_{i}+\kappa_i}{a_{(n-i)}}\right)\geq \\sp^{n-j}(b_k-b_{j})+(s-1)p^{n-j}b_{j}+v_n(I_{j})+v_n\left (\prod_{i=j}^n\binom{X_{i}+\kappa_i}{a_{(n-i)}}\right ).
\end{multline*}
Note the right-hand side is minimized by $s=1$. As a result, using
Proposition \ref{prop-main} for each $k<j$, we have
\begin{multline*}
\Theta_{j}^{[-\mu_{k,j}]}\prod_{i=j}^n\binom{X_{i}+\kappa_i}{a_{(n-i)}}=
\sum_{s=0}^{p-1}\binom{-\mu_{k,j}}{s} \Psi_{j}^{s}
\prod_{i=j}^n\binom{X_{i}+\kappa_i}{a_{(n-i)}}\\
\equiv \prod_{i=j+1}^n\binom{X_{i}+\kappa_i}{a_{(n-i)}}\sum_{s=0}^{a_{(n-j)}}\binom{-\mu_{k,j}}{s}\binom{X_{j}+\kappa_{j}}{a_{(n-j)}-s}
\bmod \binom{-\mu_{k,j}}{1}
I_{j}
\prod_{i=j}^n\binom{X_{i}}{a_{(n-i)}}.
\end{multline*}
Vandermonde's Convolution Identity yields, 
modulo $\mu_{k,j} I_{j} \prod_{i=j}^n\binom{X_{i}}{a_{(n-i)}}$,
$$\Theta_{j}^{[-\mu_{k,j}]}\prod_{i=j}^n\binom{X_{i}+\kappa_i}{a_{(n-i)}} \equiv
\prod_{i=j+1}^n\binom{X_{i}+\kappa_i}{a_{(n-i)}}\cdot
\binom{X_{j}+\kappa_{j}-\mu_{k,j}}{a_{(n-j)}},$$
which holds for all $k<j\leq n$.
Since $\Theta_j$ fixes $X_i$ for $i<j$, this means that 
\begin{multline}\label{inductA}
\Theta_{j}^{[-\mu_{k,j}]}\prod_{i=k}^n\binom{X_{i}+\kappa_i}{a_{(n-i)}} \equiv\\
\prod_{i=j+1}^n\binom{X_{i}+\kappa_i}{a_{(n-i)}}\cdot
\binom{X_{j}+\kappa_{j}-\mu_{k,j}}{a_{(n-j)}-i}\cdot
\prod_{i=k}^{j-1}\binom{X_{i}+\kappa_i}{a_{(n-i)}} \\ \bmod
\mu_{k,j} I_{j} \prod_{i=k}^n\binom{X_{i}}{a_{(n-i)}}.
\end{multline}

Note that, in general, we may consider $\kappa_i'=\kappa_{i}-\mu_{k,i}$ to be
another $\kappa_{i}$. As a result, by repeated use of (\ref{inductA}),
once for each value of $j$ in $k<j\leq n$, we find that
\begin{multline*}
\prod_{j=k+1}^n\Theta_j^{[-\mu_{k,j}]}
\cdot \prod_{i=k}^n\binom{X_{i}+\kappa_{i}}{a_{(n-i)}}
\equiv \prod_{j=k+1}^{n}\binom{X_{j}+\kappa_{j}-\mu_{k,j}}{a_{(n-j)}}\cdot \binom{X_{k}+\kappa_{k}}{a_{(n-k)}}\\
\bmod \left (\sum_{j=k+1}^n
\mu_{k,j}I_{j}\right )\prod_{i=k}^n\binom{X_{i}}{a_{(n-i)}}
\end{multline*}
Notice that the order in which we apply these
$\Theta_j^{[-\mu_{k,j}]}$ does not matter. See Remark \ref{non-abel}.
In any case, if we keep the ordering used in Definition \ref{Theta},
we find
\begin{multline*}
\Theta_{k}
\prod_{i=k}^{n}\binom{X_{i}+\kappa_{i}}{a_{(n-i)}}
\equiv \prod_{j=k+1}^{n}\binom{X_{j}+\kappa_{j}+\epsilon_{k,j}}{a_{(n-j)}}
\cdot \binom{X_{k}+\kappa_{k}+1+\epsilon_{k,k}}{a_{(n-k)}}\\
\bmod \left (\sum_{j=k+1}^n
\mu_{k,j}I_{j}\right )\prod_{i=k}^n\binom{X_{i}}{a_{(n-i)}},\\
\equiv \prod_{j=k+1}^{n}\binom{X_{j}+\kappa_{j}}{a_{(n-j)}}
\cdot \binom{X_{k}+\kappa_{k}+1}{a_{(n-k)}}\\
\bmod \left (
\sum_{j=k}^{n}\epsilon_{k,j}X_{j}^{-1}\euO_n+
\sum_{j=k+1}^n
\mu_{k,j}I_{j}\right )\prod_{i=k}^n\binom{X_{i}}{a_{(n-i)}}.
\end{multline*}
Using $\Psi_k=\Theta_k-1$, Lemma \ref{I_r} and Pascal's Identity, 
the result holds for $j=k$.
\end{proof}

\begin{remark}\label{non-abel}
  The proof of Proposition \ref{prop-main} does not depend
upon the ordering of the factors in $\Theta_{j}$.  
Since the Galois group may be nonabelian, this is noteworthy.
\end{remark}

\section{Elementary abelian $p$-extensions with Galois scaffold} 
\label{elem-ab-GS} 

In this section, we determine conditions that are sufficient for a
totally ramified, elementary abelian extension $L/K$ of degree $p^n$
to satisfy Theorem \ref{main} and thus have a  $K[G]$-scaffold  on $L$.
The main result of this section, Theorem \ref{elem-abel},
extends, from characteristic $p$ to characteristic $0$, the main
result of \cite{elder:scaffold}.

\subsection{Cyclic extensions of degree $p$ in characteristic $0$}

\begin{theorem}\label{C_p}
Let $K$ be a characteristic $0$ local field with perfect residue field
of characteristic $p$, and let $L/K$ be a totally ramified cyclic
extension of degree $p$.  Let the ramification number $u$ for $L/K$
be relatively prime to $p$. (Recall the discussion preceding
Assumption \ref{ass1}.) Then the hypotheses of Theorem \ref{main} hold
and there is a Galois scaffold with precision $\euc=pv_K(p)-(p-1)u\geq
1$.
\end{theorem}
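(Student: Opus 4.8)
The plan is to apply Theorem~\ref{main} with $n = 1$. Here $K_0 = K$, $K_1 = L$, $G = \Gal(L/K) = \langle \sigma \rangle$ is cyclic of order $p$, Choice~\ref{choice1} is forced ($\sigma_1 = \sigma$), and the unique lower ramification number $b_1$ equals $u$. Assumption~\ref{ass1} is then the hypothesis $p \nmid u$, and Assumption~\ref{ass2} is automatic. In the notation of \eqref{sigmaX} one takes the main term $\mu_{1,1} = 1$, so that $\Theta_1 = \sigma$ and $\Psi_1 = \sigma - 1$, and the inequality of Assumption~\ref{ass3} collapses, for $n = i = j = 1$, to $v_L(\epsilon_{1,1}) \geq \euT$. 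So it suffices to exhibit $X_1 \in L$ with $v_L(X_1) = -u$ and $v_L\big((\sigma - 1)X_1 - 1\big) \geq \euT$, and to verify $\euT := p\,v_K(p) - (p-1)u \geq 1$.

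For the estimate I would begin from an Artin--Schreier generator. Since $p \nmid u$, there is $X \in L$ with $v_L(X) = -u$ and $X^p - X = \beta \in K$ (the remark following Choice~\ref{choice2}; MacKenzie--Whaples in characteristic~$0$). Put $\delta = (\sigma - 1)X$; by the equality in Example~\ref{ex1}, $v_L(\delta) = v_L(X) + u = 0$, so $\delta \in \euO_L^\times$. Applying $\sigma$ to $X^p - X = \beta$ and subtracting gives $\delta = (X + \delta)^p - X^p$, and dividing by $\delta$,
$$ 1 = \delta^{p-1} + pX^{p-1} + \sum_{j=1}^{p-2} \binom{p}{j+1} X^{p-1-j}\delta^j. $$
Because $L/K$ is totally ramified of degree $p$, $v_L(pX^{p-1}) = p\,v_K(p) - (p-1)u = \euT$, while each term of the sum has valuation $\euT + ju > \euT$; hence $v_L(\delta^{p-1} - 1) = \euT$. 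Since $\delta^{p-1} - 1 \in \euO_L$ this gives $\euT \geq 0$, and $\euT = 0$ would force $p\,v_K(p) = (p-1)u$, so $p \mid u$ (as $\gcd(p, p-1) = 1$), contrary to hypothesis. Therefore $\euT \geq 1$.

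In particular $\bar\delta^{p-1} = 1$ in the residue field $\kappa$, so Hensel's lemma provides $\omega \in \euO_K^\times$ with $\omega^{p-1} = 1$ and $\bar\omega = \bar\delta$. Set $X_1 = \omega^{-1}X$; then $v_L(X_1) = -u$, $X_1$ again satisfies an Artin--Schreier equation (since $\omega^p = \omega$, so $X_1^p - X_1 = \omega^{-1}\beta$), and $\delta_1 := (\sigma - 1)X_1 = \omega^{-1}\delta$ has $\bar\delta_1 = 1$ and $\delta_1^{p-1} = \delta^{p-1}$. Writing $\delta_1 = 1 + \xi$ with $v_L(\xi) > 0$ and expanding $(1 + \xi)^{p-1} - 1 = (p-1)\xi + \binom{p-1}{2}\xi^2 + \cdots$, whose linear term dominates as $p - 1$ is a unit and $v_L(\xi^k) > v_L(\xi)$ for $k \geq 2$, I obtain $v_L(\xi) = v_L(\delta_1^{p-1} - 1) = \euT$, i.e.\ $v_L((\sigma - 1)X_1 - 1) = \euT > 0$. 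Thus $X_1$ is a legitimate Choice~\ref{choice2}, $\epsilon_{1,1} = (\sigma - 1)X_1 - 1$ has valuation $\euT$, Assumption~\ref{ass3} holds with tolerance $\euT$, and Theorem~\ref{main} yields a Galois scaffold with shift parameter $b_1 = u$ and tolerance $\euT = p\,v_K(p) - (p-1)u$.

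The computation is routine; the only point that needs care is the interface with the cited facts --- that an Artin--Schreier generator with valuation exactly $-u$ exists, which is precisely where the hypothesis $p \nmid u$ enters (parallel to the extension excluded in the discussion before Assumption~\ref{ass1}), and that passing to the normalization $v_L((\sigma - 1)X_1 - 1) > 0$ of Choice~\ref{choice2} is free, since rescaling by the $(p-1)$-st root of unity $\omega$ changes neither $v_L(X_1)$ nor $v_L(\delta_1^{p-1} - 1)$. I do not anticipate any deeper obstacle.
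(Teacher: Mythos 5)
Your proof is correct and follows essentially the same route as the paper's: take an Artin--Schreier generator of valuation $-u$, expand the equation $\wp(\sigma x)=\wp(x)$ to identify the leading term $px^{p-1}$, and read off $\euT=pv_K(p)-(p-1)u$ as the valuation of the error term $\epsilon_{1,1}$. The only difference is cosmetic: the paper invokes the cited Fesenko proposition to obtain a generator already normalized so that $v_L((\sigma-1)x-1)>0$, whereas you start from an unnormalized generator, derive $\delta^{p-1}-1=-pX^{p-1}-(\text{higher order})$, and then recover the normalization by hand via the Teichm\"uller $(p-1)$-st root of unity $\omega$; this reproduces the same estimate at the cost of a little extra work that the citation would have spared you.
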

\begin{proof}
Since there is only one break in the ramification filtration, the
lower and upper ramification numbers are the same $b=u$.  Since
$\gcd(p,u)=1$, $L=K(x)$ for some $x$ with $-pv_K(p)/(p-1)<v_L(x)=-u<0$
satisfying an Artin-Schreier equation $\wp(x):=x^p-x\in K$ with
$(\sigma-1) x-1=\delta\in\euP_L$ \cite[III \S2.5 Prop]{fesenko}.
Expand $\wp(\sigma x)=\wp(x)$ to find that
$\sum_{k=1}^{p-1}\binom{p}{k}x^k(1+\delta)^{p-k}\equiv \delta\bmod
\delta\euP_L$. Since $v_L(x)=-u<0$, this means that
$v_L((\sigma-1)x-1)=v_L(\delta)=v_L(px^{p-1})=pv_K(p)-(p-1)u$.  In the
notation of \S\ref{suf-sect}, we have $K_1=L$, $K_0=K$, $X_1=x$ and
$\sigma_1=\sigma$ where $(\sigma_1-1)X_1=\mu_{1,1}+\epsilon_{1,1}$
with $\mu_{1,1}=1$ and $\epsilon_{1,1}=\delta$.  The extension
satisfies Assumptions \ref{ass1}, \ref{ass2} and \ref{ass3} with
precision $\euc=v_L(\delta)$.
\end{proof}

\subsection{Elementary abelian $p$-extensions}\label{elem-ab-main} 
Since the description of the extensions requires a few paragraphs to
develop, we introduce the extensions and state the main theorem first.  We leave
the proofs till \S\ref{choice-X} and \S\ref{val-X}.

Let $K$ be a complete local field whose residue field is perfect of
characteristic $p>0$. Let $L/K$  be a totally ramified,
elementary abelian extension of degree $p^n$, $n>1$.
Again we
change notation so that $L/K=K_n/K_0$.  Fix a composition series
$\{H_i\}$ that refines the ramification filtration of the elementary abelian
group $G=\Gal(K_n/K_0)\cong C_p^n$. Thus $\{H_i\}$ yields elements
$\sigma_i\in G$, lower ramification numbers $b_i$, and upper
ramification numbers $u_i$ via \eqref{upper}, as in \S\ref{suf-sect}.
Restrict these upper ramification numbers as follows.
\begin{assumption}\label{elem-ab-1}
$p\nmid u_1$ and $u_i\equiv u_1\bmod p^{n-1}$ for all $1\leq i\leq n$.
\end{assumption}
Our extension now satisfies Assumptions \ref{ass1} and \ref{ass2}, and restrictions are imposed on the Artin-Schreier
generators of the extension: Let $K_{(i)}$ be the subfield that is
fixed by $\langle \sigma_j:j\neq i\rangle$.  Then because $u_i$ is the
ramification number for $K_{(i)}/K_0$ and $p\nmid u_i$, we have
$K_{(i)}=K_0(x_i)$ where $x_i$ satisfies an Artin-Schreier equation
$\wp(x_i)=x_i^p-x_i=\alpha_i\in K_0$, with $v_{(i)}(x_i)=-u_i$ and
$v_{(i)}((\sigma_i-1)x_i-1)>0$ \cite[III \S2.5 Prop]{fesenko}.
Following the proof of Theorem \ref{C_p},
\begin{equation}\label{sigma-x}
v_{(i)}((\sigma_i-1)x_i-1)=pv_0(p)-(p-1)u_i.
\end{equation}

Let $\beta=\alpha_1$. So $v_0(\beta)=-u_1=-b_1$.
Since $v_0(\alpha_i)=-u_i\equiv-u_j= v_0(\alpha_j)\bmod p^{n-1}$ for
all $i,j$, 
there are $\omega_i\in K_0$ with $\omega_1=1$ and $v_0(\omega_n)\leq
v_0(\omega_{n-1})\leq \cdots \leq v_0(\omega_1)=0$, such that
$\omega_i^{p^{n-1}}\beta\equiv \alpha_i\bmod \alpha_i\euP_0$ for $2\leq i\leq n$.
(Here we have used the fact that the residue field of $K_0$ is
perfect.) Thus
\begin{equation}\label{AS-equations}
\wp(x_i)=\omega_i^{p^{n-1}}\beta+\epsilon_i
\end{equation} for some ``error terms''
$\epsilon_i\in K_0$ with $\epsilon_1=0$ and $v_0(\epsilon_i)> -u_i$.
Note that whenever
$v_0(\omega_i)=v_0(\omega_{i+1})=\cdots=v_0(\omega_j)$ with $i<j$,
$K_0(x_i,\ldots, x_j)/K_0$ has only one ramification number
$u_i=u_j$. As a result, the projections of $\omega_j, \ldots
,\omega_i$ into $\omega_i\euO_0/\omega_i\euP_0$ must be linearly
independent over $\mathbb{F}_p$, the finite field with $p$ elements.
Conversely, given any $\beta$, $\omega_i$, $\epsilon_i$ as above, and
$x_i$ satisfying \eqref{AS-equations},
$K_n=K_0(x_1,x_2,\ldots ,x_n)$ will be a totally ramified elementary abelian
extension of degree $p^n$ with upper ramification numbers $\{u_j:1\leq
j\leq n\}$ satisfying
Assumption \ref{elem-ab-1}.

We now need to subject our extension $K_n/K_0$ to two further restrictions:
First, we ask that the error terms be negligible. Second, we ask that the absolute ramification be relatively large.
To make this precise,
we need further notation:
Let $m_i=v_0(\omega_{i-1})-v_0(\omega_i)\geq 0$ for $i\geq 2$. So
$v_0(\omega_i)=-\sum_{k=2}^im_k$. Note 
$u_i=b_1+p^{n-1}\sum_{k=2}^im_k$ and
$b_i=b_1+p^n\sum_{k=2}^ip^{k-2}m_k$.
Set $C_0=0$, and for $1\leq i\leq n$, 
define $$C_i=u_i-b_i/p^i.$$
Check that $C_i=u_{i+1}-b_{i+1}/p^i$ for $0\leq i\leq n-1$.
Since $C_i=u_{i+1}-b_{i+1}/p^i<u_{i+1}-b_{i+1}/p^{i+1}=C_{i+1}$, the sequence $\{C_i:0\leq i\leq n\}$ is increasing.
The two further restrictions are:
\begin{assumption}
\label{ass-epsilon}  
$v_0(\epsilon_i)>-u_i+C_{n-1}$.
\end{assumption}
\begin{assumption}\label{strong-ass-p} 
$v_0(p)\geq C_n+\euc/p^n$ with $\euc\geq 1$.
\end{assumption}
These assumptions enable us to prove:
\begin{theorem}\label{elem-abel}
Let $K_0$ be a complete local field whose residue field is perfect of
characteristic $p>0$.  Let $K_n/K_0$ be a totally ramified, elementary
abelian extension of degree $p^n$, $n>1$ that satisfies Assumptions
\ref{elem-ab-1}, \ref{ass-epsilon}, \ref{strong-ass-p} with $\euc\geq
1$, and has ramification multiset $\{b_1,b_2,\ldots , b_n\}$. Then the
hypotheses of Theorem \ref{main} hold and we have a scaffold for the
$K_0[G]$-action on $K_n/K_0$ of precision $\euc$, with shift parameters
$b_1,b_2,\ldots , b_n$.
\end{theorem}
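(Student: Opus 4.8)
The plan is to deduce Theorem \ref{elem-abel} from Theorem \ref{main} by verifying, for the elementary abelian extension $K_n/K_0$ presented via the Artin-Schreier equations \eqref{AS-equations}, that Choices \ref{choice1} and \ref{choice2} can be made so that Assumptions \ref{ass1}, \ref{ass2} and \ref{ass3} hold with the stated tolerance $\euT$. Choice \ref{choice1} is already fixed by the composition series $\{H_i\}$ refining the ramification filtration and the elements $\sigma_i$; since $p\nmid u_1=b_1$ by Assumption \ref{elem-ab-1}, Assumption \ref{ass1} is immediate, and since $u_i\equiv u_1\bmod p^{n-1}$ one checks (as the text already notes following Assumption \ref{elem-ab-1}) that $b_i\equiv b_1\bmod p^n$, giving Assumption \ref{ass2}. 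The real work is Choice \ref{choice2} — producing the generators $X_j\in K_j$ with $v_j(X_j)=-b_j$ and $v_j((\sigma_j-1)X_j-1)>0$ — and then estimating the error terms $\epsilon_{i,j}$ in \eqref{sigmaX} precisely enough to meet Assumption \ref{ass3}.

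First I would construct the $X_j$ out of the Artin-Schreier generators $x_1,\ldots,x_n$ of the subfields $K_{(i)}$. The natural guess, motivated by \cite{elder:scaffold} and by the fact that $v_{(i)}(x_i)=-u_i$ while we need $v_j(X_j)=-b_j=-p^{j-?}\cdots$, is to take $X_j$ to be (a unit times) a product of the $x_i$ with $i\le j$, raised to suitable $p$-power exponents dictated by the $m_k=v_0(\omega_{i-1})-v_0(\omega_i)$, exactly so that $v_j(X_j)=b_1+p^n\sum_{k=2}^{j}p^{k-2}m_k$ matches $-b_j$ after sign correction. One then shows $v_j((\sigma_j-1)X_j-1)>0$ using \eqref{sigma-x}: $(\sigma_j-1)x_j$ has leading term $1$, and the extra factors in $X_j$ coming from $x_i$ with $i<j$ are fixed by $\sigma_j$ (since $\sigma_j$ fixes $K_{(i)}$ for $i\neq j$ — here one must be careful, as $K_j$ and $K_{(i)}$ are different subfields, so the precise claim is about the action of $\sigma_j$ on the chosen product). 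This is \S\ref{choice-X} of the paper. Then in \S\ref{val-X} one computes $v_n(\mu_{i,j})=b_i-b_j$ (automatic from Choice \ref{choice2}) and, crucially, lower bounds $v_n(\epsilon_{i,j})$.

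The main obstacle, and the heart of the proof, is this error-term estimate. The error $\epsilon_{i,j}$ has two sources: the ``true'' error terms $\epsilon_i$ in the Artin-Schreier equations \eqref{AS-equations}, controlled by Assumption \ref{ass-epsilon} ($v_0(\epsilon_i)>-u_i+C_{n-1}$), and the error introduced by the nonlinearity of truncated exponentiation and of the binomial-coefficient expansions — this second source is where the absolute ramification $v_0(p)$ enters, since denominators $i!$ for $i<p$ are fine but the interaction of the Artin-Schreier structure with characteristic $0$ produces terms with a factor of $p$, controlled by Assumption \ref{strong-ass-p} ($v_0(p)\ge C_n+\euT/p^n$). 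The goal is to show
$$v_n(\epsilon_{i,j})-v_n(\mu_{i,j})\ge (p-1)\sum_{k=1}^{i-1}p^{n-k-1}b_k+(p^{n-i}-p^{n-j})b_i+\euT,$$
i.e. $\ge p^{n-1}u_i-p^{n-j}b_i+\euT$ by \eqref{upper}. I expect the right-hand side to be re-expressed, using $u_i=b_1+p^{n-1}\sum_{k=2}^{i}m_k$ and the definition $C_i=u_i-b_i/p^i$, in terms that make transparent why $C_{n-1}$ suffices to absorb the $\epsilon_i$ contribution (scaled up from $K_0$ to $K_n$ by a factor $p^n$ and compared against the $p^{n-1}u_i$ term) and why $C_n$ together with the $\euT/p^n$ slack handles the characteristic-$0$ contribution from $p$ (again scaled by $p^n$). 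In other words, the content of Assumptions \ref{ass-epsilon} and \ref{strong-ass-p} is precisely tuned — via the telescoping identities for $u_i$, $b_i$, $C_i$ already laid out before the theorem — to make Assumption \ref{ass3} hold with tolerance exactly $\euT$. Once that inequality is in hand, Theorem \ref{main} applies verbatim and delivers the scaffold with shift parameters $b_1\le\cdots\le b_n$.
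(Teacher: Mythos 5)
Your high-level framing is right: Theorem \ref{elem-abel} is indeed proved by exhibiting elements $X_j$ fulfilling Choice \ref{choice2} and then verifying Assumptions \ref{ass1}, \ref{ass2}, \ref{ass3} so that Theorem \ref{main} applies. Your verification of Assumptions \ref{ass1} and \ref{ass2} from Assumption \ref{elem-ab-1} is also fine. But the two hard pieces of the proof are wrong or missing.

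First, the construction of $X_j$. You propose taking $X_j$ to be ``a product of the $x_i$ with $i\le j$, raised to suitable $p$-power exponents.'' That is not the paper's construction, and it cannot work. The paper's $X_j=X_{j,j}$ is built by the recursive \emph{linear} elimination
\[
   X_{i,j}=X_{i-1,j}-\Omega_{i-1,j}^{\,p^{n-i}}X_{i-1,i-1},
\]
i.e.\ by a change of basis governed by the unipotent matrix $(\mathbf{\Omega})$ in \eqref{matrix-eq}. The reason linear combinations (rather than products) are forced is that Choice \ref{choice2} requires $(\sigma_j-1)X_j\equiv 1\bmod\euP_j$. For a product $X_j=\prod_{i\le j}x_i^{c_i}$ one would have $(\sigma_j-1)X_j\approx c_j\,x_j^{c_j-1}\prod_{i<j}x_i^{c_i}$, which has large negative valuation, not valuation $0$; the forward-difference/Artin--Schreier structure behaves additively, not multiplicatively. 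By contrast, the linear combination keeps $(\sigma_j-1)X_j$ equal to a scalar (up to error), which is exactly what Lemma \ref{assum3} exploits via $(\mathbf{\Omega})^{-1}$. So the product ansatz is a genuine dead end, not a variant of the paper's argument.

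Second, the valuation/error estimate. You describe the goal inequality for Assumption \ref{ass3} and assert it should follow because the constants $C_{n-1}$ and $C_n$ are ``precisely tuned,'' but you give no mechanism. In the paper this splits into two very different pieces. Lemma \ref{assum3} shows that \emph{if} $v_j(X_{j,j})=-b_j$, then Assumption \ref{strong-ass-p} directly gives Assumption \ref{ass3}; this part is short because $(\mathbf{\Omega})^{-1}$ makes $\epsilon_{i,j}=\mu_{i,j}\bigl((\sigma_i-1)x_i-1\bigr)$ and one just cites \eqref{sigma-x}. But establishing $v_j(X_{j,j})=-b_j$ is the bulk of the work (Proposition \ref{v-Xi}), and it requires introducing the auxiliary elements $B_i$, $E_{i,j}$, $M_{i,j}$, $L_{i,j}$ of \eqref{B-old}--\eqref{L}, the expansion Lemma \ref{extended-E}, the $p$-adic estimate Lemma \ref{lemma-L} on the ``characteristic-$0$ defect'' $\wp(\Omega)^{p^t}-\wp(\Omega^{p^t})$ (via Kummer's carry theorem), and a double induction in $(i,j)$. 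Your proposal does not notice that $v_j(X_j)=-b_j$ is itself a nontrivial claim; it treats it as built into the construction, whereas in fact the whole of \S\ref{val-X} is devoted to proving it and this is where Assumption \ref{ass-epsilon} actually enters. Without that, you cannot invoke Lemma \ref{assum3} and the proof does not close.
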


To prove this theorem we must, in the notation of \S\ref{suf-sect},
choose elements $X_j\in K_j$ with $v_j(X_j)=-b_j$, as required for
Choice \ref{choice2} so that the difference
$v_j(\epsilon_{i,j})-v_j(\mu_{i,j})$, where
$(\sigma_i-1)X_j=\mu_{i,j}+\epsilon_{i,j}$ as in \eqref{sigmaX},
satisfies Assumption \ref{ass3} with precision $\euc\geq 1$. We define the
$X_j$ in \S\ref{choice-X}. Interestingly, if we assume
$v_j(X_j)=-b_j$, the proof that Assumption \ref{ass3} is satisfied
with precision $\euc\geq 1$ is relatively easy. It appears in
\S\ref{choice-X}, as Lemma \ref{assum3}. The proof that
$v_j(X_j)=-b_j$ is much more involved and appears afterwards, in
\S\ref{val-X}.

\begin{remark}\label{improvements}
In characteristic $p$, Assumption \ref{strong-ass-p} is vacuous, which
is why it did not appear in \cite{elder:scaffold}. Otherwise,
everything in \S\ref{elem-ab-main} is consistent with
\cite{elder:scaffold}. Indeed, Assumption \ref{ass-epsilon} is, after
small changes in notation, exactly \cite[(5)]{elder:scaffold}.
\end{remark}

\subsection{Candidate for Choice \ref{choice2}}
\label{choice-X}
Let $\Omega_{1,j}=\omega_j$, $X_{1,j}=x_j$.
For $2\leq i\leq j\leq n$, recursively define
\begin{eqnarray}
\Omega_{i,j}&=&\wp(\Omega_{i-1,j})/\wp(\Omega_{i-1,i})\in K_0; \mbox{
  in particular, } \Omega_{j,j}=1 \mbox{ for all } j; \label{Omega} \\
X_{i,j}&=&X_{i-1,j}-\Omega_{i-1,j}^{p^{n-i}}X_{i-1,i-1}\in K_{i-1}(x_j)\subseteq K_j; \label{X}
\end{eqnarray}
The following result proves that the $\Omega_{i,j}$, and thus the $X_{i,j}$, are well-defined.
\begin{lemma} \label{v-Omega}
For $2\leq i\leq n$ we have $\wp(\Omega_{i-1,i}) \neq 0$. Furthermore, for $1\leq i\leq j\leq n$
$$v_0(\Omega_{i,j})=-p^{i-1}\sum_{k=i+1}^jm_k=p^{i-n}(u_i-u_j).$$
\end{lemma}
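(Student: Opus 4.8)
The plan is to prove both assertions simultaneously by induction on $i$. The base case $i=1$ is immediate: $\Omega_{1,j}=\omega_j$, so $v_0(\Omega_{1,j})=v_0(\omega_j)=-\sum_{k=2}^j m_k = -p^0\sum_{k=2}^j m_k$, and since $u_i = b_1 + p^{n-1}\sum_{k=2}^i m_k$ we get $p^{1-n}(u_1-u_j) = -\sum_{k=2}^j m_k$ as well, so the formula holds. For the inductive step, assume the valuation formula holds for $\Omega_{i-1,\cdot}$ (for all second indices). First I would verify $\wp(\Omega_{i-1,i})\neq 0$: by the inductive hypothesis $v_0(\Omega_{i-1,i}) = -p^{i-2}\sum_{k=i}^{i} m_k \cdot(\text{careful indexing})$ — more precisely $v_0(\Omega_{i-1,i}) = -p^{i-2}\sum_{k=i}^{i}\!m_k$ should be re-read as $v_0(\Omega_{i-1,i}) = -p^{i-2} m_i$ when $j=i$, wait: the formula gives $v_0(\Omega_{i-1,i}) = -p^{i-2}\sum_{k=(i-1)+1}^{i} m_k = -p^{i-2}m_i$. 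If $m_i>0$ then $v_0(\Omega_{i-1,i})<0$, so $\wp(\Omega_{i-1,i}) = \Omega_{i-1,i}^p - \Omega_{i-1,i}\neq 0$ because its valuation is $p\cdot v_0(\Omega_{i-1,i})<0$. If $m_i=0$, one must argue separately: in that case $v_0(\omega_{i-1})=v_0(\omega_i)=\cdots$, and the remark following \eqref{AS-equations} says the residues of the relevant $\omega$'s are $\mathbb{F}_p$-linearly independent; this should force $\Omega_{i-1,i}\notin\mathbb{F}_p$ in the residue field, hence $\wp(\Omega_{i-1,i})\neq 0$ (as $\wp$ kills exactly $\mathbb{F}_p$ on the residue field, and $\Omega_{i-1,i}$ being a unit with non-$\mathbb{F}_p$ residue has $\wp$-image a unit). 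I expect this $m_i=0$ subcase to be the main obstacle, since it requires unwinding the construction of the $\Omega$'s back to the linear-independence statement rather than a bare valuation count.

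Next, with $\wp(\Omega_{i-1,i})\neq 0$ established, I compute $v_0(\wp(\Omega_{i-1,i}))$ and $v_0(\wp(\Omega_{i-1,j}))$. When the relevant $\Omega_{i-1,\cdot}$ has negative valuation $v$, we have $v_0(\wp(\Omega_{i-1,\cdot})) = pv$; when it has valuation $0$ (the $m_i=0$ regime, where $\Omega_{i-1,\cdot}$ is a unit), $\wp$ of it is again a unit so the valuation is $0$. In either regime the identity $v_0(\wp(z)) = \min(pv_0(z),\,0)$ when $\wp(z)\neq 0$ and $v_0(z)\le 0$ handles it uniformly once we know $v_0(\Omega_{i-1,\cdot})\le 0$ (which the inductive formula gives, as the $m_k\ge 0$). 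Hence
$$v_0(\Omega_{i,j}) = v_0(\wp(\Omega_{i-1,j})) - v_0(\wp(\Omega_{i-1,i})) = \min\!\bigl(pv_0(\Omega_{i-1,j}),0\bigr) - \min\!\bigl(pv_0(\Omega_{i-1,i}),0\bigr).$$
Plugging in the inductive formula $v_0(\Omega_{i-1,\ell}) = -p^{i-2}\sum_{k=i}^{\ell}m_k$ and simplifying the two $\min$'s (noting that the "$0$" branch occurs exactly when all the intervening $m_k$ vanish, in which case both terms are $0$ and so is the claimed answer), I obtain $v_0(\Omega_{i,j}) = -p^{i-1}\sum_{k=i+1}^{j}m_k$. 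This is a routine bookkeeping simplification, which I would not spell out in full.

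Finally, I reconcile this with the closed form $p^{i-n}(u_i-u_j)$: from $u_\ell = b_1 + p^{n-1}\sum_{k=2}^\ell m_k$ we get $u_i - u_j = -p^{n-1}\sum_{k=i+1}^{j} m_k$, so $p^{i-n}(u_i-u_j) = -p^{i-1}\sum_{k=i+1}^{j}m_k$, matching. In particular $\Omega_{j,j}=1$ follows since the empty sum gives $v_0(\Omega_{j,j})=0$, consistent with the recursive definition $\Omega_{j,j}=\wp(\Omega_{j-1,j})/\wp(\Omega_{j-1,j}) = 1$ directly. This completes the induction. The only genuinely delicate point, as flagged, is the non-vanishing of $\wp(\Omega_{i-1,i})$ in the degenerate case $m_i=0$, where one leans on the perfectness of the residue field and the $\mathbb{F}_p$-independence of the residues of the $\omega$'s guaranteeing that the iterated $\wp$-quotients never land in $\mathbb{F}_p$.
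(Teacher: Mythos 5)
Your overall strategy (induction on $i$) matches the paper's, and you have correctly located the delicate point. But the proposal leaves exactly that point as an unfilled gap, and along the way states a false lemma. Concretely: the identity $v_0(\wp(z)) = \min(p\,v_0(z),0)$ does \emph{not} follow from ``$\wp(z)\neq 0$ and $v_0(z)\le 0$.'' Take $z=1+\pi_0$: then $v_0(z)=0$, $\wp(z)\neq 0$, but $v_0(\wp(z))>0$ since the residue $\overline{z}=1$ lies in $\mathbb{F}_p$. What you actually need, at each stage, is that whenever $v_0(\Omega_{i-1,j})=0$ the residue $\overline{\Omega}_{i-1,j}$ does \emph{not} lie in $\mathbb{F}_p$; and you need this not just for $j=i$ (the one case you address) but for every $j$ appearing in the quotient $\Omega_{i,j}=\wp(\Omega_{i-1,j})/\wp(\Omega_{i-1,i})$.

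The way the paper makes the induction close is by \emph{strengthening the induction hypothesis}: not just the valuation formula, but also the assertion that $v_0(\Omega_{i,n})\le \cdots \le v_0(\Omega_{i,i})=0$, together with the statement that whenever some $v_0(\Omega_{i,j})=0$ with $j>i$, the residues $\overline{\Omega}_{i,j},\dots,\overline{\Omega}_{i,i}=1$ are $\mathbb{F}_p$-linearly independent. This linear-independence clause propagates through \eqref{Omega} precisely because $\wp$ is $\mathbb{F}_p$-linear on the residue field with kernel $\mathbb{F}_p$, and the constant $1=\overline{\Omega}_{i,i}$ is always in the independent set, so applying $\wp$ kills it and preserves independence of the remaining residues. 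Your proposal gestures at ``unwinding back to the $\omega$'s'' but never formulates the auxiliary statement that must be carried along; without it, there is nothing to invoke for $\overline{\Omega}_{i-1,j}\notin\mathbb{F}_p$ at level $i-1\ge 2$, because the given linear independence hypothesis is only about the initial $\omega_j$'s. The fix is to replace your single-clause induction by the two-clause one above; the bookkeeping that follows is then exactly as you describe, since in all cases $v_0(\wp(\Omega_{i-1,j}))=p\,v_0(\Omega_{i-1,j})$ becomes available.
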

\begin{proof}
To obtain the first assertion, we show that for $1 \leq i \leq n$ we
have $v_0(\Omega_{i,n})\leq v_0(\Omega_{i,n-1})\leq \cdots\leq
v_0(\Omega_{i,i})=0$, and that if $v_0(\Omega_{i,j})=0$ for some $j>i$,
then the projections $\overline{\Omega}_{i,j }, \ldots ,
\overline{\Omega}_{i,i}$ of $\Omega_{i,j }, \ldots , \Omega_{i,i}$ in
$\euO_n/\euP_n$ are linearly independent over $\bF_p$. These assertions
hold for $i=1$ since $\Omega_{1,j}=\omega_j$ with $\omega_1=1$. Assume
inductively that they hold for $i=k-1 \geq 1$. Since
$\Omega_{k-1,k-1}=1$, either $v_0(\Omega_{k-1,k})<0$, or
$v_0(\Omega_{k-1,k})=0$ with
$\overline{\Omega}_{k-1,k}\not\in\mathbb{F}_p$. In either case,
$\wp(\Omega_{k-1,k})\neq 0$, and indeed
$v_0(\wp(\Omega_{j,k}))=pv_0(\Omega_{j,k})$ for all $j<k$.  Furthermore, if
$v_0(\Omega_{k-1,j})=0$ with $j>k-1$ then $v_0(\wp(\Omega_{k-1,j})) =
\cdots = v_0(\wp(\Omega_{k-1,k}))=0$. Also, 
$\wp(\overline{\Omega}_{k-1,j}), \ldots,
\wp(\overline{\Omega}_{k-1,k})$ are linearly independent over $\bF_p$
because $\overline{\Omega}_{k-1,j}, \ldots, \overline{\Omega}_{k-1,k-1}=1$ are
linearly independent and $\wp$ is $\bF_p$-linear with kernel $\bF_p$.
It then follows from \eqref{Omega} that our
assertions hold for $i=k$. This completes the proof that
$\wp(\Omega_{i-1,i}) \neq 0$ for $2\leq i\leq n$.  The formula for
  $v_0(\Omega_{i,j})$ is then easily verified by induction, using
\eqref{Omega}, the definition of the $m_k$, and the fact that 
$v_0(\wp(\Omega_{j,k}))=pv_0(\Omega_{j,k})$ if $j<k$. 
\end{proof}

Using \eqref{X} repeatedly, we find that
$X_{j,j}=X_{1,j}-\sum_{s=1}^{j-1}\Omega_{s,j}^{p^{n-s-1}}X_{s,s}$, or
$x_j=X_{1,j}=X_{j,j}+\sum_{s=1}^{j-1}\Omega_{s,j}^{p^{n-s-1}}X_{s,s}$.
In other words, we have the
matrix equation
$(X_{1,1},X_{2,2},\ldots, X_{n,n})\cdot (\mathbf{\Omega})=(x_1,x_2,\ldots , x_n)$
with
\begin{equation}\label{matrix-eq}
(\mathbf{\Omega})=\begin{pmatrix}1&\Omega_{1,2}^{p^{n-2}}&\Omega_{1,3}^{p^{n-2}}& \cdots &\Omega_{1,n}^{p^{n-2}}\\
0&1&\Omega_{2,3}^{p^{n-3}}& \cdots &\Omega_{2,n}^{p^{n-3}}\\
& & \ddots & &\\
0&0&\cdots& 1 &\Omega_{n-1,n}^{p^0}\\
0&0&\cdots& 0 &1
\end{pmatrix}.
\end{equation}
Clearly,
$K_i=K_0(x_1,\ldots, x_i)=K_0(X_{1,1},\ldots ,X_{i,i})$.
In the next section we will prove that $v_j(X_{j,j})=-b_j$ for $1\leq j\leq n$, so that $X_j=X_{j,j}$ provide candidates for
Choice \ref{choice2}. 
But first we derive an important consequence.

\begin{lemma}\label{assum3}
If $v_j(X_{j,j})=-b_j$ for $1\leq j\leq n$,
then we may use $X_j=X_{j,j}$ for Choice \ref{choice2}. If we do so, then Assumption 
\ref{strong-ass-p} ensures that
Assumption \ref{ass3} holds with precision $\euc\geq 1$.
\end{lemma}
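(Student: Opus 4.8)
The plan is to verify, in this order: a factorization of $(\sigma_i-1)X_{j,j}$ that isolates its error part; an estimate on $v_n(\delta_i)$, where $\delta_i:=(\sigma_i-1)x_i-1$, coming from Assumption~\ref{strong-ass-p}; that $X_j:=X_{j,j}$ is then admissible for Choice~\ref{choice2}; and that Assumption~\ref{ass3} follows. Throughout we use the hypothesis $v_j(X_{j,j})=-b_j$; the harder verification of that hypothesis is left to \S\ref{val-X}, so the work here is essentially bookkeeping with valuations.

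\emph{Factorization.} First I would show by induction on $j\ge i$ that, for $1\le i\le j\le n$,
$$(\sigma_i-1)X_{j,j}=(1+\delta_i)\,c_{i,j}\qquad\text{for some }c_{i,j}\in K_0,\quad c_{i,i}=1.$$
Apply $\sigma_i-1$ to $X_{j,j}=x_j-\sum_{s=1}^{j-1}\Omega_{s,j}^{p^{n-s-1}}X_{s,s}$. Since $\sigma_i\in H_{i-1}\subseteq H_s$ fixes $K_s\ni X_{s,s}$ for $s<i$, and $\sigma_i$ fixes $x_j\in K_{(j)}$ when $i\ne j$, only the terms with $i\le s\le j-1$ survive (for $j=i$, only the $x_i$ term, which gives $(\sigma_i-1)x_i=1+\delta_i$ and $c_{i,i}=1$). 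The term $s=i$ contributes $-\Omega_{i,j}^{p^{n-i-1}}(1+\delta_i)$, and, by induction, each term with $i<s\le j-1$ contributes $(1+\delta_i)$ times an element of $K_0$; hence $c_{i,j}=-\sum_{s=i}^{j-1}\Omega_{s,j}^{p^{n-s-1}}c_{i,s}\in K_0$. Thus the only factor in $(\sigma_i-1)X_{j,j}$ outside $K_0$ is $1+\delta_i$. By \eqref{sigma-x} and $e(K_n/K_{(i)})=p^{n-1}$,
$$v_n(\delta_i)=p^{n-1}\bigl(pv_0(p)-(p-1)u_i\bigr)=p^nv_0(p)-(p-1)p^{n-1}u_i$$
(in characteristic $p$ one has $v_0(p)=\infty$, $\delta_i=0$, and Assumption~\ref{strong-ass-p} is vacuous).

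\emph{The estimate and Choice~\ref{choice2}.} From \eqref{upper},
$$u_n-u_i=\sum_{k=i+1}^{n}\frac{b_k-b_{k-1}}{p^{k-1}}\ \ge\ \sum_{k=i+1}^{n}\frac{b_k-b_{k-1}}{p^{n}}=\frac{b_n-b_i}{p^n},$$
using $b_1\le\cdots\le b_n$ and $k-1\le n-1$; hence $u_i-b_i/p^n\le u_n-b_n/p^n=C_n$, and also $u_i\ge b_i/p^{i-1}$ directly from \eqref{upper}. So Assumption~\ref{strong-ass-p} gives $v_0(p)\ge u_i-b_i/p^n+\euT/p^n$, that is,
$$v_n(\delta_i)\ \ge\ p^{n-1}u_i-b_i+\euT\ \ge\ (p^{n-i}-1)b_i+\euT\ \ge\ \euT\ \ge\ 1>0 .$$
In particular $v_j(\delta_i)>0$ for all $1\le i\le j\le n$, so the factorization yields $(\sigma_j-1)X_{j,j}=1+\delta_j$ with $v_j(\delta_j)>0$. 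Together with the assumed $v_j(X_{j,j})=-b_j$ this shows $X_j:=X_{j,j}$ meets the requirements of Choice~\ref{choice2}; consequently, by the remark following Choice~\ref{choice2}, $v_j((\sigma_i-1)X_{j,j})=b_i-b_j$ for $1\le i\le j\le n$.

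\emph{Conclusion.} Since $v_j(\delta_i)>0$, the identity $(\sigma_i-1)X_{j,j}=c_{i,j}+c_{i,j}\delta_i$ is a decomposition of the form \eqref{sigmaX}, with $\mu_{i,j}=c_{i,j}\in K_0\subseteq K_{j-1}$ an admissible main term (so $\mu_{i,j}(\sigma_j-1)\in K_0[G]$) and $\epsilon_{i,j}=c_{i,j}\delta_i$; moreover $v_j(\mu_{i,j})=v_j(c_{i,j})=b_i-b_j<v_j(\epsilon_{i,j})$ by the previous step, and
$$v_n(\epsilon_{i,j})-v_n(\mu_{i,j})=v_n(\delta_i)\qquad(1\le i\le j\le n).$$
In the second, equivalent form of Assumption~\ref{ass3} the required bound is $p^{n-1}u_i-p^{n-j}b_i+\euT$, which is largest at $j=n$, where it reads $p^{n-1}u_i-b_i+\euT$; but that is exactly the lower bound for $v_n(\delta_i)$ established above. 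Hence Assumption~\ref{ass3} holds with tolerance $\euT$. The one step needing care is the factorization: once it is in hand, the independence of $v_n(\epsilon_{i,j})-v_n(\mu_{i,j})$ from $j$ (it always equals $v_n(\delta_i)$) reduces everything to the short computation with \eqref{upper} and Assumption~\ref{strong-ass-p} above.
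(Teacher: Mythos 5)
Your proof is correct and follows essentially the same route as the paper's: the key step is the factorization $(\sigma_i-1)X_{j,j}=\mu_{i,j}\cdot(\sigma_i-1)x_i$ with $\mu_{i,j}\in K_0$, so that the error term $\epsilon_{i,j}=\mu_{i,j}\delta_i$ has $v_n(\epsilon_{i,j})-v_n(\mu_{i,j})=v_n(\delta_i)=p^nv_0(p)-(p-1)p^{n-1}u_i$, independent of $j$, after which Assumption~\ref{ass3} reduces to the worst case $j=n$ and to the inequality $u_n-u_i\geq(b_n-b_i)/p^n$. The only cosmetic difference is that the paper reads off $\mu_{i,j}$ as the entries of $(\mathbf\Omega)^{-1}$ while you derive the same coefficients by direct induction on $j$, and your numerical verification that $C_n\geq u_i-b_i/p^j$ for $i\leq j\leq n$ is laid out more explicitly than the paper's (slightly garbled) remark about the sequence $\{C_i\}$.
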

\begin{proof}
Using \eqref{matrix-eq} we find
$((\sigma_i-1)X_{j,j})_{1\leq i,j\leq n}= 
((\sigma_i-1)x_j)_{1\leq i,j\leq n}(\mathbf{\Omega})^{-1}$.
Recall that $(\sigma_i-1)X_{j,j}=0=(\sigma_i-1)x_j$ for $i>j$.
Express the upper triangular matrix  $(\mathbf{\Omega})^{-1}=(\alpha_{i,j})$
for some $\alpha_{i,j}\in K_0$. 
So then for $i\leq j$,
$(\sigma_i-1)X_{j,j}=\alpha_{i,j}(\sigma_i-1)x_i$ where $(\sigma_i-1)x_i\in K_{(i)}$ is a 1-unit.
Recall from \eqref{sigma-x} that $v_{(i)}((\sigma_i-1)x_i-1)=pv_0(p)-(p-1)u_i$. Note that $\alpha_{i,i}=1$.

Since $v_j(X_{j,j})=-b_j$, $v_j((\sigma_i-1)X_{j,j})=b_i-b_j$. Since $(\sigma_i-1)x_i$ is a unit,
$v_0(\alpha_{i,j})=(b_i-b_j)/p^j$. Let $\mu_{i,j}=\alpha_{i,j}$ in \eqref{sigmaX}.
Then $\epsilon_{i,j}=\mu_{i,j}((\sigma_i-1)x_i-1)$
and $v_j(\epsilon_{i,j})=v_j(\mu_{i,j})+
p^{j-1}(pv_0(p)-(p-1)u_i)$, which is equivalent to
$v_n(\epsilon_{i,j})-v_n(\mu_{i,j})=
p^nv_0(p)-(p-1)p^{n-1}u_i$.

Recall that
Assumption \ref{ass3} with precision $\euc\geq 1$, is equivalent to 
$v_n(\epsilon_{i,j})-v_n(\mu_{i,j})\geq p^{n-1}u_i-p^{n-j}b_i+\euc$.
So Assumption \ref{ass3} follows from
$p^nv_0(p)-(p-1)p^{n-1}u_i\geq p^{n-1}u_i-p^{n-j}b_i+\euc$, or
$v_0(p)\geq u_i -b_i/p^{-j}+\euc/p^n$ for all $1\leq i\leq j\leq n$.
Since $C_i=u_i -b_i/p^{-i}\geq u_i -b_i/p^{-j}$ and $\{C_i\}$ is an increasing sequence, this means that
Assumption \ref{ass3} with precision $\euc\geq 1$ follows from
Assumption 
\ref{strong-ass-p}.
\end{proof}

\subsection{Candidate has correct valuation}\label{val-X}
First we define some auxiliary elements. 
Let $B_1=\beta$ and $E_{1,j}=\epsilon_j$, and for $2\leq i\leq j\leq n$ define
\begin{eqnarray}
B_i&=&\wp(X_{i,i}); \label{B-old} \\
E_{i,j}&=&\wp(X_{i,j})- \Omega_{i,j}^{p^{n-i}}B_i. \label{A-S}\\
M_{i-1,j}&=&X_{i,j}^p
-X_{i-1,j}^p+\Omega_{i-1,j}^{p^{n-i+1}}X_{i-1,i-1}^p\in K_{i-1}(x_j),\label{M}\\
L_{i-1,j}&=&
\wp\left
(\Omega_{i-1,i}^{p^{n-i}}\right)\Omega_{i,j}^{p^{n-i}}-\wp\left(\Omega_{i-1,j}^{p^{n-i}}\right)\in
K_0.\label{L}
\end{eqnarray}
Observe that
\eqref{AS-equations} together with
\eqref{A-S} can be restated as 
$\wp(X_{i,j})=\Omega_{i,j}^{p^{n-i}}B_i+E_{i,j}$
for $1\leq i\leq j\leq n$. 
Using \eqref{X}, \eqref{A-S} and \eqref{M}, we calculate
\begin{multline}
 \wp(X_{i,j})  =  X_{i,j}^p - X_{i,j}  
    =  \left( M_{i-1,j} + X_{i-1,j}^p -
 \Omega_{i-1,j}^{p^{n-i+1}} X_{i-1,i-1}^p \right) - X_{i,j} \\
  =  M_{i-1,j} + \wp(X_{i-1,j}) - \Omega_{i-1,j}^{p^{n-i+1}}
     X_{i-1,i-1}^p +  \Omega_{i-1,j}^{p^{n-i}} X_{i-1,i-1}  
  \\=  M_{i-1,j} + \left(\Omega_{i-1,j}^{p^{n-i+1}} B_{i-1}+ E_{i-1,j}
       \right)  
    - \Omega_{i-1,j}^{p^{n-i+1}} \left(B_{i-1}+X_{i-1,i-1}
       \right) + \Omega_{i-1,j}^{p^{n-i}} X_{i-1,i-1}  \\
  =  E_{i-1,j} +M_{i-1,j} - \wp(\Omega_{i-1,j}^{p^{n-i}})
       X_{i-1,i-1}. \label{Xij}
\end{multline}
Using \eqref{Xij} with $j=i$, \eqref{B-old} becomes
\begin{equation}\label{B}
B_i=E_{i-1,i}+ M_{i-1,i}-\wp\left (\Omega_{i-1,i}^{p^{n-i}}\right )X_{i-1,i-1}.
\end{equation}
Use \eqref{Xij} to replace $\wp(X_{i,j})$ in \eqref{A-S}, and then use \eqref{B} to replace $B_i$. The result is 
\begin{equation}\label{E}
E_{i,j}=E_{i-1,j}+M_{i-1,j}-\Omega_{i,j}^{p^{n-i}}(E_{i-1,i}+
M_{i-1,i}) +L_{i-1,j}X_{i-1,i-1}
\end{equation}

We now define $\Omega_k^{\pi(i,j)}\in K_0$ for integers $1\leq i\leq
j\leq k\leq n$. Let $(\mathbf{\Omega}^p)$ be the matrix formed by
replacing each coefficient in $(\mathbf{\Omega})$ with its $p$th power.
The $\Omega_k^{\pi(i,j)}\in K_0$ generalize the coefficients that
appear in the inverse of $(\mathbf{\Omega}^p)$.  Given integers $i\leq
j$, let $\pi(i,j)=\{(a_1,a_2,\ldots , a_t): i= a_1<a_2<\cdots
<a_t\leq j \}$ denote the set of increasing integer sequences that begin at $i$ and end at or before $j$.  Given $k \geq
j$, associate to each
sequence $(a_1,a_2,\ldots , a_t)\in\pi(i,j)$ the product
$(-1)^t\Omega_{a_1,a_2}^{p^{n-a_1}}\Omega_{a_2,a_3}^{p^{n-a_2}}\Omega_{a_3,a_4}^{p^{n-a_3}}\cdots
\Omega_{a_t,k}^{p^{n-a_t}}$.  Let
$$\Omega_k^{\pi(i,j)}=\sum_{(a_1,\ldots , a_t)\in\pi(i,j)}
(-1)^t\Omega_{a_1,a_2}^{p^{n-a_1}}\Omega_{a_2,a_3}^{p^{n-a_2}}\Omega_{a_3,a_4}^{p^{n-a_3}}\cdots
\Omega_{a_t,k}^{p^{n-a_1}}.$$
In particular,
\begin{equation} \label{Omega-prod-short}
     \Omega_j^{\pi(i,i)}=-\Omega_{i,j}^{p^{n-i}}.  
\end{equation}
Observe that for $i<j<k$,
\begin{equation}\label{Omega-prod}
\Omega_k^{\pi(i,j)}=\Omega_j^{\pi(i,j-1)}\Omega_k^{\pi(j,j)}+\Omega_k^{\pi(i,j-1)}.
\end{equation}
Furthermore, for $i<j$,
$$\Omega_{j}^{\pi(i,j-1)}=-\left (
\Omega_{i,i+1}^{p^{n-i}}\Omega_{j}^{\pi(i+1,j-1)} +\cdots +
\Omega_{i,j-1}^{p^{n-i}}\Omega_{j}^{\pi(j-1,j-1)}+\Omega_{i,j}^{p^{n-i}}
\right ),$$ which can be seen as the dot product of the $i$th row of
$(\mathbf{\Omega}^p)$ and the $j$th column of
$$(\mathbf{\Omega}^p)^{-1}=\begin{pmatrix}1&\Omega_2^{\pi(1,1)}&\Omega_3^{\pi(1,2)}& \cdots &\Omega_n^{\pi(1,n-1)}\\
0&1&\Omega_3^{\pi(2,2)}& \cdots &\Omega_n^{\pi(2,n-1)}\\
& & \ddots & &\\
0&0&\cdots& 1 &\Omega_n^{\pi(n-1,n-1)}\\
0&0&\cdots& 0 &1
\end{pmatrix}.$$
Now  check, using Lemma \ref{v-Omega}, that for 
$(a_1,\ldots , a_t)\in\pi(i,j)$
$$v_0\left((-1)^t\Omega_{a_1,a_2}^{p^{n-a_1}}\Omega_{a_2,a_3}^{p^{n-a_2}}\Omega_{a_3,a_4}^{p^{n-a_3}}\cdots
\Omega_{a_t,k}^{p^{n-a_t}}\right)=-p^{n-1}\sum_{s=i+1}^km_s.$$
Thus 
\begin{equation}\label{Omega-bound}
v_0\left(\Omega_k^{\pi(i,j)}\right)\geq -p^{n-1}\sum_{s=i+1}^km_s=u_i-u_k=v_0\left(\Omega_{i,k}^{p^{n-i}}\right).
\end{equation}

\begin{lemma}\label{extended-E}
$E_{i,j}=E_{1,j}+\sum_{s=2}^i \Omega_j^{\pi(s,i)}E_{1,s}+
\sum_{r=1}^{i-1}\left (M_{r,j}+\sum_{s=r+1}^{i}\Omega_j^{\pi(s,i)}M_{r,s}
\right)\\
+\sum_{r=1}^{i-1}\left (L_{r,j}+\sum_{s=r+2}^{i}\Omega_j^{\pi(s,i)}L_{r,s}
\right)X_{r,r}
$ for $1\leq i<j\leq n$.
\end{lemma}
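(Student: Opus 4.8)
The plan is to prove Lemma~\ref{extended-E} by induction on $i$. The base case $i=1$ is immediate: the asserted formula reads $E_{1,j}=E_{1,j}$, since every sum on the right (over $s$ starting at $2$, over $r$ from $1$ to $i-1=0$) is empty. For the inductive step, suppose the formula holds for $E_{i-1,j'}$ whenever $i-1<j'\le n$; in particular it holds for $E_{i-1,j}$ and for $E_{i-1,i}$. Substitute both expansions into the recursion \eqref{E},
$$E_{i,j}=E_{i-1,j}+M_{i-1,j}-\Omega_{i,j}^{p^{n-i}}\bigl(E_{i-1,i}+M_{i-1,i}\bigr)+L_{i-1,j}X_{i-1,i-1},$$
and regroup the right-hand side according to the three families of terms appearing in the statement: the $E_{1,s}$, the $M_{r,s}$, and the $L_{r,s}X_{r,r}$.

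The one structural input is the coefficient recursion \eqref{Omega-prod}. Relabelling its indices $(i,j,k)\mapsto(s,i,j)$ and using \eqref{Omega-prod-short} in the form $\Omega_j^{\pi(i,i)}=-\Omega_{i,j}^{p^{n-i}}$ yields, for $s<i<j$,
$$\Omega_j^{\pi(s,i-1)}-\Omega_{i,j}^{p^{n-i}}\,\Omega_i^{\pi(s,i-1)}=\Omega_j^{\pi(s,i)}.$$
This is exactly what merges, for each fixed $s$ with $2\le s\le i-1$, the coefficient of $E_{1,s}$ contributed by $E_{i-1,j}$ with the coefficient of $E_{1,s}$ contributed by $-\Omega_{i,j}^{p^{n-i}}E_{i-1,i}$ into $\Omega_j^{\pi(s,i)}E_{1,s}$; the remaining piece $-\Omega_{i,j}^{p^{n-i}}E_{1,i}=\Omega_j^{\pi(i,i)}E_{1,i}$ supplies the new summand $s=i$, so the $E$-terms assemble to $E_{1,j}+\sum_{s=2}^i\Omega_j^{\pi(s,i)}E_{1,s}$. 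The identical merging, now with $E_{1,s}$ replaced by $M_{r,s}$ for each fixed $r\le i-2$, together with the standalone contribution $M_{i-1,j}-\Omega_{i,j}^{p^{n-i}}M_{i-1,i}$ (which is precisely the new $r=i-1$ summand $M_{i-1,j}+\Omega_j^{\pi(i,i)}M_{i-1,i}$), gives $\sum_{r=1}^{i-1}\bigl(M_{r,j}+\sum_{s=r+1}^{i}\Omega_j^{\pi(s,i)}M_{r,s}\bigr)$. Running the same argument once more with $M_{r,s}$ replaced by $L_{r,s}X_{r,r}$, and with the standalone $L_{i-1,j}X_{i-1,i-1}$ furnishing the new $r=i-1$ term (whose inner sum $\sum_{s=i+1}^{i}$ is empty), gives $\sum_{r=1}^{i-1}\bigl(L_{r,j}+\sum_{s=r+2}^{i}\Omega_j^{\pi(s,i)}L_{r,s}\bigr)X_{r,r}$. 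Adding the three regrouped pieces reproduces the claimed expression for $E_{i,j}$.

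The only genuine difficulty is bookkeeping. One must track which sums degenerate to the empty sum in the low-index cases (for instance $i=2$, or $i-1=1$ where the inductive hypothesis is just a definition), confirm that the inductive hypothesis is invoked only at the legitimate index pairs $(i-1,j)$ and $(i-1,i)$, and check that \eqref{Omega-prod} is used solely in the range $s<i<j$ for which it was established, the boundary contributions $s=i$ being handled separately through \eqref{Omega-prod-short}. I expect this careful index-chasing to be the main obstacle; once the displayed identity for $\Omega_j^{\pi(s,i)}$ is in hand, no further idea is needed.
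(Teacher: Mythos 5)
Your proof is correct and takes essentially the same route as the paper: both argue by induction on $i$, expand $E_{i,j}$ via the recursion \eqref{E}, substitute the inductive formulas for $E_{i-1,j}$ and $E_{i-1,i}$, and merge the resulting coefficients of $E_{1,s}$, $M_{r,s}$, and $L_{r,s}X_{r,r}$ using \eqref{Omega-prod} together with \eqref{Omega-prod-short}. The only cosmetic difference is that the paper treats $i=2$ as a separate small case before the general inductive step, whereas you fold it into the induction by noting the $i=1$ formula is a tautology; both are fine.
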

\begin{proof}
This is clear for $i=1$.  For $i=2$ the statement follows directly
from \eqref{E} and \eqref{Omega-prod-short}. 
Assume that the Lemma holds for 
$(i,j)=(i_0-1,j),(i_0-1,i_0)$. 
Using \eqref{Omega-prod-short}, we can write \eqref{E}
as 
$E_{i_0,j}=E_{i_0-1,j}+
\Omega_j^{\pi(i_0,i_0)}E_{i_0-1,i_0}+ 
L_{i_0-1,j}X_{i_0-1,i_0-1}+
M_{i_0-1,j}+
\Omega_j^{\pi(i_0,i_0)}M_{i_0-1,i_0}$.
Using induction, replace $E_{i_0-1,j}$ and $E_{i_0-1,i_0}$. Then
\eqref{E} becomes
\begin{multline*}
E_{i_0,j}=
E_{1,j}+\sum_{s=2}^{i_0-1} \Omega_j^{\pi(s,i_0-1)}E_{1,s}+
\Omega_j^{\pi(i_0,i_0)}\left(
E_{1,i_0}+\sum_{s=2}^{i_0-1} \Omega_{i_0}^{\pi(s,i_0-1)}E_{1,s}\right )\\+
M_{i_0-1,j}+
\sum_{r=1}^{i_0-2}\left (M_{r,j}+\sum_{s=r+1}^{i_0-1}\Omega_j^{\pi(s,i_0-1)}M_{r,s}
\right)\\+\Omega_j^{\pi(i_0,i_0)}M_{i_0-1,i_0}+
\Omega_j^{\pi(i_0,i_0)}
\sum_{r=1}^{i_0-2}\left (M_{r,i_0}+\sum_{s=r+1}^{i_0-1}\Omega_{i_0}^{\pi(s,i_0-1)}M_{r,s}
\right)\\+
L_{i_0-1,j}X_{i_0-1,i_0-1}+
\sum_{r=1}^{i_0-2}\left (L_{r,j}+\sum_{s=r+2}^{i_0-1}\Omega_j^{\pi(s,i_0-1)}L_{r,s}
\right)X_{r,r}\\+
\Omega_j^{\pi(i_0,i_0)}
\sum_{r=1}^{i_0-2}\left (L_{r,i_0}+\sum_{s=r+2}^{i_0-1}\Omega_{i_0}^{\pi(s,i_0-1)}L_{r,s}
\right)X_{r,r}.
\end{multline*}
As a result,
\begin{multline*}
E_{i_0,j}=
E_{1,j}+
\Omega_j^{\pi(i_0,i_0)}E_{1,i_0}+
\sum_{s=2}^{i_0-1} \left(\Omega_j^{\pi(s,i_0-1)}+\Omega_j^{\pi(i_0,i_0)}\Omega_{i_0}^{\pi(s,i_0-1)}\right)E_{1,s}+
\sum_{r=1}^{i_0-1}M_{r,j}
\\+
\Omega_j^{\pi(i_0,i_0)}\sum_{r=1}^{i_0-1}M_{r,i_0}+
\sum_{r=1}^{i_0-2}
\sum_{s=r+1}^{i_0-1}\left(\Omega_j^{\pi(s,i_0-1)}+\Omega_j^{\pi(i_0,i_0)}\Omega_{i_0}^{\pi(s,i_0-1)}\right )M_{r,s}
+
\sum_{r=1}^{i_0-1}L_{r,j}X_{r,r}\\+
\Omega_j^{\pi(i_0,i_0)}
\sum_{r=1}^{i_0-2}L_{r,i_0}X_{r,r}
+
\sum_{r=1}^{i_0-2}\sum_{s=r+2}^{i_0-1}\left (\Omega_j^{\pi(s,i_0-1)}+\Omega_j^{\pi(i_0,i_0)}\Omega_{i_0}^{\pi(s,i_0-1)}\right)L_{r,s}
X_{r,r}.
\end{multline*}
Using \eqref{Omega-prod} , we find that
\begin{multline*}
E_{i_0,j}=
E_{1,j}+
\Omega_j^{\pi(i_0,i_0)}E_{1,i_0}+
\sum_{s=2}^{i_0-1} 
\Omega_j^{\pi(s,i_0)}
E_{1,s}
\\+\sum_{r=1}^{i_0-1}M_{r,j}+
\sum_{r=1}^{i_0-1}\Omega_j^{\pi(i_0,i_0)}M_{r,i_0}+
\sum_{r=1}^{i_0-2}
\sum_{s=r+1}^{i_0-1}\Omega_j^{\pi(s,i_0)}
M_{r,s}
\\+
\sum_{r=1}^{i_0-1}L_{r,j}X_{r,r}+
\sum_{r=1}^{i_0-2}\Omega_j^{\pi(i_0,i_0)}L_{r,i_0}X_{r,r}
+
\sum_{r=1}^{i_0-2}\sum_{s=r+2}^{i_0-1}\Omega_j^{\pi(s,i_0)}
L_{r,s}
X_{r,r},
\end{multline*}
from which the result for $i=i_0$ follows.
\end{proof}

\begin{lemma}\label{lemma-L}
If $v_0(p)\geq C_n$ then 
$v_0(L_{r,s})\geq v_0(p)+u_r-u_s$ for $1\leq r<s\leq n$.
\end{lemma}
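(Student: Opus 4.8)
The plan is to make $L_{r,s}$ manifestly divisible by $p$ with an explicitly controllable cofactor, and then read off the valuation from Lemma~\ref{v-Omega}. Writing $r=i-1$, $s=j$ in \eqref{L} and setting $P=n-i=n-r-1$, we have $L_{r,s}=\wp(\Omega_{r,r+1}^{p^{P}})\,\Omega_{r+1,s}^{p^{P}}-\wp(\Omega_{r,s}^{p^{P}})$. The first step is a purely combinatorial identity, valid in $\mathbb{Z}[Y]$ for every $P\geq 0$: there is $g_P(Y)\in\mathbb{Z}[Y]$, each of whose monomials $Y^{e}$ satisfies $p^{P}\leq e<p^{P+1}$, with $\wp(Y^{p^{P}})=\wp(Y)^{p^{P}}+p\,g_P(Y)$. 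This follows by expanding $(Y^{p}-Y)^{p^{P}}=\sum_{k=0}^{p^{P}}\binom{p^{P}}{k}(-1)^{k}Y^{p^{P+1}-k(p-1)}$ and subtracting from $Y^{p^{P+1}}-Y^{p^{P}}$: the $k=0$ term cancels $Y^{p^{P+1}}$, the $k=p^{P}$ term combines with $-Y^{p^{P}}$ to give $-\big(1+(-1)^{p^{P}}\big)Y^{p^{P}}$, which vanishes unless $p=2$ and $P\geq 1$ (when it equals $-p\,Y^{p^{P}}$), and each middle coefficient $\binom{p^{P}}{k}$ with $0<k<p^{P}$ is divisible by $p$; the surviving exponents are easily checked to lie in $[p^{P},p^{P+1})$.

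Next I substitute this identity into \eqref{L}. Using \eqref{Omega} in the form $\wp(\Omega_{r,s})=\wp(\Omega_{r,r+1})\,\Omega_{r+1,s}$, the leading parts cancel: $\wp(\Omega_{r,r+1})^{p^{P}}\Omega_{r+1,s}^{p^{P}}=\big(\wp(\Omega_{r,r+1})\Omega_{r+1,s}\big)^{p^{P}}=\wp(\Omega_{r,s})^{p^{P}}$. Hence $L_{r,s}=p\big(g_P(\Omega_{r,r+1})\,\Omega_{r+1,s}^{p^{P}}-g_P(\Omega_{r,s})\big)$, and it remains to bound the valuation of each of the two terms in the bracket by $u_r-u_s$.

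For this I invoke Lemma~\ref{v-Omega}: $v_0(\Omega_{r,r+1})=-p^{r-1}m_{r+1}\leq 0$, $v_0(\Omega_{r+1,s})=-p^{r}\sum_{k=r+2}^{s}m_k$, and $v_0(\Omega_{r,s})=-p^{r-1}\sum_{k=r+1}^{s}m_k\leq 0$. Since $g_P$ has integer coefficients and each of its monomials has exponent $e\leq p^{P+1}=p^{n-r}$, evaluating $g_P$ at an element of valuation $v\leq 0$ produces an element of valuation $\geq p^{n-r}v$ (and $\geq 0$ if $v=0$). Thus $v_0\big(g_P(\Omega_{r,r+1})\big)\geq -p^{n-1}m_{r+1}$ while $v_0\big(\Omega_{r+1,s}^{p^{P}}\big)=-p^{n-1}\sum_{k=r+2}^{s}m_k$, so the first term has valuation $\geq -p^{n-1}\sum_{k=r+1}^{s}m_k=u_r-u_s$; and $v_0\big(g_P(\Omega_{r,s})\big)\geq -p^{n-1}\sum_{k=r+1}^{s}m_k=u_r-u_s$ directly. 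Therefore $v_0(L_{r,s})\geq v_0(p)+u_r-u_s$, as claimed. (In the boundary case $s=r+1$ one has $\Omega_{r+1,s}=1$ and the $m_k$-sums are empty; in fact $L_{r,r+1}=0$.)

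The only genuine obstacle is the bookkeeping in the combinatorial identity — keeping the two parities straight, especially the $p=2$ sign that forces the extra $-p\,Y^{p^{P}}$ term, and pinning down the exponent range $[p^{P},p^{P+1})$ — once that identity and the cancellation coming from \eqref{Omega} are in place, the valuation count via Lemma~\ref{v-Omega} is routine. I remark that the hypothesis $v_0(p)\geq C_n$ does not appear to be needed for this estimate; it is presumably stated only because the lemma is invoked under that standing assumption.
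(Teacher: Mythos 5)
Your proof is correct, and it takes a genuinely different route from the paper's. Both arguments rest on a polynomial identity controlling $\wp(Y^{p^P})-\wp(Y)^{p^P}$, and both ultimately trace the $p$-divisibility to the divisibility of $\binom{p^P}{k}$ for $0<k<p^P$ (together with the $p=2$ sign at $k=p^P$). They diverge in how the cofactor of $p$ is packaged. The paper substitutes $y=\Omega_{r,s}^{1-p}$, shows $(1-y)^{p^t}-1+y^{p^t}$ lies in the ideal $\bigl(p^i y^{p^{t-i}}:1\leq i\leq t\bigr)$ of $\mathbb{Z}[y]$ via the sharp form of Kummer's theorem (the exact power of $p$ in $\binom{p^t}{k}$), and then invokes the hypothesis $v_0(p)\geq C_n$ precisely to collapse that ideal, after specialization, to the single generator $py^{p^{t-1}}$; the hypothesis governs the trade-off between the growing power of $p$ and the growing exponent of $y$ across the $t$ generators. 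You sidestep this by extracting a single factor of $p$ at once, $\wp(Y^{p^P})=\wp(Y)^{p^P}+p\,g_P(Y)$ with $g_P\in\mathbb{Z}[Y]$ and every monomial of degree in $[p^P,p^{P+1})$, and after the cancellation $\wp(\Omega_{r,r+1})^{p^P}\Omega_{r+1,s}^{p^P}=\wp(\Omega_{r,s})^{p^P}$ (from \eqref{Omega}) the bound drops out of Lemma~\ref{v-Omega} by a one-line valuation count using $v_0\bigl(g_P(\alpha)\bigr)\geq p^{P+1}v_0(\alpha)$ for $v_0(\alpha)\leq 0$. What your route buys: it needs only the weak divisibility $p\mid\binom{p^P}{k}$, it avoids the change of variable and the multi-generator ideal, and — as you observe — the conclusion $v_0(L_{r,s})\geq v_0(p)+u_r-u_s$ comes out unconditionally, so the hypothesis $v_0(p)\geq C_n$ is indeed not used in this step (it is needed elsewhere in the proof of Proposition~\ref{v-Xi}, where the lemma is applied). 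The only delicate bookkeeping is the exponent range for $g_P$ (endpoints $p^P+p-1$ at $k=p^P-1$ and $p^{P+1}-p+1$ at $k=1$, plus the boundary term $p^P$ when $p=2$, $P\geq1$), and you have handled it correctly.
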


\begin{proof}
The formula for $L_{r,s}$, namely \eqref{L},
compared with \eqref{Omega}
leads to an interest in $
\wp(\Omega_{r,s})^{p^t}- \wp(\Omega_{r,s}^{p^t})$
or
$\Omega_{r,s}^{p^{t+1}}\left((1-y)^{p^t}-1+y^{p^t}\right)$ where $t=n-r-1$ and
$y=\Omega_{r,s}^{1-p}\in \euO_0$.  Note that from Lemma \ref{v-Omega} we have
$v_0(y)=p^{r-1}(p-1)\sum_{v=r+1}^sm_v\geq 0$.  So we begin by proving the
following:

Given any prime $p$, 
integer 
$t\geq 1$ and indeterminate $y$, the polynomial $(1-y)^{p^t}-1+y^{p^t}$ 
is contained in the ideal of the polynomial ring $\mathbb{Z}[y]$:
\begin{equation}\label{ideal}
(p^iy^{p^{t-i}}
: 1\leq i\leq t).
\end{equation}
  Although this can be proven by
induction, we prefer an alternate proof using the Binomial Theorem:
$(1-y)^{p^t}-1+y^{p^t}=\sum_{a=1}^{p^t-1}\binom{p^t}{a}(-y)^{a} +
(-y)^{p^t}+y^{p^t}$.  It is a result of Kummer that the exact power of
$p$ dividing $\binom{a+b}{a}$ is equal to the number of ``carries''
when performing the addition of $a$ and $b$, written in base $p$
\cite[pg 24]{ribenboim}.  Given $i$, we are interested in identifying the
smallest integer exponent $a$ such that $a$ plus $p^t-a$ involves exactly $i$
``carries''. This occurs at $a=p^{t-i}$. Note that
$p^{t-i}$ plus $p^t-p^{t-i}=(p-1)p^{t-1}+(p-1)p^{t-2}+\cdots
+(p-1)p^{t-i}$ is $p^t$.

Return to the situation where $y=\Omega_{r,s}^{1-p}\in \euO_0$ and
\eqref{ideal} is an ideal of $\euO_0$. 
We now prove that under $v_0(p)\geq C_n$, 
this ideal is generated by $py^{p^{t-1}}$.
In other words, we prove that
$v_0(p^iy^{p^{t-i}})=iv_0(p)+p^{t-i}v_0(y)\geq 
v_0(py^{p^{t-1}})$ for $1\leq i\leq t$.
Observe that because $b_1>0$ and $m_k\geq 0$ we have $b_1+p^{n-2}\sum_{k=2}^nm_k\geq b_1/p^n+\sum_{k=2}^np^{k-2}m_k$, which is equivalent to $b_1+u_n/p\geq b_1/p+b_n/p^n$ and thus to $u_n-b_n/p^n\geq (p-1)(u_n-b_1)/p$.
As a consequence, $u_n-b_n/p^n\geq (p-1)(u_n-b_1)/p\geq (p-1)(u_n-b_1)/p^2$. Our assumption, 
$v_0(p)\geq C_n$, therefore means that $v_0(p)\geq (p-1)(u_n-b_1)/p^2$. In other words,
$v_0(p)\geq p^{n-3}(p-1)\sum_{k=2}^nm_k$. Thus 
$v_0(p)\geq p^{n-3}(p-1)\sum_{v=r+1}^sm_v$ for all $1\leq r<s\leq n$, and thus
$v_0(p)\geq p^{n-r-2}v_0(y)$. We have shown that $v_0(p)\geq p^{t-1}v_0(y)$. As a result, for $2\leq i$,
$(i-1)v_0(p)\geq v_0(p)\geq p^{t-1}v_0(y)\geq (1/p-1/p^i)p^tv_0(y)$ from which $v_0(p^iy^{p^{t-i}})\geq 
v_0(py^{p^{t-1}})$ follows.

This implies
\begin{equation}\label{implic-Omega1}
\wp(\Omega_{r,s})^{p^t} -\wp\left
(\Omega_{r,s}^{p^t}\right)\in
p\Omega_{r,s}^{p^{t-1}(p^2-p+1)}\euO_0.
\end{equation}
In particular, by setting $s=r+1$, we see that \eqref{implic-Omega1} implies
\begin{equation}
\label{implic-Omega2}
\frac{\wp(\Omega_{r,r+1}^{p^t} )}{\wp(\Omega_{r,r+1})^{p^t}}
\in 1+ p\Omega_{r,r+1}^{-p^{t-1}(p-1)}
\euO_0.
\end{equation}
Replace $\Omega_{r+1,s}$ in the expression for $L_{r,s}$ using
\eqref{Omega}.  Thus, using
\eqref{implic-Omega2} and \eqref{Omega}, we see that
$L_{r,s}\in
\wp(\Omega_{r,s})^{p^t}
(1+p\Omega_{r,r+1}^{-p^{t-1}(p-1)}\euO_0)
-\wp(\Omega_{r,s}^{p^{t}} )$.
Using \eqref{implic-Omega1},
$$L_{r,s}\in
p\Omega_{r,s}^{p^{t-1}(p^2-p+1)}\euO_0+
p\wp(\Omega_{r,s})^{p^t}\Omega_{r,r+1}^{-p^{t-1}(p-1)}\euO_0.$$
Since 
$v_0(\wp(\Omega_{r,s}))=v_0(\Omega_{r,s}^p)$ and
$v_0(\Omega_{r,s})\leq v_0(\Omega_{r,r+1})$,
$v_0(\wp(\Omega_{r,s})^{p^t}\Omega_{r,r+1}^{-p^{t-1}(p-1)})\leq v_0(\Omega_{r,s}^{p^{t-1}(p^2-p+1)})$.
Therefore
$v_0(L_{r,s})\geq v_0(p\wp(\Omega_{r,s})^{p^t}\Omega_{r,r+1}^{-p^{t-1}(p-1)})$, which implies $v_0(L_{r,s})\geq 
v_0(p)-p^{n-1}\sum_{k=r+2}^sm_k-p^{n-3}(p^2-p+1)m_{r+1}\geq v_0(p)+u_r-u_s$.
\end{proof}

\begin{proposition}\label{v-Xi} Under Assumption \ref{ass-epsilon}, if $v_0(p)>C_n$ then
$v_j(X_{j,j})=-b_j$ for $1\leq j\leq n$.
\end{proposition}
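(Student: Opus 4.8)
The plan is to prove, by induction on $j$ from $1$ to $n$, the stronger assertion
\[
  v_j(X_{i,j})=p^{j-1}(u_i-u_j)-p^{j-i}b_i\qquad\text{for all }1\le i\le j ;
\]
the Proposition is the diagonal case $i=j$, where the right-hand side is $-b_j$. Two preliminary observations make the induction work: this target value is negative for every $i\le j$ (since $u_i\le u_j$ and $b_i>0$); and, by the identities $u_i=u_{i-1}+p^{n-1}m_i$ and $b_i=b_{i-1}+p^{n+i-2}m_i$, it equals $p^{j-1}(u_{i-1}-u_j)-p^{j-i}b_{i-1}$, so it is consistent with the value of $v_j(X_{i-1,j})$ under the recursion $X_{i,j}=X_{i-1,j}-\Omega_{i-1,j}^{p^{n-i}}X_{i-1,i-1}$.

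Fix $j$ and assume the formula for all smaller values; argue by an inner induction on $i$. For $i=1$ it is immediate: $X_{1,j}=x_j\in K_{(j)}$ with $v_{(j)}(x_j)=-u_j$, and $K_j/K_{(j)}$ is totally ramified of degree $p^{j-1}$, so $v_j(x_j)=-p^{j-1}u_j=p^{j-1}(u_1-u_j)-p^{j-1}b_1$ since $u_1=b_1$. For $2\le i\le j$ we invoke \eqref{Xij} (which for $i=j$ is \eqref{B}),
\[
  \wp(X_{i,j})=-\,\wp\bigl(\Omega_{i-1,j}^{p^{n-i}}\bigr)\,X_{i-1,i-1}+E_{i-1,j}+M_{i-1,j},
\]
and show the first summand is strictly dominant. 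Its valuation is $p^{j}\,v_0\bigl(\wp(\Omega_{i-1,j}^{p^{n-i}})\bigr)+v_j(X_{i-1,i-1})$; by Lemma \ref{v-Omega}, $v_0(\Omega_{i-1,j}^{p^{n-i}})=p^{-1}(u_{i-1}-u_j)$, hence $v_0(\wp(\Omega_{i-1,j}^{p^{n-i}}))=u_{i-1}-u_j$ (when this is $0$ one uses that a unit among the $\Omega$'s has residue outside $\bF_p$, as seen in the proof of Lemma \ref{v-Omega}), while the outer hypothesis gives $v_{i-1}(X_{i-1,i-1})=-b_{i-1}$ and thus $v_j(X_{i-1,i-1})=-p^{j-i+1}b_{i-1}$. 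So the leading term has valuation $p^{j}(u_{i-1}-u_j)-p^{j-i+1}b_{i-1}$, which is negative and is exactly $p$ times the claimed value of $v_j(X_{i,j})$. Once dominance is established, $\wp(X_{i,j})=X_{i,j}^p-X_{i,j}$ forces $v_j(X_{i,j})<0$, whence $v_j(X_{i,j})=\tfrac{1}{p}\,v_j(\wp(X_{i,j}))$ and the inner step closes.

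The heart of the matter is the two dominance estimates. Expanding $M_{i-1,j}$ by the binomial theorem displays it as $p$ times a sum of monomials in $X_{i-1,j}$ and $\Omega_{i-1,j}^{p^{n-i}}X_{i-1,i-1}$ --- the extreme terms of the expansion being exactly those subtracted off in the definition \eqref{M}, so $X_{i,j}$ itself does not reappear --- and these two quantities have common valuation $p^{j-1}(u_{i-1}-u_j)-p^{j-i+1}b_{i-1}$ by the inner hypothesis and Lemma \ref{v-Omega}; the estimate therefore reduces to $v_0(p)>(p-1)b_{i-1}/p^{i-1}$, which follows from $v_0(p)>C_n$ after one checks $(p-1)b_{i-1}/p^{i-1}\le C_n$ directly from the explicit formulas for $b_{i-1}$, $u_n$, $b_n$. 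For $E_{i-1,j}$ I substitute the expansion of Lemma \ref{extended-E} and bound its summands in three families: the $\epsilon_s$-terms, using Assumption \ref{ass-epsilon} together with $v_0(\Omega_j^{\pi(s,i-1)})\ge u_s-u_j$ from \eqref{Omega-bound}, which reduces to $C_{n-1}\ge C_{i-1}$; the $M_{r,\bullet}$-terms, handled as above but now via the outer hypothesis at levels $s,r<j$, reducing to $v_0(p)>C_{i-1}-C_{r-1}$; and the $L_{r,\bullet}X_{r,r}$-terms, via Lemma \ref{lemma-L} (applicable since $v_0(p)\ge C_n$) and the outer hypothesis, reducing to $v_0(p)>C_{i-1}-C_r$. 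Each reduced inequality holds because $\{C_i\}$ is increasing with $C_0=0$, so the relevant difference is at most $C_{i-1}\le C_n$. I expect the genuine difficulty to be organizational: keeping the nested induction honest, so that every valuation $v_s(X_{r,s})$ invoked is legitimately available (the inner hypothesis when $s=j$, the outer when $s<j$), and assembling the handful of elementary inequalities among the $b_i$, $u_i$, $C_i$ that certify all error terms as negligible.
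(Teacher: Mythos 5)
Your proof is correct and is, in essence, the paper's argument with a reorganized induction. You iterate \eqref{Xij} directly under a double induction (outer on $j$, inner on $i$) and track $v_j(X_{i,j})$, whereas the paper runs a single induction on $i$ and tracks the auxiliary quantities $v_{i-1}^*(B_i)$ and $v_{i-1}^*(E_{i,j})$ through \eqref{A-S}, \eqref{B} and \eqref{E}; since \eqref{Xij} is exactly \eqref{A-S} with \eqref{B} and \eqref{E} substituted, the two bookkeeping schemes are equivalent, and both pass through Lemma~\ref{extended-E} to the same three families of estimates on $E_{1,s}$, $M_{r,s}$, $L_{r,s}X_{r,r}$ — indeed your reduction $v_0(p) > (p-1)b_{i-1}/p^{i-1}$ for the explicit $M_{i-1,j}$ is the identity $(p-1)b_{i-1}/p^{i-1}=C_{i-1}-C_{i-2}$, i.e.\ the $r=i_0-1$ instance of the paper's $v_0(p)>C_{i_0-1}-C_{r-1}$. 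Your version has the minor advantage of dispensing with $B_i$, the rescaled valuation $v_i^*$, and the target $\euT_{i,j}$, but there is no new idea.
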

\begin{proof}
We point out to the reader that we do not use Assumption \ref{ass-epsilon} until the last third of the proof
where we verify \eqref{goal-main} for $E_{1,s}$.

Define
$v_i^*(x)=p^{i-n}v_n(x)$ for $x\in K_n$,  so that for $x\in K_i$ we
have $v_i^*(x)=v_i(x)$.  
For $1\leq i\leq j\leq n$, define
\begin{equation}\label{ell}
\euc_{i,j}=-b_i-p^{n+i-2}\sum_{k=i+1}^jm_k=p^{i-1}(u_i-u_j)-b_i=v_{i-1}(\Omega_{i,j}^{p^{n-i}})-b_i.
\end{equation}

Our goal is to prove, by induction on $i$, that
for $1\leq i\leq j\leq n$, we have
\begin{equation}\label{val-B}
v_{i-1}^*(B_i)=\euc_{i,i}=-b_i
\end{equation}
\begin{equation}\label{val-EX}
v_{i-1}^*(E_{i,j})>v_i^*(X_{i,j})=\euc_{i,j}.
\end{equation}
The case $i=1$ is immediate from $B_1=\beta$, $X_{1,j}=x_j$ and $E_{1,j}=\epsilon_j$ since
$v_0(B_1)=v_0(\beta)=-b_1$,
$v_1^*(X_{1,j})=v_{(1)}(x_j)=-u_j=-b_1-p^{n-1}\sum_{k=2}^jm_k=\euc_{1,j}$,
and
$v_0(\epsilon_j)>-u_j$.

Given $2\leq i_0\leq n$,
assume that 
\eqref{val-B} and
\eqref{val-EX} hold for all $i=i_0-1\leq j\leq n$.
We need to prove that \eqref{val-B} and
\eqref{val-EX} hold for $i=i_0$.
Observe that once we have proven
$v_{i_0-1}^*(B_{i_0})=\euc_{i_0,i_0}$ and
for $i_0\leq j\leq n$ that
$v_{{i_0}-1}^*(E_{i_0,j})>\euc_{i_0,j}$,
then it is immediate from \eqref{A-S} and Lemma \ref{v-Omega} that $v_{i_0}^*(X_{i_0,j})=\euc_{i_0,j}$. 
Thus we focus on proving that $v_{i_0-1}^*(B_{i_0})=\euc_{i_0,i_0}$ and
for $i_0\leq j\leq n$ that
$v_{{i_0}-1}^*(E_{i_0,j})>\euc_{i_0,j}$.

Consider the expression for $B_{i_0}$ in \eqref{B}. By induction, $v_{i_0-1}(X_{i_0-1,i_0-1})=-b_{i_0-1}$.
Thus, using Lemma \ref{v-Omega}, we have
$v_{i_0-1}(\wp (\Omega_{i_0-1,i_0}^{p^{n-{i_0}}} )X_{i_0-1,i_0-1})=\euc_{i_0,i_0}=-b_{i_0}$.
Use Lemma \ref{extended-E} to expand $E_{i_0-1,i_0}$ so that the other terms in $B_{i_0}$ are
\begin{multline}\label{goal-B}
E_{i_0-1,i_0}+M_{i_0-1,i_0}=
E_{1,i_0}+\sum_{s=2}^{i_0-1} \Omega_{i_0}^{\pi(s,i_0-1)}E_{1,s}+M_{i_0-1,i_0}\\+
\sum_{r=1}^{i_0-2}\left (M_{r,i_0}+\sum_{s=r+1}^{i_0-1}\Omega_{i_0}^{\pi(s,i_0-1)}M_{r,s}
\right)
+\sum_{r=1}^{i_0-2}\left (L_{r,i_0}+\sum_{s=r+2}^{i_0-1}\Omega_{i_0}^{\pi(s,i_0-1)}L_{r,s}
\right)X_{r,r}.
\end{multline}
We will have proven $v_{i_0-1}^*(B_{i_0})=\euc_{i_0,i_0}$ as soon as we prove that the valuation in $v_{i_0-1}^*$ of each term in the right-hand-side of 
\eqref{goal-B}
exceeds $-b_{i_0}=\euc_{i_0,i_0}$.
Similarly,
$v_{{i_0}-1}^*(E_{i_0,j})>\euc_{i_0,j}$
will follow if each term in the right-hand-side of 
\begin{multline}\label{goal-E}
E_{i_0,j}=E_{1,j}+\sum_{s=2}^{i_0 }\Omega_j^{\pi(s,i_0)}E_{1,s}+
\sum_{r=1}^{i_0-1}\left (M_{r,j}+\sum_{s=r+1}^{i_0}\Omega_j^{\pi(s,i_0)}M_{r,s}
\right)\\
+\sum_{r=1}^{i_0-1}\left (L_{r,j}+\sum_{s=r+2}^{i_0}\Omega_j^{\pi(s,i_0)}L_{r,s}
\right)X_{r,r}
\end{multline}
has valuation in $v_{i_0-1}^*$ that exceeds $\euc_{i_0,j}$.
We claim that both of these statements follow if, for $1\leq r<i_0,s\leq n$, we prove that
\begin{multline}\label{goal-main} 
v_{i_0-1}^*(E_{1,s}), v_{i_0-1}^*(M_{r,s}), v_{i_0-1}^*(L_{r,s}X_{r,r})> \\
p^{i_0-1}(u_{i_0}-u_s)-b_{i_0}=
-b_{i_0}
+p^{n+i_0-2}\begin{cases}
\sum_{k=s+1}^{i_0}m_k&\mbox{if }s\leq i_0,\\
-\sum_{k=i_0+1}^sm_k &\mbox{if }i_0<s.
\end{cases}\end{multline}
To prove this claim we begin by noticing that the terms $E_{1,s}$, $M_{r,s}$, $L_{r,s}X_{r,r}$ with 
$s=j$ and $1\leq r<i_0<j\leq n$ only appear in \eqref{goal-E}.
The fact that the valuation in $v_{i_0-1}^*$ of these terms exceeds $\euc_{i_0,j}=\euc_{i_0,s}=-b_{i_0}-p^{n+i_0-2}\sum_{k=i_0+1}^sm_k$
is equivalent to \eqref{goal-main} for $i_0<s$.
The other terms, $E_{1,s}$ with $1< s\leq i_0$, $M_{r,s}$ with $1\leq r< s\leq i_0$, $L_{r,s}X_{r,r}$ 
with $1\leq r<r+2\leq s\leq i_0$,
appear in both \eqref{goal-B} and \eqref{goal-E}. So that we can treat these terms uniformly,
let $T_s$ with $s\leq i_0$ denote one such term (either $E_{1,s}$, $M_{r,s}$ or $L_{r,s}X_{r,r}$), and
notice that \eqref{goal-main} concerning $v_{i_0-1}^*(T_s)$
can be rewritten using Lemma \ref{v-Omega} as
\begin{equation}\label{ggoal}
v_{i_0-1}^*(\Omega_{s,j}^{p^{n-s}}T_s)>\euc_{i_0,j},
\end{equation}
where $j$ is any integer $i_0\leq j\leq n$.
Let $T_s$ be a term in \eqref{goal-B}. We treat the two cases, $s=i_0$ and $s<i_0$, separately.
If $s=i_0$, we need $v_{i_0-1}^*(T_s)>\euc_{i_0,i_0}$, which since $\Omega_{i_0,i_0}^{p^{n-i_0}}=1$ is equivalent to
\eqref{ggoal} with $j=i_0$. 
If $s<i_0$, we need $v_{i_0-1}^*(\Omega_{i_0}^{\pi(s,i_0-1)}T_s)>\euc_{i_0,i_0}$.
This follows from \eqref{ggoal}
with $j=i_0$, using \eqref{Omega-bound}, namely
$v_{i_0-1}^*(\Omega_{i_0}^{\pi(s,i_0-1)}T_s)\geq v_{i_0-1}^*(\Omega_{s,i_0}^{p^{n-s}}T_s)$. 
Now let $T_s$ be a term in \eqref{goal-E}, with $s\leq i_0$. We need $v_{i_0-1}^*(\Omega_{j}^{\pi(s,i_0)}T_s)>\euc_{i_0,j}$.
This follows from \eqref{ggoal}, again using \eqref{Omega-bound},
$v_{i_0-1}^*(\Omega_{j}^{\pi(s,i_0)}T_s)\geq v_{i_0-1}^*(\Omega_{s,j}^{p^{n-s}}T_s)$.

Now we prove, for each of $E_{1,s}$, $M_{r,s}$ and $L_{r,s}X_{r,r}$, that the inequalities in \eqref{goal-main} hold.
Consider \eqref{goal-main} for $E_{1,s}$.
Since
$\{C_i\}$ is an increasing sequence,
$C_{i_0-1}\leq C_{n-1}$. So, using 
Assumption 
\ref{ass-epsilon}, we have $v_0(E_{1,s})>C_{n-1}-u_s\geq C_{i_0-1}-u_s=u_{i_0}-b_{i_0}/p^{i_0-1}-u_s$, which is equivalent to \eqref{goal-main}.

Consider \eqref{goal-main} for $M_{r,s}$, namely $v_0^*(M_{r,s})>(u_{i_0}-u_s)-b_{i_0}/p^{i_0-1}$.
Since $v_0(p)>C_{n-1}$ and $\{C_i\}$ is an increasing sequence, $v_0(p)>C_{i_0-1}-C_{r-1}$.
This means that $v_0(p)+(u_r-u_s)-b_r/p^{r-1}>(u_{i_0}-u_s)-b_{i_0}/p^{i_0-1}$.
Therefore, it is sufficient to prove
$v_0^*(M_{r,s})\geq v_0(p)+(u_r-u_s)-b_r/p^{r-1}$, or equivalently
$v_r^*(M_{r,s})\geq v_r(p)+p^r(u_r-u_s)-pb_r=v_r(p)+p\euc_{r,s}$. 
By induction $v_r^*(X_{r,s})=\euc_{r,s}$ for $1\leq r< i_0$ and all $r\leq s\leq n$.
Thus it is sufficient to prove $v_r^*(M_{r,s})\geq v_r^*(pX_{r,s}^p)=v_r^*(p\Omega_{r,s}^{p^{n-r}}X_{r,r}^p)$.
Recall using \eqref{M} that 
$M_{r,s}=X_{r+1,s}^p-X_{r,s}^p+\Omega_{r,s}^{p^{n-r}}X_{r,r}^p$. Use \eqref{X}
to replace $X_{r+1,s}$. As a result,
$$
M_{r,s}=\sum_{i=1}^{p-1}\binom{p}{i}X_{r,s}^i\left (-\Omega_{r,s}^{p^{n-r-1}}X_{r,r}\right)^{p-i}+
\left (\left(-\Omega_{r,s}^{p^{n-r-1}}\right)^p
+\Omega_{r,s}^{p^{n-r}}\right)X_{r,r}^p.$$
It is sufficient to prove that each nonzero term in this sum has valuation $v_r^*(pX_{r,s}^p)$.
So note
$$((-\Omega_{r,s}^{p^{n-r-1}})^p
+\Omega_{r,s}^{p^{n-r}})X_{r,r}^p=\begin{cases}p\Omega_{r,s}^{p^{n-r}}X_{r,r}^p&\mbox{for }p=2,\\
0&\mbox{for }p>2.\end{cases}$$ 
Furthermore, 
$v_r^*(
\binom{p}{i}X_{r,s}^i(-\Omega_{r,s}^{p^{n-r-1}}X_{r,r})^{p-i})=v_r^*(pX_{r,s}^p)$ for $1\leq i\leq p$,
since we have $v_r^*(X_{r,s})=v_r^*(\Omega_{r,s}^{p^{n-r-1}}X_{r,r})$.

Consider \eqref{goal-main} for $L_{r,s}X_{r,r}$, namely $v_0^*(L_{r,s}X_{r,r})> (u_{i_0}-u_s)-b_{i_0}/p^{i_0-1}$.
From Lemma \ref{lemma-L}, assuming $v_0(p)> C_n$, we have $v_0(L_{r,s})\geq v_0(p)+u_r-u_s$.
Since $v_0(p)> C_{i_0-1}-C_r$, this means that $v_0(L_{r,s})> C_{i_0-1}-C_r+u_r-u_s=(u_{i_0}-u_s)-b_{i_0}/p^{i_0-1}+b_r/p^r$.
Since $r<i_0$, $v_r^*(X_{r,r})=-b_r$, and so $v_0^*(L_{r,s}X_{r,r})> (u_{i_0}-u_s)-b_{i_0}/p^{i_0-1}$.
\end{proof}
Proposition \ref{v-Xi} completes the proof of Theorem \ref{elem-abel}.

\section{Elementary abelian examples and explicit Galois module structure}
\label{examples-sect}
In this section, we illustrate the explicit nature of what is possible
when one combines the results of this paper with those of
\cite{byott:A-scaffold}. We choose to do so in the context of two
classes of totally ramified extensions, biquadratic and weakly
ramified, that have a long history and for which explicit results
already exist.  Furthermore, since it can be done quickly, we also
apply our results to V.~Abrashkin's ``elementary extensions''
\cite{abrashkin}. All these results are in characteristic $0$.  For
analogous results in characteristic $p$, see
\cite[\S4]{byott:A-scaffold}.

\subsection{Biquadratic extensions} \label{biquadratic}

Let $K_0$ be a local field of characteristic $0$ with perfect residue
field of characteristic $2$, and let $K_2$ be a totally ramified
Galois extension of $K_0$ with $G=\mbox{Gal}(K_2/K_0)\cong
C_2\times C_2$. The structure of $\euO_2$ over its associated order
$\euA_{K_2/K_0}$ in $K_0[G]$ was investigated by B.~Martel
\cite{martel}. Here, we use \cite[Thm 3.1]{byott:A-scaffold} to
recover a large part of Martel's result, but also extend his result to
arbitrary ideals $\euP_2^h$. Exclude 
the case where $K_2/K_0$ contains a maximally ramified
quadratic subextension. (Martel's results include this case, and also
the case where $K_2/K_0$ is not totally ramified.)  Then the upper
ramification numbers $u_1 \leq u_2$ are both odd, and the lower
ramification numbers $b_1 \leq b_2$ are congruent modulo $4$. We then
have $2b_1+b_2=u_1+2u_2 \leq 6v_0(2)-3$.
Define
$$\euA_h=\{\alpha\in K_0[G]:\alpha\euP_2^h\subseteq\euP_2^h\}.$$
Note that $\euA_0=\euA_{K_2/K_0}$.
Martel's result is 
that $\euO_2$ is free over $\euA_0$ if and only if
\begin{equation} \label{martel}
  2b_1+b_2 \leq 4v_0(2) + 3(-1)^{(b_1-1)/2}. 
\end{equation}

In other words, \cite{martel} finds that
$\euO_2$ is always free over $\euA_0$ when
$v_0(2)$ is sufficiently large relative to $b_1$ and $b_2$.  In
Proposition \ref{biquad-prop} below, we find, also assuming $v_0(2)$
is sufficiently large, that $\euP_2^3$ is always free over $\euA_3$,
that $\euP_2$ is free over $\euA_1$ if and only if $b_1\equiv 1\bmod 4$,
and that $\euP_2^2$ is free over $\euA_2$ if and only if $b_1\equiv
3\bmod 4$. In each case, what we mean by ``sufficiently large'' is determined by
Proposition \ref{biq-l-0}
and the value of
$h$ in $\euP_2^h$. When $h=0$, our result excludes only one case covered
by \eqref{martel}, namely the case $2b_1+b_2 =4v_0(2)+3$ when $b_1
\equiv 1 \bmod 4$.

\begin{proposition} \label{biq-l-0}
Let $K_2/K_0$ be a totally ramified biquadratic extension in
characteristic $0$ whose lower ramification numbers satisfy 
$2b_1+b_2 < 4v_0(2)$. Then $K_2/K_0$ has a Galois scaffold of precision
$\euc=4v_0(2)-2b_1-b_2 \geq 1$.  
\end{proposition}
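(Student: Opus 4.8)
The plan is to deduce this from Theorem~\ref{elem-abel}, applied with $n=2$ and $p=2$, so the task is to verify Assumptions~\ref{elem-ab-1}, \ref{ass-epsilon} and \ref{strong-ass-p} for this extension and then read off the tolerance. First I would record the numerical consequences of the hypothesis. Since $u_1=b_1$ and $2u_2=b_1+b_2$ by \eqref{upper}, we have $u_1+2u_2=2b_1+b_2<4v_0(2)$; as $1\le u_1\le u_2$ this forces $u_2<2v_0(2)$, and hence $u_1<2v_0(2)$ as well. A totally ramified $C_2$-extension of $K_0$ in characteristic~$0$ has ramification break prime to $p=2$ unless the break equals $2v_0(2)$ (the exceptional case recalled just before Assumption~\ref{ass1}, cf.\ \cite[IV\S2 Exercise 3]{serre:local}); so $u_1$ and $u_2$ are both odd, and Assumption~\ref{elem-ab-1} holds, and with it Assumptions~\ref{ass1} and \ref{ass2} (in particular $b_1\equiv b_2\bmod 4$).

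Next I would compute the constants of \S\ref{elem-ab-main}: $m_2=(b_2-b_1)/4$, $C_1=u_1-b_1/2=b_1/2$, $C_2=u_2-b_2/4$, together with the identity $4C_2=4u_2-b_2=2b_1+b_2$. Put $\euT:=4v_0(2)-2b_1-b_2=4(v_0(2)-C_2)$, which is a positive integer by hypothesis. Then Assumption~\ref{strong-ass-p}, which reads $v_0(2)\ge C_2+\euT/4$, holds with equality; in particular $v_0(2)>C_2=C_n$, the inequality needed for Lemma~\ref{lemma-L} and Proposition~\ref{v-Xi}.

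The substantive point, and the step I expect to be the main obstacle, is Assumption~\ref{ass-epsilon}. For $n=2$ it asks only that the second Artin--Schreier generator can be chosen with $v_0(\epsilon_2)>-u_2+C_{n-1}=-u_2+b_1/2$, where $\wp(x_2)=\omega_2^2\beta+\epsilon_2$. I would obtain this from the fact that, up to a shift in valuation, squaring on $K_0^\times$ agrees with the Frobenius modulo $\euP_0^{v_0(2)}$: writing $\omega_2=\pi_0^{-m_2}\sum_{i\ge 0}w_i\pi_0^i$ (with $w_0$ a unit), the cross terms of $\omega_2^2$ lie in $\euP_0^{-2m_2+v_0(2)+1}$, so $\omega_2^2$ agrees with $\pi_0^{-2m_2}\sum_i w_i^2\pi_0^{2i}$ to that order. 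Since the residue field is perfect, one can solve successively (over the residue field) for the $w_i$ so that $\omega_2^2\beta$ matches the rigid, i.e.\ odd-valuation, coefficients of a representative of the class of $K_{(2)}$ in $K_0/\wp(K_0)$, and then absorb the remaining even-valuation discrepancy by replacing $x_2$ with $x_2+c$ for a suitable $c\in K_0$ (using that $\wp(K_0)$ meets every even negative valuation). This yields $v_0(\epsilon_2)\ge -u_2+v_0(2)+1$; and since $b_1\le 2v_0(2)-1$ gives $b_1/2<v_0(2)$, Assumption~\ref{ass-epsilon} follows. The only care needed is in checking that the Frobenius-approximation error from those cross terms stays outside the relevant range of valuations, which is exactly what the slack $b_1<2v_0(2)$ provides.

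With Assumptions~\ref{elem-ab-1}, \ref{ass-epsilon} and \ref{strong-ass-p} in hand, Theorem~\ref{elem-abel} (via Lemma~\ref{assum3} and Proposition~\ref{v-Xi}) produces a scaffold for the $K_0[G]$-action on $K_2/K_0$ with shift parameters $b_1,b_2$ and tolerance $\euT=4v_0(2)-2b_1-b_2\ge 1$, as claimed.
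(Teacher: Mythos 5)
Your overall framework mirrors the paper's: verify Assumptions \ref{elem-ab-1}, \ref{ass-epsilon} and \ref{strong-ass-p} and invoke Theorem \ref{elem-abel}. The preliminary bookkeeping — parity of $u_1,u_2$, the values $C_1,C_2$, and the observation that Assumption \ref{strong-ass-p} holds with equality for $\euT=4v_0(2)-2b_1-b_2$ — is correct and agrees with the paper.

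For Assumption \ref{ass-epsilon} you take a genuinely different route, and this is where there is a gap. The paper proves the bound $v_0(\epsilon)\geq -2m$ by an explicit two-case iteration: when $v_0(\epsilon)$ is even it replaces $x_1$ by $x_1'=(1+2\eta)x_1-\eta$ with $\eta^2\omega^2\equiv\epsilon$, so that $\beta$ itself is modified; when $v_0(\epsilon)$ is odd it replaces $\omega$ by $\omega+\phi$ with $\beta\phi^2\equiv\epsilon$. You aim instead for the (different, but also sufficient, since $b_1/2<v_0(2)$) bound $v_0(\epsilon_2)\geq -u_2+v_0(2)+1$ by a Frobenius-approximation matching that keeps $\beta$ fixed and adjusts only $\omega_2$ and $x_2$. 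The problem is the step "replace $x_2$ with $x_2+c$": in characteristic $0$ this does not shift the Artin--Schreier constant by $\wp(c)$, because $\wp(x_2+c)=\alpha_2+\wp(c)+2x_2c$ and the cross-term $2x_2c$ is not in $K_0$. To kill an even-valuation coefficient of $\alpha_2$ you must instead use the substitution $x_2'=(1-2c)x_2+c$, which gives $\wp(x_2')=(1-2c)^2\alpha_2+\wp(c)=\alpha_2+\wp(c)+(4c^2-4c)\alpha_2$; the extra term $(4c^2-4c)\alpha_2$ then has to be estimated, exactly the sort of estimate the paper carries out for $4\eta\beta\omega^2$ and its companions. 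Your appeal to "$\wp(K_0)$ meets every even negative valuation" is true but does not by itself license the replacement. With this fix the iterative scheme can be pushed through (and the "solve successively for the $w_i$" step, though under-justified, does work because the leading coefficient of $\beta$ is a unit, so the system is triangular and each adjustment propagates only to higher valuations), but the corrected argument is no simpler than the paper's and you have not supplied those estimates.
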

\begin{proof}
The condition $2b_1+b_2 < 4v_0(2)$ ensures that $u_2 < 2 v_0(2)$, so
that $u_1$, $u_2$ are indeed odd. We have $b_1=u_1$, $b_2=b_1+4m$,
$u_2=u_1+2m$ for some integer $m \geq 0$. Then $K_2=K_0(x_1,x_2)$ with
$\wp(x_1)=\beta\in K_0$ where $v_0(\beta)=-b_1$ and
$\wp(x_2)=\omega^2\beta+\epsilon$ where $\omega,\epsilon\in K_0$ with
$v_0(\omega)=-m$ and $v_0(\epsilon)>-u_2$.

We now show that without loss of generality we may assume
$v_0(\epsilon) \geq -2m$.  If $v_0(\epsilon) < -2m$, there are two
cases: If $v_0(\epsilon)$ is even, take $\eta \in K_0$ with $\eta^2
\omega^2 \equiv \epsilon \bmod \euP_0 \epsilon$. Then $-u_1/2 <
v_0(\eta)= v_0(\epsilon)/2 + m <0$, and $v_0(2 \eta) > v_0(2)-u_1/2
>0$. Set $x'_1=(1+2\eta)x_1 - \eta$. Then $v_1(x'_1)=v_1(x_1)=-u_1$,
and we calculate $\wp(x'_1)=\beta'$ where $\beta'=\beta +
\eta(\eta+1)(1+4\beta) \in K_0$.  We may therefore replace $x_1$ by
$x'_1$, $\beta$ by $\beta'=\wp(x'_1)$, and $\epsilon$ by
$\epsilon'=\wp(x_2)-\beta'\omega^2$ and find
$v_0(\epsilon')>v_0(\epsilon)$. If $v_0(\epsilon)$ is odd, take $\phi
\in K_0$ so that $\beta \phi^2 \equiv \epsilon \bmod \euP_0
\epsilon$. Then $v_0(\phi)= (v_0(\epsilon)+u_1)/2 > -m$ and also
$v_0(2 \beta \omega \phi) > v_0(\beta \phi^2)$ since $v_0(\epsilon) <
-2m \leq 0 < 2v_0(2)-u_2$. Then $\wp(x_2)=\beta(\omega+\phi)^2 +
\epsilon'$ with $\epsilon'=\epsilon - \beta \phi^2 -2\beta \omega
\phi$ where $v_0(\epsilon')>v_0(\epsilon)$.  We may therefore replace
$\omega$ by $\omega+\phi$ and $\epsilon$ by $\epsilon'$ and find
$v_0(\epsilon')>v_0(\epsilon)$.  Repeat these steps as necessary until
$v_0(\epsilon) \geq -2m$.

The existence of a Galois scaffold follows from Theorem \ref{elem-abel}
once we verify Assumptions \ref{elem-ab-1}, \ref{ass-epsilon} and
\ref{strong-ass-p}. Assumption \ref{elem-ab-1} is clear, and  
Assumption \ref{ass-epsilon} is the statement that $v_0(\epsilon) >
-u_2 + C_1 = -u_1/2 -2m$, which holds since $v_0(\epsilon) \geq -2m$. 
Assumption \ref{strong-ass-p} for $\euc\geq 1$ is equivalent to
$4v_0(2) \geq 2b_1+b_2+ \euc$. 
\end{proof}

\begin{proposition}\label{biquad-prop}
  Let $K_2$ be a totally ramified biquadratic extension of $K_0$, with
  lower ramification numbers satisfying $2b_1+b_2 < 4v_0(2)$.
\begin{itemize}
\item[(i)] If  $b_1 \equiv 1 \bmod 4$
  then $\euP_2^h$ is free over $\euA_h$ when $h \equiv 0,1 \bmod 4$
  and $2b_1+b_2 \leq 4v_0(2)-1$, or when $h \equiv 3 \bmod 4$ and
  $2b_1+b_2 \leq 4v_0(2)-5$. Moreover, $\euP_2^h$ is not free over
  $\euA_h$ when $h \equiv 2 \bmod 4$ and $2b_1+b_2 \leq 4v_0(2)-9$.
\item[(ii)] If $b_1 \equiv 3 \bmod
  4$ then $\euP_2^h$ is free
  over $\euA_h$ when $h \equiv 0, 2, 3 \bmod 4$ and $2b_1+b_2 \leq
  4v_0(2)-3$. Moreover, $\euP_2^h$ is not free
  over $\euA_h$ when $h \equiv 1 \bmod 4$ and $2b_1+b_2 \leq
  4v_0(2)-7$.
\end{itemize}
\end{proposition}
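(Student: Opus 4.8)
The plan is to obtain the scaffold from Proposition \ref{biq-l-0} and feed it into the general freeness criterion \cite[Thm 3.1]{byott:A-scaffold}. Each hypothesis occurring in (i) and (ii) is at least as strong as $2b_1+b_2\leq 4v_0(2)-1$, hence forces $2b_1+b_2<4v_0(2)$; so Proposition \ref{biq-l-0} applies and $K_2/K_0$ carries a Galois scaffold with shift parameters $b_1\leq b_2$ and tolerance $\euT=4v_0(2)-2b_1-b_2\geq 1$. The proof of Proposition \ref{biq-l-0} also shows, from $2b_1+b_2<4v_0(2)$, that $u_1,u_2$ are odd and therefore $b_2\equiv b_1\bmod 4$; thus $2b_1+b_2\equiv 3b_1\bmod 4$, which is $\equiv 3$ in case (i) and $\equiv 1$ in case (ii), so that $\euT\in 1+4\bZ$ in case (i) and $\euT\in 3+4\bZ$ in case (ii).

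Next I would invoke \cite[Thm 3.1]{byott:A-scaffold}: given a scaffold of tolerance $\euT$ with shift parameters $b_1,\dots,b_n$, that theorem decides, for each $h\in\bZ$, whether $\euP_L^h$ is free over its associated order by an explicit arithmetic condition on $h$ and the $b_i$, valid provided $\euT$ exceeds a threshold $\euT_0(h)$ itself read off from the same numerical data. Specializing to $p=2$, $n=2$, the relevant data are the four values $\eub(0)=0$, $\eub(1)=b_2$, $\eub(2)=2b_1$, $\eub(3)=2b_1+b_2$ together with the inverse function $\eua(t)\equiv -b_1^{-1}t\bmod 4$. A direct evaluation of the criterion over the residue classes of $h$ modulo $4$ --- split according to whether $b_1\equiv 1$ or $3\bmod 4$, since this is what changes $\eua$ --- shows that its condition holds, so $\euP_2^h$ is free over $\euA_h$, exactly for $h\equiv 0,1,3\bmod 4$ in case (i) and $h\equiv 0,2,3\bmod 4$ in case (ii), and fails, so $\euP_2^h$ is not free, for $h\equiv 2\bmod 4$ in case (i) and $h\equiv 1\bmod 4$ in case (ii). Each decision comes with its threshold $\euT_0(h)$, and the stated inequalities are precisely ``$\euT\geq\euT_0(h)$'' rewritten, via $\euT=4v_0(2)-2b_1-b_2$, as $2b_1+b_2\leq 4v_0(2)-\euT_0(h)$: the computation yields $\euT_0=1$ for $h\equiv 0,1\bmod 4$ and $\euT_0=5$ for $h\equiv 3\bmod 4$ in case (i), $\euT_0=9$ for $h\equiv 2\bmod 4$ in case (i), and $\euT_0=3$ for $h\equiv 0,2,3\bmod 4$, $\euT_0=7$ for $h\equiv 1\bmod 4$ in case (ii).

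The main obstacle, as I see it, is purely bookkeeping: correctly reading off $\euT_0(h)$ from \cite[Thm 3.1]{byott:A-scaffold} in each residue class --- in particular accounting for why the threshold jumps by multiples of $p^n=4$ above the congruence-forced minimum ($1$ in case (i), $3$ in case (ii)) --- and verifying that the normalization made in the proof of Proposition \ref{biq-l-0}, replacing the Artin-Schreier data by $\wp(x_1)=\beta$ and $\wp(x_2)=\omega^2\beta+\epsilon$ with $v_0(\epsilon)\geq -2m$, loses no totally ramified biquadratic extension. With the scaffold in hand and the general criterion available, everything else is a finite, if somewhat tedious, computation with $p=2$.
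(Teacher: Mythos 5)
Your proposal is correct and follows essentially the same route as the paper: obtain the scaffold (with tolerance $\euT=4v_0(2)-2b_1-b_2$) from Proposition \ref{biq-l-0}, feed the shift parameters and tolerance into the freeness criterion of \cite[Thm 3.1]{byott:A-scaffold} (checking $w(s)=d(s)$ residue class by residue class with $p=2$, $n=2$), and translate the tolerance thresholds back into bounds on $2b_1+b_2$ using the congruence $\euT\equiv 1\bmod 4$ (case (i)) or $\euT\equiv 3\bmod 4$ (case (ii)). Your stated thresholds agree with the paper's values $L_1$, $L_2$ in its Table, after rounding up to the next admissible value of $\euT$ in the appropriate residue class, so the bookkeeping you flag as the remaining obstacle is in fact carried out correctly.
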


\begin{table}  \label{biq0}  
\centerline{ 
\begin{tabular}{|r|r| r r |} \hline
$b$ & $h$ & $L_1$  & $L_2$ \\ \hline
1 & 1  &  4 & 1 \\ \hline
1 & 0 & 5 & 1\\ \hline
1 & -1 & 6 & 2 \\ \hline
1 & -2 & 7 & 3 \\ \hline
3 & 3 & 4 & 1\\ \hline
3 & 2 & 5 & 1\\ \hline
3 & 1 & 6 & 2 \\ \hline
3 & 0  & 7 & 3  \\ \hline 
\end{tabular}
}  
\vskip5mm

\caption{Values of the bounds on $\euc$  in the
  biquadratic case.} 
\end{table}

\begin{proof}
The bounds on $2b_1+b_2$ ensure that we always have $b_1 \equiv b_2
\equiv 1,3 \bmod 4$. So let $b\in\{1,3\}$ with $b \equiv b_2
\bmod 4$. Freeness of $\euP_2^h$ over $\euA_h$ depends only on the
residue of $h\bmod 4$. So we may assume $0 \leq b-h <4$. This choice
of $b$ and $h$ is consistent with the notation of
\cite[\S3.1]{byott:A-scaffold}.  The 
values of $L_1:=4+b-h$ and $L_2:=\max(1,b-h)$ are as shown in Table
1.  A brute force check, as in the proof of \cite[Thm
  4.4]{byott:A-scaffold}, determines that the condition $w(s)=d(s)$ of
\cite[Thm 3.1]{byott:A-scaffold} holds for all $s$ except when
$b=1$, $h=-2$ or $b=3$, $h=1$. This gives the result; the
bounds on $2b_1 +b_2$ in each case come from the fact that $\euc$ in
Proposition \ref{biq-l-0} must be bounded below by either $L_1$ (for
the assertion that $\euP_2^h$ is not free) or $L_2$ (for the
assertion that it is free), together with the observation that
$2b_1+b_2 \equiv 3$ (respectively, 1) mod 4 if $b \equiv 1$
(respectively, 3) mod 4.
\end{proof}

\subsection{Weakly ramified extensions}

A Galois extension of local fields is said to be weakly ramified if
its second ramification group is trivial. An extension of global
fields is weakly ramified if all its completions are. In a weakly
ramified extension $L/K$ of odd degree, there is a fractional ideal of $\euO_L$
whose square is the inverse different. It was shown by B.~Erez
\cite{erez} that this ideal is locally free over the group ring
$\euO_K[\Gal(L/K)]$. This led several subsequent authors (see for
example \cite{vinatier}, \cite{pickett}) to investigate the square
root of the inverse different in weakly ramified extensions, both of
number fields and of local fields. The valuation ring, and its maximal
ideal, in a weakly ramified (but not necessarily totally ramified)
extension of local fields are studied as Galois modules in
\cite{johnston}.

Here we consider totally and weakly ramified Galois extensions $K_n/K_0$
of degree $p^n$, where $K_0$ is a local field whose residue field is
perfect of characteristic $p$. Thus $K_n/K_0$ is necessarily
elementary abelian. It is known that the valuation ring
$\euO_n$ is free over its associated order. This
can be proved using Lubin-Tate theory \cite[Cor 4.3]{byott:lubin-tate}
when the residue field is finite, but can also be deduced directly
from Erez' result; see also \cite{johnston}. In this section, we will
use \cite[Thm 3.1]{byott:A-scaffold} to give an alternative proof of
this result, while at the same time determining the structure of the
other ideals. Thus we define
$$\euA_h=\{\alpha\in K_0[G]:\alpha\euP_n^h\subseteq\euP_n^h\}.$$
Again, note that $\euA_0=\euA_{L/K}$.
We begin with the
fact that a Galois scaffold exists. Note that in 
characteristic $p$, $K_n/K_0$ has a Galois scaffold of
precision $\infty$ \cite{elder:scaffold}.  

\begin{proposition}\label{weakly0}
Let $K_0$ be a local field of characteristic $0$ whose residue field is
perfect of characteristic $p$, and let $K_n/K_0$ be a totally and
weakly ramified Galois extension of $K_0$ of degree $p^n$. Then $K_n/K_0$ has a Galois
scaffold of precision $\euc = p^n v_0(p)-(p^n-1)$.
\end{proposition}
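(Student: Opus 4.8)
The plan is to verify that a totally and weakly ramified extension $K_n/K_0$ of degree $p^n$ fits the framework of \S\ref{elem-ab-main} and then quote Theorem \ref{elem-abel}; when $n=1$ this is simply a cyclic degree $p$ extension with ramification number $u=1$, so Theorem \ref{C_p} already gives a scaffold of tolerance $pv_0(p)-(p-1)=p^nv_0(p)-(p^n-1)$, and we may assume $n>1$. First I would read off the ramification data from the definition of \emph{weakly ramified}: $G_2=\{e\}$ in the lower numbering, while total wild ramification forces $G_0=G_1=G$, so the ramification filtration has a single break, at $1$, with multiplicity $n$. Hence $G\cong C_p^n$ and all lower ramification numbers equal $b_i=1$; by \eqref{upper} all upper ramification numbers equal $u_i=1$ as well. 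In particular $p\nmid u_1=1$ and the congruences $u_i\equiv u_1\bmod p^{n-1}$ hold trivially, so Assumption \ref{elem-ab-1} is satisfied and we have an Artin-Schreier description $K_n=K_0(x_1,\dots,x_n)$ as in \S\ref{elem-ab-main}.

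Next I would feed this numerical data into the remaining two assumptions. Since $u_i=u_1$ for all $i$, the integers $m_k=v_0(\omega_{k-1})-v_0(\omega_k)$ all vanish, so every $\omega_i$ is a unit of $\euO_0$; and from $\wp(x_i)=\omega_i^{p^{n-1}}\beta+\epsilon_i$ with $\epsilon_i\in K_0$ and $v_0(\epsilon_i)>-u_i=-1$ we get $v_0(\epsilon_i)\geq 0$, i.e. the "naive'' error terms already lie in $\euO_0$ with no adjustment of generators needed. Computing $C_i=u_i-b_i/p^i=1-p^{-i}$, Assumption \ref{ass-epsilon} reads $v_0(\epsilon_i)>-u_i+C_{n-1}=-p^{-(n-1)}$, which holds (with exactly the right margin) because $v_0(\epsilon_i)$ is a non-negative integer.

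Finally I would check Assumption \ref{strong-ass-p}: with $C_n=1-p^{-n}$, the condition $v_0(p)\geq C_n+\euT/p^n$ is, after multiplying through by $p^n$, the inequality $p^nv_0(p)\geq (p^n-1)+\euT$. Taking $\euT=p^nv_0(p)-(p^n-1)$ turns this into an equality, and $\euT\geq 1$ because $v_0(p)\geq 1$ in characteristic $0$. Theorem \ref{elem-abel} then yields a scaffold of tolerance $\euT$ with shift parameters $b_1=\dots=b_n=1$, which is the assertion. I do not expect a real obstacle: the argument is bookkeeping, the only point requiring care being the translation of the definition of weak ramification into the data $b_i=u_i=1$, $m_k=0$, $\epsilon_i\in\euO_0$, and the observation that, unlike the biquadratic situation of Proposition \ref{biq-l-0}, here the default Artin-Schreier generators already satisfy Assumption \ref{ass-epsilon}.
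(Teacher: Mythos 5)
Your proposal is correct and follows essentially the same route as the paper: handle $n=1$ via Theorem \ref{C_p}, observe $b_i=u_i=1$ so the extension is elementary abelian, verify Assumptions \ref{elem-ab-1}, \ref{ass-epsilon} and \ref{strong-ass-p} (noting $\epsilon_i\in\euO_0$ automatically), and invoke Theorem \ref{elem-abel}. You work out the numerics ($m_k=0$, $C_i=1-p^{-i}$) slightly more explicitly than the paper, but the argument is the same.
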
 
\begin{proof}
The hypothesis means that $K_n/K_0$ is elementary abelian of degree
$p^n$, with $b_i=u_i=1$ for $1 \leq i \leq n$. When $n=1$, the result follows from
Theorem \ref{C_p}. When  $n
\geq 2$, the result will follow from
Theorem \ref{elem-abel} provided that Assumptions \ref{elem-ab-1},
\ref{ass-epsilon} and \ref{strong-ass-p} hold. Using \cite[III \S2.5
  Prop]{fesenko},
$K_n=K_0(x_1,\ldots ,x_n)$ where
$\wp(x_i)=\omega_i^{p^{n-1}}\beta+\epsilon_i$ for some $\beta\in K_0$
with $v_0(\beta)=-1$, some $\omega_i\in\euO_0\setminus\euP_0$ which, in the
residue field $\euO_0/\euP_0$, are linearly independent over
$\mathbb{F}_p$, and some $\epsilon_i\in\euO_0$. Since $b_i=u_i=1$ for all
$i$, 
Assumptions \ref{elem-ab-1} and
\ref{ass-epsilon} hold. Furthermore Assumption 6 holds
since by choice of $\euc$, $v_0(p)= 1-1/p^n+\euc/p^n=C_n+\euc/p^n$. 
\end{proof}

We now give a new proof of the fact that the valuation ring of a
totally and weakly ramified extension is free over its associated
order, and that the square root of the inverse different is free
over the group ring. Our proof depends on the existence of a Galois
scaffold, and works both in characteristic $p$ and characteristic $0$,
with no hypothesis on $v_0(p)$. 

\begin{proposition} \label{weakly-struct-both}
Let $K_n/K_0$ be a totally and weakly ramified Galois extension of
local fields, and let $G=\Gal(K_n/K_0)$. Then $\euP_n$ (respectively,
$\euO_n$) is free over $\euO_0[G]$ (respectively, over $\euO_0[G]
[\pi_0^{-1} \mbox{\rm Tr}_{n,0}]$, where $\mbox{\rm Tr}_{n,0}=\sum_{g \in G} g$ is the trace
element in $K_0[G]$), and any element of $K_n$ of valuation $1$ is a
generator.
\end{proposition}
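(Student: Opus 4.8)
The plan is to run the Galois-scaffold machinery directly, with the minimal tolerance, rather than to invoke the numerical criterion of \cite[Thm 3.1]{byott:A-scaffold}; this treats both characteristics at once and needs no condition on $v_0(p)$. First, since $K_n/K_0$ is totally and weakly ramified of degree $p^n$ it is elementary abelian with $b_i=u_i=1$ for all $i$, and by Proposition \ref{weakly0} (if $\mathrm{char}\,K_0=0$) or \cite{elder:scaffold} (if $\mathrm{char}\,K_0=p$) there is a scaffold $\{\lambda_t\}$, $\{\Psi_i\}$ for the action of $K_0[G]$ on $K_n/K_0$ with all shift parameters equal to $1$ and some tolerance $\euT\geq 1$. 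In the construction underlying those results the main terms $\mu_{i,j}$ lie in $K_0$ with $v_0(\mu_{i,j})=(b_i-b_j)/p^j=0$ (Lemma \ref{assum3}), so $\mu_{i,j}\in\euO_0^\times$ and hence $\binom{-\mu_{i,j}}{s}\in\euO_0$ for $0\leq s<p$ (the factor $s!$ being a unit of $\euO_0$); by Definition \ref{Theta} this gives $\Theta_i,\Psi_i\in\euO_0[G]$, so every monomial $\Psi^{(a)}:=\Psi_1^{a_{(n-1)}}\Psi_2^{a_{(n-2)}}\cdots\Psi_n^{a_{(0)}}$, for $0\leq a<p^n$ with base-$p$ digits $a_{(n-1)},\dots,a_{(0)}$, lies in $\euO_0[G]$.

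Next I would treat $\euP_n$. Fix $\rho\in K_n$ with $v_n(\rho)=1$. Applying $\Psi_n,\Psi_{n-1},\dots,\Psi_1$ in turn to $\lambda_1$ and using the defining congruences of the scaffold (here $b=1$, so $\eua(j)\equiv-j\bmod p^n$), one checks that the digit conditions $\eua(\,\cdot\,)_{(n-i)}\geq 1$ are never violated — because all shift parameters are $1$, the running index stays in $[1,p^n]$ and the relevant digit, namely $p-1$ at the outset, only drops by $1$ at each step — giving $\Psi^{(a)}\lambda_1\equiv\lambda_{1+a}$ modulo $\lambda_{1+a}\euP_n^{\euT}$ for $0\leq a<p^n$. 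Since $\{\lambda_t:1\leq t\leq p^n\}$ is an $\euO_0$-basis of $\euP_n$, write $\rho=\sum_{t=1}^{p^n}s_t\lambda_t$ with $s_t\in\euO_0$ and $s_1\in\euO_0^\times$; as each $\Psi_i$ raises $v_n$ by at least $p^{n-i}$, the $t=1$ summand of $\Psi^{(a)}\rho$ strictly dominates, so $v_n(\Psi^{(a)}\rho)=1+a$ for every $a$. Thus the $p^n$ elements $\Psi^{(a)}\rho$ have valuations $1,2,\dots,p^n$, and a triangular change-of-basis argument shows they form an $\euO_0$-basis of $\euP_n$. Hence $\euP_n=\sum_a\euO_0\Psi^{(a)}\rho\subseteq\euO_0[G]\rho\subseteq\euP_n$, so $\euO_0[G]\rho=\euP_n$; since $\euO_0[G]$ and $\euP_n$ are both free of rank $p^n$ over the discrete valuation ring $\euO_0$, the surjection $\alpha\mapsto\alpha\rho$ is an isomorphism, i.e.\ $\euP_n$ is $\euO_0[G]$-free on $\rho$, and consequently $\euA_1=\euO_0[G]$.

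For $\euO_n$, write $R=\euO_0[G][\pi_0^{-1}\Tr_{n,0}]$. From $\Tr_{n,0}\cdot\alpha=\varepsilon(\alpha)\Tr_{n,0}$ for $\alpha\in\euO_0[G]$ ($\varepsilon$ the augmentation) and $(\pi_0^{-1}\Tr_{n,0})^2=(\pi_0^{-1}p^n)\,\pi_0^{-1}\Tr_{n,0}$ with $\pi_0^{-1}p^n\in\euO_0$, one gets $R=\euO_0[G]+\euO_0\,\pi_0^{-1}\Tr_{n,0}$, which is free of rank $p^n$ over $\euO_0$ (it spans $K_0[G]$ over $K_0$). Hilbert's formula gives $v_n(\euD_{K_n/K_0})=2(p^n-1)$, whence $\Tr_{n,0}\euP_n=\euP_0$ while $\Tr_{n,0}\euP_n^2\subseteq\euP_0^2$; the induced $\euO_0/\euP_0$-linear map $\euP_n/\euP_n^2\to\euP_0/\euP_0^2$ is therefore nonzero between one-dimensional spaces, hence injective, so $v_0(\Tr_{n,0}\rho)=1$ and $\pi_0^{-1}\Tr_{n,0}\rho\in\euO_0^\times$ for any $\rho$ with $v_n(\rho)=1$. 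Then $R\rho=\euO_0[G]\rho+\euO_0\,\pi_0^{-1}\Tr_{n,0}\rho=\euP_n+\euO_0=\euO_n$, the last equality holding because $K_n/K_0$ is totally ramified, so $\euO_0\twoheadrightarrow\euO_n/\euP_n$; as before the surjection $R\to\euO_n$ of free rank-$p^n$ $\euO_0$-modules is an isomorphism, so $\euO_n$ is $R$-free on $\rho$ and $\euA_0=R$.

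The technical heart — and the main obstacle — is the digit bookkeeping in the second step: confirming that the base-$p$ digit conditions are never violated as the $\Psi_i$ are applied one after another, so that $\Psi^{(a)}\rho$ has valuation \emph{exactly} $1+a$ and not merely $\geq 1+a$. This is precisely where weak ramification intervenes: with every shift parameter equal to $1$ the indices never leave the low range $[1,p^n]$. A secondary point needing care is the identification $R=\euO_0[G]+\euO_0\,\pi_0^{-1}\Tr_{n,0}$ together with the computation $v_0(\Tr_{n,0}\rho)=1$, both of which are what pin down the statement about $\euO_n$; in characteristic $p$ the tolerance is $\infty$ and these arguments simplify.
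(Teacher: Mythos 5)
Your proof is correct. It is essentially the same strategy as the fallback argument the paper gives in its last paragraph (``We nevertheless give a proof which works more generally\ldots''), but you apply it uniformly and avoid the machinery of \cite[Thm 3.1]{byott:A-scaffold} altogether, so there is no case split on $v_0(p)$. The paper first invokes \cite[Thm 3.1(iv)]{byott:A-scaffold} for freeness and the numerical data $d(s)=w(s)$ to identify $\euA_1$ and $\euA_0$, and only afterwards falls back to the direct valuation computation to cover $v_0(p)=1$; you go straight to the direct computation and get everything at once. Two small points where you improve on or diverge from the paper's exposition: (i) you observe that $v_0(\mu_{i,j})=(b_i-b_j)/p^j=0$ (so all $\mu_{i,j}$ are units, making $\binom{-\mu_{i,j}}{s}\in\euO_0$ manifest) to get $\Psi_i\in\euO_0[G]$, whereas the paper appeals to $\omega_i\in\euO_0$ in the underlying construction — both work, yours is a touch more self-contained; (ii) you explicitly verify $v_0(\Tr_{n,0}\rho)=1$ via Hilbert's formula and the induced map $\euP_n/\euP_n^2\to\euP_0/\euP_0^2$, a detail the paper leaves implicit in the step $(\euO_0[G]+\euO_0\pi_0^{-1}\Tr_{n,0})\pi_n=\euO_n$. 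The digit bookkeeping ($\eua(1)=p^n-1$ has all base-$p$ digits $p-1$, each $\Psi_i$ decrements the $(n-i)$th digit by one, so $\Psi^{(a)}\lambda_1\equiv\lambda_{1+a}$ holds for every $0\le a<p^n$) is exactly the point referenced as \cite[(5)]{byott:A-scaffold} in the paper, and your verification of it is sound. The rank argument at the end — a surjection of free $\euO_0$-modules of the same finite rank is an isomorphism — correctly replaces the need for \cite[Thm 3.1(iv)]{byott:A-scaffold}.
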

\begin{proof}
We are interested in the Galois module structure of $\euP_n^h$ for
$h=0$ and $h=1$. Since $b_i=1$ for all $i$, we may assume that
$b=1$. Our notation, namely $h,b$, is consistent with the notation in
\cite[\S3.1]{byott:A-scaffold}. Using Proposition \ref{weakly0},
$K_n/K_0$ has a Galois scaffold of precision $\euc \geq
1=\max(b-h,1)$. The numbers $d(s)$ and $w(s)$ occurring in  
\cite[Thm 3.1]{byott:A-scaffold} are easy to determine in this case
(as in the proof of \cite[Thm 4.5]{byott:A-scaffold}); we find when
$h=1$ that $d(s)=w(s)=0$ for all
$s\in\mathbb{S}_{p^n}=\{s:0\leq s<p^n\}$, and when $h=0$ that 
$d(s)=w(s)=0$ for all $s \neq p^n-1$ and $d(p^n-1)=w(p^n-1)=1$. Since
$d(s)=w(s)$ for all $s$, both $\euP_n$ and $\euO_n$ are free over
their associated orders by \cite[Thm 3.1(iv)]{byott:A-scaffold}. We
now identify those associated orders.
For $h=1$, we have $w(s)=0$ for all $s$, so $\euA_1 =
\euO_0[\Psi_1, \ldots, \Psi_n]$. Since in the proof of Proposition
\ref{weakly0} and in \cite[Prop 5.3]{elder:scaffold}, we have $\omega_i \in \euO_0$ for all $i$, it follows
from the construction of the Galois scaffold that $\Psi_i \in
\euO_0[G]$ for all $i$. Hence $\euA_1 \subseteq \euO_0[G]$. Since the
reverse inclusion certainly holds, we have $\euA_1=\euO_0[G]$ when
$h=1$. When $h=0$, we have $w(s)=0$ for $s \neq p^n-1$ and
$w(p^n-1)=1$, so $\euA_0=\euO_0[G] + \euO_0 \pi_0^{-1}
\Psi^{(p^n-1)}$. Thus $\euA_0/\euO_0[G]$ has dimension 1 over
$\euO_0/\euP_0$. But $\pi_0^{-1} \mbox{Tr}_{n,0} \in \euA_0$ since
$\mbox{Tr}_{K_n/K_0}(\euO_n) \subsetneq \euO_0$ because $K_n/K_0$ is wildly
ramified.  It follows that $\euA_0=\euO_0[G] + \euO_0 \pi_0^{-1} \mbox{Tr}_{n,0}$.

We next show that any $\pi_n \in K_n$ of valuation $1$ is a free
generator for $\euP_n$ and $\euO_n$.  This follows from \cite[Thm
  3.1(ii)]{byott:A-scaffold} if $v_0(p)\geq 2$, because then $\euc\geq
p^n+b-h$. So we need only consider the case $v_0(p)=1$, when $K_0$ has
characteristic $0$ and is unramified over the $p$-adic numbers. We nevertheless
give a proof which works more generally.
We may write $\pi_n=
\sum_{i=1}^{p^n}a_i\lambda_{i}$ for some $a_i\in\euO_0$
with $v_0(a_{0})=0$. Since, by Theorem \ref{main},
the $\Psi_j$ are $K_0$-linear maps, we observe as in \cite[(5)]{byott:A-scaffold} that 
$v_n(\Psi^{(j)}\lambda_i)\geq i+j$ with $v_n(\Psi^{(j)}\lambda_1)= 1+j$. 
It follows that 
$v_n(\Psi^{(s)} \pi_n) =
s+1$ for all $s \in \Spn$. Thus $\euO_0[G] \pi_n = \euP_n$
and $(\euO_0[G] + \euO_0 \pi_0^{-1}\mbox{\rm Tr}_{n,0}) \pi_n = \euO_n$, as required.
\end{proof}

\begin{remark}
The valuation of the different of $K_n/K_0$ is $2(p^n-1)$, so the
square root of the inverse different is $\euP_n^{1-p^n}$, which is
isomorphic to $\euP_n$. The fact that the square root of the inverse
different is free over the group ring $\euO_0[G]$ therefore follows
from the case $h=1$ in
Proposition \ref{weakly-struct-both}.
\end{remark}

We now use the Galois scaffold of Proposition \ref{weakly0} to
determine which other ideals $\euP_n^h$ are free over their associated
orders.  

\begin{proposition} \label{weakly-struct0}
Let $K_n/K_0$ be as in Proposition \ref{weakly0} with $v_0(p) \geq 3$.
Then $\euP_L^h$ is free over 
its associated order if and only if $h \equiv h' \bmod p^n$ where
$h'=0$, $h'=1$, or $\frac{1}{2}(p^n+1) < h' < p^n$.
\end{proposition}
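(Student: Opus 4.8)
The plan is to feed the Galois scaffold of Proposition~\ref{weakly0} into \cite[Thm~3.1]{byott:A-scaffold}, reproducing in characteristic~$0$ the argument used in characteristic~$p$ in \cite[\S4.6]{byott:A-scaffold}. Since $\euP_n^{p^n}=\pi_0\euO_n$ and multiplication by $\pi_0$ is a $K_0[G]$-isomorphism carrying the associated order of $\euP_n^h$ onto that of $\euP_n^{h+p^n}$, freeness of $\euP_n^h$ over its associated order depends only on $h$ modulo $p^n$. First I would fix, for each residue class, the representative $h$ with $1-p^n<h\leq 1$, and, since $b_1=\cdots=b_n=1$, take $b=1$ in the notation of \cite[\S3.1]{byott:A-scaffold}, so that $0\leq b-h<p^n$.

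Next I would record that, by Proposition~\ref{weakly0}, $K_n/K_0$ carries a Galois scaffold of tolerance $\euT=p^nv_0(p)-(p^n-1)$, whence $v_0(p)\geq 3$ gives $\euT\geq 2p^n+1$. This is exactly where the hypothesis $v_0(p)\geq 3$ is spent: in \cite[Thm~3.1]{byott:A-scaffold} the assertion that $\euP_n^h$ is \emph{not} free needs $\euT$ to dominate a bound of the shape $p^n+(b-h)\leq 2p^n-1$, while the assertion that it \emph{is} free needs only $\euT\geq\max(1,b-h)\leq p^n-1$; both are comfortably satisfied for every residue class of $h$. Hence \cite[Thm~3.1(iv)]{byott:A-scaffold} applies throughout, and $\euP_n^h$ is free over its associated order if and only if $d(s)=w(s)$ for all $s\in\Spn$.

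The remaining work is the explicit evaluation of the integers $d(s)$ and $w(s)$ attached to $\euP_n^h$ in the case $b_1=\cdots=b_n=1$. Here $\eub(a)\equiv a\bmod p^n$, so the quantities feeding \cite[Thm~3.1]{byott:A-scaffold} collapse to elementary base-$p$ digit and floor-function bookkeeping, identical to the computation already carried out in \cite[\S4.6]{byott:A-scaffold} and unaffected by the characteristic of $K_0$. Running through it shows that $d(s)=w(s)$ holds for every $s$ precisely when the chosen representative satisfies $h=0$, $h=1$, or $\tfrac{1}{2}(p^n+1)<h<p^n$, which (re-expressed as $h\equiv h'\bmod p^n$ with $h'$ in one of these ranges) is the statement. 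The step to watch is the tolerance bookkeeping of the second paragraph: it is the only genuinely new ingredient beyond \cite[\S4.6]{byott:A-scaffold}, and with $v_0(p)\geq 3$ forcing $\euT\geq 2p^n-1$ the characteristic-$p$ combinatorics transfers without change.
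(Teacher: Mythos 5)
Your proof is correct and follows essentially the same route as the paper's: quote Proposition~\ref{weakly0} for the scaffold, compute that $v_0(p)\geq 3$ gives tolerance $\euT\geq 2p^n-1$ (you in fact get $\euT\geq 2p^n+1$, which is fine), invoke \cite[Thm~3.1]{byott:A-scaffold}, and observe that the $d(s)$, $w(s)$ bookkeeping with $b_1=\cdots=b_n=1$ is identical to the characteristic-$p$ computation in \cite[Thm~4.5]{byott:A-scaffold}. The only cosmetic difference is which clause of Thm~3.1 you name (you use (iv), the paper uses (ii)); the underlying numerical criterion and the conclusion are the same.
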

\begin{proof}
The condition $v_0(p)\geq 3$ ensures that 
$K_n/K_0$ has a Galois scaffold of precision $\euc
\geq 2p^n-1$. We can therefore apply \cite[Thm 3.1(ii)]{byott:A-scaffold}. The condition on $h$ then follows as in
\cite[Thm 4.5]{byott:A-scaffold}. 
\end{proof}

\subsection{Abrashkin's elementary extensions}
Let $K_0$ be a local field of characteristic $0$ with perfect residue
field of characteristic $p$ containing the field of $p^n$
elements. Following \cite[III\S2 Ex.4]{fesenko}, we define $K_n$ to be
an elementary extension of $K_0$ if $K_n=K_0(x)$ where
$x^{p^n}-x=\tau$ with $\tau \in K_0$ and $v_0(\tau) >
-p^nv_0(p)/(p^n-1)$. 

In the case that $K_0$ is unramified over
$\bQ_p$, the elementary extensions were introduced by V.~Abrashkin
\cite{abrashkin}, who used them in his proof that there are no abelian
schemes over $\bZ$.

We set $u=-v_0(\tau)$, and make the further assumptions that $u>0$,
$\gcd(u,p)=1$. Then $K_n/K_0$ is a totally ramified elementary abelian
extension of degree $p^n$, with unique ramification break $u$.
Furthermore $K_n/K_0$ has a Galois scaffold, directly generalizing
Theorem \ref{C_p}, which concerns the case $n=1$.

\begin{proposition}
If $K_n/K_0$ is an elementary abelian extension as above, then
$K_n/K_0$ has a Galois scaffold of precision $\euc = p^n v_0(p) -
(p^n-1)u \geq 1$.
\end{proposition}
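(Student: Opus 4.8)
The plan is to reduce the statement to Theorem~\ref{elem-abel} when $n>1$ and to Theorem~\ref{C_p} when $n=1$, by exhibiting an Artin--Schreier presentation of $K_n/K_0$ of the kind described in \S\ref{elem-ab-main} in which \emph{every} error term vanishes. The setup already tells us that $K_n/K_0$ is totally ramified elementary abelian of degree $p^n$ with a single ramification break $u$; hence, for any composition series refining the (trivial) ramification filtration, $b_i=u_i=u$ for all $i$. Since $p\nmid u$, this gives Assumptions~\ref{ass1}, \ref{ass2} and \ref{elem-ab-1} at once, and the intrinsic quantities of \S\ref{elem-ab-main} become $C_i=u-u/p^i$, so $C_n=(p^n-1)u/p^n$ and the ramification multiset is $\{u,\dots,u\}$.

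For the generators I would use the hypothesis $\bF_{p^n}\subseteq K_0$. Identify $G=\Gal(K_n/K_0)$ with $\bF_{p^n}$ via $\sigma_c\colon x\mapsto x+c$. Fix a complete flag of $\bF_p$-subspaces $G=H_0\supset\cdots\supset H_n=\{0\}$, choose $c_i\in H_{i-1}\setminus H_i$ so that $\{c_i,\dots,c_n\}$ spans $H_{i-1}$, set $\sigma_i=\sigma_{c_i}$, and let $\phi_1,\dots,\phi_n$ be the dual basis of $\bF_p$-functionals, written $\phi_i(c)=\Tr_{\bF_{p^n}/\bF_p}(\theta_i c)$ with $\theta_i\in\bF_{p^n}^{\times}$. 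Put $x_i=\sum_{j=0}^{n-1}(\theta_i x)^{p^j}$. Using $\theta_i^{p^n}=\theta_i$, a short computation gives $\wp(x_i)=\theta_i\tau\in K_0$ and $\sigma_c(x_i)=x_i+\phi_i(c)$; in particular $(\sigma_i-1)x_i=1$, $x_i$ is fixed by $H_i$ so $x_i\in K_i$, and $\bigcap_i\ker\phi_i=0$ forces $K_n=K_0(x_1,\dots,x_n)$. Since $v_0(\theta_i\tau)=-u$ with $p\nmid u$, standard Artin--Schreier theory (as in the proof of Theorem~\ref{C_p}) gives that $K_0(x_i)/K_0$ has degree $p$ and break $u$ with $v_{(i)}(x_i)=-u$, so we have a legitimate presentation with $\alpha_i=\theta_i\tau$. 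Now set $\beta=\alpha_1=\theta_1\tau$. Because $c\mapsto c^{p^{n-1}}$ is an automorphism of $\bF_{p^n}$, the unique $\omega_i\in\bF_{p^n}^{\times}$ with $\omega_i^{p^{n-1}}=\theta_i/\theta_1$ (namely $\omega_i=(\theta_i/\theta_1)^{p}$) satisfies $\omega_i^{p^{n-1}}\beta=\alpha_i$ exactly; thus $\omega_1=1$, each $\omega_i$ is a unit, so $m_i=0$ (consistent with $b_i=u$), and the error terms $\epsilon_i=\wp(x_i)-\omega_i^{p^{n-1}}\beta$ are all $0$.

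Since $\epsilon_i=0$, Assumption~\ref{ass-epsilon} is vacuous, and Assumption~\ref{strong-ass-p} for a prescribed $\euT\ge1$ reads $v_0(p)\ge C_n+\euT/p^n$, i.e.\ $p^nv_0(p)\ge(p^n-1)u+\euT$. The hypothesis $v_0(\tau)>-p^nv_0(p)/(p^n-1)$ rearranges to $p^nv_0(p)-(p^n-1)u>0$, and being an integer this quantity is $\ge1$; so we may take $\euT=p^nv_0(p)-(p^n-1)u$, and Theorem~\ref{elem-abel} then yields a scaffold of tolerance $\euT$ with shift parameters $u,\dots,u$ when $n>1$. For $n=1$ the same $\euT$ and the same positivity are precisely the conclusion of Theorem~\ref{C_p}. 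The only step that needs genuine care is the construction of the $x_i$: the crux is recognizing that $\bF_{p^n}\subseteq K_0$ not only splits $x$ into degree-$p$ Artin--Schreier pieces but also lets the approximating $p^{n-1}$-th powers $\omega_i$ be taken inside $\bF_{p^n}$, killing every error term and trivializing Assumption~\ref{ass-epsilon}; after that the result is the one-line rearrangement above, with the rest being bookkeeping with the flag and the dual basis.
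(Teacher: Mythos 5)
Your overall strategy matches the paper's: exhibit Artin--Schreier generators $x_i\in K_n$ with $\wp(x_i)$ a unit multiple of $\tau$, observe that all error terms $\epsilon_i$ vanish so Assumption~\ref{ass-epsilon} is automatic, check Assumptions~\ref{elem-ab-1} and \ref{strong-ass-p} using $b_i=u_i=u$, and invoke Theorem~\ref{elem-abel} (or Theorem~\ref{C_p} when $n=1$). However, the construction of the $x_i$ is where there is a genuine gap. You write that ``a short computation gives $\wp(x_i)=\theta_i\tau$'' and $\sigma_c(x_i)=x_i+\phi_i(c)$ for $x_i=\sum_{j=0}^{n-1}(\theta_i x)^{p^j}$, but both of these computations silently use the freshman's-dream identity $(a+b)^p=a^p+b^p$, which fails in characteristic $0$. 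This proposition is precisely about characteristic $0$ ($K_0$ is a local field of characteristic $0$ whose residue field contains $\bF_{p^n}$), so the naive $\sum_{j}(\theta_i x)^{p^j}$ is only an \emph{approximate} Artin--Schreier root, with an error coming from the binomial cross-terms divisible by $p$.

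The paper's proof handles exactly this point: it forms the polynomial
\[
  f(y)=\left(\sum_{r=0}^{n-1}(\omega_i x)^{p^r}+y\right)^p
       -\left(\sum_{r=0}^{n-1}(\omega_i x)^{p^r}+y\right)-\omega_i\tau ,
\]
uses the bound on $v_0(\tau)$ to show $f(y)\equiv y^p-y\bmod\euP_n$, and applies Hensel's Lemma to get $p$ exact roots in $K_n$, so that an honest $x_i$ with $\wp(x_i)=\omega_i\tau$ does exist inside $K_n$. This Hensel step is what makes $\epsilon_i=0$ achievable in characteristic $0$; without it, the claims that each $x_i$ is fixed by $H_i$ (hence lies in $K_i$) and that $\alpha_i=\theta_i\tau$ exactly are unjustified. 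Your dual-basis/flag bookkeeping and the choice $\omega_i=(\theta_i/\theta_1)^p$ are fine as far as they go (and are essentially a reparametrization of the paper's choice of Teichm\"uller lifts of an $\bF_p$-basis of $\bF_{p^n}$), but you need to insert the Hensel perturbation, together with the observation that the perturbation does not disturb the valuation $v_{(i)}(x_i)=-u$, before the rest of the argument can proceed. Once that is supplied, the remaining verifications (the one-line rearrangement giving $\euT=p^nv_0(p)-(p^n-1)u\ge1$ from the hypothesis on $v_0(\tau)$) agree with the paper.
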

\begin{proof}
Let $\mathbb{F}_p$ be the finite field with $p$ elements and
$\mathbb{F}_{p^n}$ the finite field with $p^n$ elements contained in
the residue field $\euO_0/\euP_0$.  Let $\omega_1=1, \omega_2, \ldots ,
\omega_n\in\euO_0$ be Teichm\"{u}ller representatives for an
$\mathbb{F}_p$-basis of $\mathbb{F}_{p^n}$.  So
$\omega_i^{p^n}=\omega_i$.  We prove that $K_n=K_0(x_1,\ldots, x_n)$
where $x_i^p-x_i=\omega_i\tau$.
But this follows if we prove that for each $i$, $z^p-z= \omega_i\tau$ has
$p$ solutions in $K_n$. Consider the polynomial
$$f(y)=\left (\sum_{r=0}^{n-1}(\omega_ix)^{p^r}+y\right )^p-\left
(\sum_{r=0}^{n-1}(\omega_ix)^{p^r}+y\right )-\omega_i\tau\in K_n[y].$$
The bound on $t$ means that $f(y)\equiv y^p-y\bmod \euP_n$, so
$f(y)$ has $p$ roots in $K_n$ by Hensel's Lemma.

We have ramification numbers $b_i=u_i=u$ for all $i$. As
$\gcd(u,p)=1$, Assumption \ref{elem-ab-1} is satisfied, and Assumption
\ref{ass-epsilon} is satisfied since $\epsilon_i=0$ for all
$i$. Finally, Assumption 6 with precision $\euc$ is equivalent to
$p^nv_0(p) \geq (p^n-1)u + \euc$. This holds for $\euc = p^n v_0(p) -
(p^n-1)u$, and then $\euc \geq 1$ by the condition on $v_0(\tau)$. 
\end{proof}

\begin{corollary} \label{abrashkin-cor} 
Let $K_n/K_0$ be an elementary extension as above with $u>0$,
$\gcd(u,p)=1$, and suppose that $u$ satisfies the slightly more
restrictive condition
$$ u < \frac{p^n v_0(p)}{p^n-1} - 2. $$
Then the freeness or otherwise of any fractional ideal $\euP_n^h$ of
$\euO_n$ is determined by the numerical data $d(s)$, $w(s)$ as in 
\cite[Thm 3.1]{byott:A-scaffold}.
In particular, $\euO_n$ itself is free over its associated
order $\euA_0=\euA_{L/K}$ if the least non-negative residue $b$ of $u$ mod $p^n$ divides
$p^m-1$ for some $m \leq n$. For $n=2$, $\euO_n$ is free over $\euA_0$
if and only if $b$ divides $p^2-1$.   
\end{corollary}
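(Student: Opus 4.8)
The plan is to combine the Galois scaffold of tolerance $\euT = p^n v_0(p) - (p^n-1)u$ furnished by the preceding proposition with \cite[Thm 3.1]{byott:A-scaffold}. First I would translate the hypothesis on $u$ into a lower bound on the tolerance: multiplying $u < p^n v_0(p)/(p^n-1) - 2$ through by $p^n-1$ gives $p^n v_0(p) - (p^n-1)u > 2(p^n-1)$, and as $\euT$ is an integer this reads $\euT \geq 2p^n-1$. Exactly as in the proof of Proposition \ref{weakly-struct0}, this is the bound (namely $\euT \geq p^n+(b-h)$, after normalising the exponent modulo $p^n$ so that $0 \leq b-h < p^n$) under which the sharp part of \cite[Thm 3.1]{byott:A-scaffold} applies to every ideal $\euP_n^h$. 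That theorem then reduces the question of whether $\euP_n^h$ is free over its associated order to the numerical equality $d(s)=w(s)$ for all $s \in \Spn$, and simultaneously identifies the associated order and a generator; here the integers $d(s)$, $w(s)$ depend only on $n$, on $h \bmod p^n$, and on the common shift parameter of the scaffold (all $b_i$ equal $u$, so effectively on its residue $b$). This is the first assertion.

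For the remaining assertions I would specialise to $h=0$, i.e.\ to the valuation ring $\euO_n = \euP_n^0$. The key observation is that the functions $d(\cdot)$ and $w(\cdot)$ attached to a scaffold whose shift parameters are all congruent to $b$ modulo $p^n$ are purely combinatorial in nature and are insensitive to the characteristic of $K_0$; hence they agree with the quantities already computed in \cite{byott:A-scaffold} for the ``one-dimensional'' elementary abelian extensions in characteristic $p$, of which Abrashkin's elementary extensions are the mixed-characteristic counterparts. I would therefore invoke that computation: for $h=0$ one has $d(s)=w(s)$ for every $s \in \Spn$ whenever $b \mid p^m-1$ for some $1 \leq m \leq n$, so $\euO_n$ is free over $\euA_0$ by the previous paragraph; and when $n=2$ the same computation yields the converse (if $b \nmid p^2-1$ then $d(s) \neq w(s)$ for at least one $s$), giving the stated equivalence.

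The genuinely delicate step is the digit bookkeeping that evaluates $d(s)$ and $w(s)$ in the equal-shift case: one has to track, for each $s \in \Spn$, the base-$p$ digits of $\eua(s) \equiv -sb^{-1} \bmod p^n$ and the carry patterns produced when the scaffold operators $\Psi^{(s)}$ are composed, and show that the obstruction $d(s)-w(s)$ vanishes exactly under the stated divisibility --- the condition $b \mid p^m-1$ entering through the way multiplication by $p$ on $\bZ/p^n\bZ$ shuffles the digit strings of the $\eua(s)$, controlled by the order of $p$ modulo $b$. For $n=2$ this is short enough to check directly, which is why the equivalence is claimed there; for $n>2$ only the sufficiency is asserted, matching the statement. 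All the input about the extension --- that $K_n/K_0$ is totally ramified and elementary abelian of degree $p^n$ with the single ramification break $u$, and that it carries the asserted scaffold --- is already in the preceding proposition, so nothing further about the extension itself needs to be proved.
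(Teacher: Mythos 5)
Your proposal matches the paper's argument: translate the hypothesis on $u$ into the tolerance bound $\euT \geq 2p^n-1$ so that all parts of \cite[Thm 3.1]{byott:A-scaffold} apply, then read off the freeness criterion for $\euO_n$ from the purely numerical computation of $d(s)$ and $w(s)$, which depends only on the common shift parameter $b$. The only cosmetic difference is that the paper cites \cite[Cor 3.3]{byott:A-scaffold} outright for the divisibility criterion (and its converse when $n=2$), whereas you gesture at the underlying digit-counting argument without naming that corollary; since you correctly observe that this computation is characteristic-independent and already carried out in \cite{byott:A-scaffold}, the substance is the same.
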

\begin{proof}
The condition on $u$ ensures that there is a Galois scaffold of
precision $\euc \geq 2p^n-1$, so that all parts of \cite[Thm
  3.1]{byott:A-scaffold} apply. The statements about $\euO_n$ follow
from \cite[Cor 3.3]{byott:A-scaffold}.
\end{proof}

\begin{remark}
The corresponding extensions in characteristic $p$ have a Galois
scaffold of precision $\infty$ \cite[Lemma 5.2]{elder:scaffold}. Hence
the conclusions of Corollary \ref{abrashkin-cor} hold for these
extensions as well.
\end{remark}

\section{New directions: Constructing Hopf orders in $K[G]$}\label{Hopf-orders} 
The purpose of this section is to illustrate how the results of this
paper can be used to construct commutative and cocommutative Hopf
algebras over valuation rings of local fields and thus shed light on
the construction of finite abelian group schemes over such rings.  Our
purpose here is not to be exhaustive, but simply to
illustrate the utility of a Galois scaffold outside of the Galois
module structure of ideals in local field extensions.

\subsection{Background}
Let $K$ be a local field of residue characteristic $p>0$, and let $G$
be a finite abelian $p$-group. There are not many results that
classify Hopf algebras defined over $\euO_K$ within $K[G]$ ({\em
  i.e.}~Hopf orders). If $G$ has order $p$, the classification of Hopf
orders follows from work of Tate and Oort on group schemes of rank $p$
\cite{tate:oort}. If $\chr(K)=0$ and $K$ contains the $p$th roots of
unity, Greither classified a family of Hopf orders for $G\cong
C_{p^2}$ \cite{greither:MZ}. Under the same assumptions, the
first-named author classified all Hopf orders for $G\cong C_p\times
C_p$, and assuming that $K$ contains the $p^2$th roots of unity, all
Hopf orders for $G\cong C_{p^2}$ \cite{byott:cleft1,
  byott:cleft2}. More recently, Tossici classified all Hopf orders for
$G$ of order $p^2$ without any restriction on the field $K$
\cite{tossici:p^2}. Motivated by the construction in this section,
specifically Example \ref{2}, the classification for $G\cong C_p\times
C_p$ and $\chr(K)=p$ was reproven using Greither's approach
\cite{elder:underwood}.  For $G$ of order $p^n$, $n\geq 3$ there are
families of Hopf orders for $G$ cyclic
\cite{underwood:96,childs:under03,childs:under04,underwood:childs} and
for $G$ elementary abelian
\cite{childs:sauerberg,greither:childs,childs:elem-abel}. For $n=3$,
these families are known to be incomplete. More recently, M\'{e}zard,
Romagny and Tossici produce a family of Hopf orders for $G$ cyclic
that they conjecture is complete \cite{tossici:p^n}. Still for $p=3$,
the conjecture is unproven.  Additional methods are needed.

\subsection{A method for constructing Hopf orders}\label{method}
Let $K$ be a local field with residue characteristic $p>0$. In
particular, $K$ can have characteristic $0$ or $p$.  Let $L/K$ be a
totally ramified extension of degree $p^n$, $n>1$ with abelian Galois
group $G$, and let the lower ramification numbers $b_i$ satisfy
$b_i\equiv -1\bmod p^n$ (thus Assumptions \ref{ass1} and \ref{ass2}
hold with $b=p^n-1$). Assume now that it is possible to make Choices
\ref{choice1} and \ref{choice2} so that Assumption \ref{ass3} holds
with $\euc\geq p^n-1$. Under these circumstances, Theorem \ref{main}
states that a scaffold exists for the action of $K[G]$ on $L/K$. As
explained in \cite[Remark 3.5]{byott:A-scaffold}, it then follows from  
\cite[Theorem 3.1]{byott:A-scaffold} that the ring of integers
$\euO_L$ is free over its associated order $\euA_{L/K}$, and also that 
this associated order takes a particularly simple form:
\begin{equation}\label{hopf-order}
\euA_{L/K}=\euO_K\left[\frac{\Theta_n-1}{\pi_K^{M_n}}, \ldots
  ,\frac{\Theta_2-1}{\pi_K^{M_2}}, \frac{\Theta_1-1}{\pi_K^{M_1}} \right],
\end{equation}
for integers $M_i\geq 0$ satisfying $M_i=(b_i+1)/p^i$. Furthermore,
the $\Theta_i$ are as defined in Definition \ref{Theta}, with
\begin{equation}\label{mu-equal}
v_K(\mu_{i,j})=(b_i-b_j)/p^j=p^{i-j}M_i-M_j.
\end{equation}

Now assume the very weak condition that largest lower ramification
number $b_n$ satisfies $b_n-\lfloor b_n/p\rfloor\leq p^{n-1}v_K(p)$,
where $\lfloor\cdot\rfloor$ denotes the greatest integer
function. This condition is empty in characteristic $p$, and because of
the congruences on $b_i$, it eliminates only certain cyclic extensions in
characteristic $0$ \cite[Proposition 3.7]{byott:JNTB}. Under this
assumption, $\euA_{L/K}$ is a local ring (equivalently, $\euO_L$ is
indecomposable as an $\euO_K[G]$-module) \cite[Theorem 3]{vostokov:1},
\cite[Theorem 4]{vostokov:2}. The congruence conditions on the $b_i$
can then be used together with \cite[IV \S2 Proposition
  4]{serre:local} to see that the different satisfies
$\euD_{L/K}=\delta\euO_L$ for some $\delta\in K$. We are now in a position to use a result of Bondarko, and so, since
$\euO_L$ is free over $\euA_{L/K}$, the
associated order $\euA_{L/K}$ is a Hopf order \cite[Thm A, Prop
  3.4.1]{bondarko}.

\begin{remark}
A Hopf order $\euH$ in a group algebra $K[G]$ is said to be
realizable if there is an extension $L/K$ such that $\euO_L$ is an
$\euH$-Hopf Galois extension of $\euO_K$. When $\euH$ is itself a
commutative local ring, this is equivalent to saying that $\euH$ arises as the
associated order of some valuation ring $\euO_L$. Moreover, in this
case, $\euH$ is realizable if and only if its dual Hopf order is
monogenic as an $\euO_K$-algebra \cite{byott:monogenic}. The Hopf
orders $\euA_{L/K}$ we consider here are, by construction, associated
orders of valuation rings, and hence are realizable.  
\end{remark}

\subsection{Hopf orders in $K[C_p^n]$}
In the case that $K$ has characteristic $0$ and $G$ is elementary
abelian, several families of Hopf orders have been described
\cite{childs:sauerberg,greither:childs,childs:elem-abel}.  It is
therefore of interest to describe the elementary abelian Hopf orders
that result from this paper in a little more detail. But, although we
provide some superficial comments regarding our Hopf orders and those
in \cite{childs:elem-abel}, we leave a careful comparison of the
families of Hopf orders for a later paper.

We first construct some elementary abelian extensions. Again, $K$ may be of either characteristic.  Choose an
integer $m_1\geq 1$ and integers $m_i\geq 0$ for $i\geq 2$ so that
\begin{equation}\label{Hopf-vp}
v_K(p)\geq m_1(p^n-1) +\sum_{k=2}^n(p^{n-1}-p^{k-2})m_k.
\end{equation}
No such integers are possible unless $v_K(p)$ is big enough, namely $v_K(p)\geq p^n-1$.  The restriction is vacuous if $K$
has characteristic $p$.  Note that if we can now arrange for our
extension to have upper and lower ramification numbers as in
\S\ref{elem-ab-main} with $b_1=p^nm_1-1$, then \eqref{Hopf-vp} is
equivalent to $v_K(p)\geq u_n-b_n/p^n+\euc/p^n$, Assumption
\ref{strong-ass-p}, with $\euc=p^n-1$.

Choose $\beta\in K$ with $v_K(\beta)=-b_1=1-m_1p^n$. Thus Assumption \ref{elem-ab-1} holds. Choose
elements $\omega_i\in K$ for $i\geq 2$ such that
$v_K(\omega_i)=-\sum_{k=2}^im_i$, and furthermore assume that, whenever
$v_K(\omega_i)=\cdots=v_K(\omega_j)$ with $i<j$, the
projections of $\omega_i, \ldots ,\omega_j$ into
$\omega_i\euO_K/\omega_i\euP_K$ are linearly independent over
$\mathbb{F}_p$, the finite field with $p$ elements.  This last
assumption requires $[\euO_K/\euP_K:\mathbb{F}_p]$ to be strictly
larger than the longest string of consecutive zeros in $(m_2,\ldots,
m_n)$.  Let $L=K(x_1,\ldots,x_n)$ where $\wp(x_1)=\beta$ and
$\wp(x_i)=\omega_i^{p^{n-1}}\beta$ for $2\leq i\leq n$. All this is in
agreement with \S\ref{elem-ab-main}, except that because we have
chosen $\epsilon_i=0$, Assumption \ref{ass-epsilon} is vacuous.

At this point, we have $L/K$, a totally ramified elementary abelian
extension of degree $p^n$ with $\mbox{Gal}(L/K)=\langle\sigma_1,\ldots
,\sigma_n\rangle$ where $(\sigma_i-1)x_j\equiv \delta_{ij}\bmod
\euP_L$. This extension has a Galois scaffold of precision
$\euc=p^n-1$ in characteristic $0$ and $\euc=\infty$ in
characteristic $p$.   And based upon
\S\ref{method}, we have a Hopf order of the form
\eqref{hopf-order}. 

An explicit description of this Hopf order however,
requires that we describe the integers $M_i$, as well as the elements
$\mu_{i,j}\in K$ that are used to define the $\Theta_i$ in Definition
\ref{Theta}.  Since
\begin{equation}\label{M-m}
p^iM_i=b_i+1=p^n(m_1+\sum_{k=2}^ip^{k-2}m_k),
\end{equation} these nonnegative
integers satisfy
\begin{equation}\label{M-inequ}
p^rM_r\leq p^sM_s,
\end{equation}
 for all $1\leq r\leq s\leq n$.  (Note that
$i_j=M_{n-j+1}$ translates our notation into analogous notation, namely {\em valuation
  parameters}, in \cite[p. 491]{childs:elem-abel}.
In their terms, our condition becomes
$p^{s-r}i_r\geq i_s$ for $r\leq s$, which is weaker that their
requirement, $pi_r\geq i_s$ for $r< s$.)  Meanwhile, \eqref{Hopf-vp}, which is vacuous in characteristic $p$, can be rewritten, using
\eqref{M-m}:
\begin{equation}\label{M-bound-p}
\frac{v_K(p)}{p-1}\geq \sum_{i=1}^nM_i.
\end{equation}
(This is stronger than the condition $v_K(p)/(p-1)>i_j$ in
\cite{childs:elem-abel}.) We note that there are also congruence
conditions imposed by \eqref{M-m}, which we ignore, recalling Remark
\ref{improvements}, until we can say which are an artifact of
our scaffold construction, and which are not.

Turning to the $\mu_{i,j}$, superficial comparisons with analogous
expressions in \cite{childs:elem-abel} become impossible.
A detailed translation would greatly expand the scope of the paper.
So
we close with two examples, $n=2,3$, where we include all the details.
\begin{example}\label{2} Let $C_p^2=\langle\sigma_2,\sigma_1\rangle$. Choose any integers 
$M_i\geq 0$ such that $M_1\leq pM_2$ and $p\mid M_1$, and let
  $\mu_{1,2}\in K$ be any element with
  $v_K(\mu_{1,2})=M_1/p-M_2$.  Then
$$\euO_K\left[\frac{\sigma_2-1}{\pi_K^{M_2}},\frac{\sigma_1\sigma_2^{[-\mu_{1,2}]}-1}{\pi_K^{M_1}}\right]$$
  is a Hopf order in $K[C_p^2]$ when $K$ has characteristic $p$. It is
  a Hopf order in $K[C_p^2]$ when $K$ has characteristic $0$ under the
  additional assumption that $v_K(p)/(p-1)\geq M_1+M_2$. Note that the
  valuation of $\mu_{1,2}$ makes $p\mid M_1$ redundant.
\end{example}

\begin{example}\label{3}
Let $C_p^3=\langle\sigma_3,\sigma_2,\sigma_1\rangle$. Choose any
integers $M_i\geq 0$ such that $pM_1\leq p^2M_2\leq p^3M_3$ and
$p^{3-i}\mid M_i$, but $p\mid M_3$.  Let $\mu_{i,j}\in K$ be elements
with $v_K(\mu_{i,j})=p^{i-j}M_i-M_j$ that arise from elements
$\omega_1,\omega_2\in K$ via the matrix $(\mathbf{\Omega})$ defined in
\eqref{matrix-eq}, as explained in the proof of Lemma \ref{assum3}. In
other words,
\begin{equation}\label{intertwining}
\mu_{1,2}=-\omega_2^p,\quad\mu_{2,3}=-\frac{\omega_3^p-\omega_3}{\omega_2^p-\omega_2},\quad\mu_{1,3}=-\frac{\omega_3\omega_2^p-\omega_2\omega_3^p}{\omega_2^p-\omega_2}.
\end{equation}
Then
$$\euO_K\left[\frac{\sigma_3-1}{\pi_K^{M_3}},
  \frac{\sigma_2\sigma_3^{[-\mu_{2,3}]}-1}{\pi_K^{M_2}},
  \frac{\sigma_1\sigma_3^{[-\mu_{1,3}]}(\sigma_2\sigma_3^{[-\mu_{2,3}]})^{[-\mu_{1,2}]}-1}{\pi_K^{M_1}}\right]$$
is a Hopf order in $K[C_p^3]$ when $K$ has characteristic $p$. It is a
Hopf order in $K[C_p^3]$ when $K$ has characteristic $0$ under the
additional assumption that $v_K(p)/(p-1)\geq M_1+M_2+M_3$.  Note that the
valuation of $\mu_{i,j}$ makes $p^{3-i}\mid M_i$, but not $p\mid M_3$,
redundant. Thus we suspect $p\mid M_3$ to be an artifact of our
scaffold construction. We are uncertain as to the the significance of
\eqref{intertwining}.
\end{example}
\begin{remark}
Although we are not yet prepared to carefully analyze the Hopf orders
described in this section in comparison with those that appear in
\cite{childs:sauerberg,greither:childs, childs:elem-abel}, we can say
that the Hopf orders in Example \ref{3} do not appear in
\cite{childs:elem-abel}. The Hopf orders in Example \ref{3} are
realizable, while the Hopf orders with $n=3$ in \cite{childs:elem-abel} are
not. See \cite[Proposition 15]{childs:elem-abel}.
\end{remark}

\bibliography{bib}

\end{document}